\documentclass[11pt]{article}

\usepackage{amsmath,amssymb,amsthm}
\usepackage{hyperref}
\usepackage{enumitem, xcolor}

\DeclareMathOperator{\Irr}{Irr}
\DeclareMathOperator{\cd}{cd}

\DeclareMathOperator{\Sub}{Sub}
\DeclareMathOperator{\Syl}{Syl}
\DeclareMathOperator{\SL}{SL}
\DeclareMathOperator{\PSL}{PSL}
\DeclareMathOperator{\SU}{SU}
\DeclareMathOperator{\PSU}{PSU}
\DeclareMathOperator{\Sz}{Sz}
\DeclareMathOperator{\St}{St}

\newcommand{\bN}{\mathbf{N}}
\newcommand{\bC}{\mathbf{C}}
\newcommand{\bZ}{\mathbf{Z}}

\newcommand{\bO}{\mathbf{O}}
\newcommand{\QQ}{\mathbb{Q}}
\newcommand{\Gal}{\mathrm{Gal}}

\numberwithin{equation}{section}

\newtheorem{theorem}{Theorem}[section]
\newtheorem{conjecture}[theorem]{Conjecture}
\newtheorem*{conjA}{Conjecture~A}
\newtheorem*{conjB}{Conjecture~B}
\newtheorem{lemma}[theorem]{Lemma}
\newtheorem{proposition}[theorem]{Proposition}
\newtheorem{corollary}[theorem]{Corollary}
\newtheorem{remark}[theorem]{Remark}
\newtheorem{question}[theorem]{Question}

\title{Alperin's Main Problem of Block Theory}

\author{Alexander Moret\'o\thanks{The author's research is supported by Ministerio de Ciencia e
Innovaci\'on (Grant PID2022-137612NB-I00 funded by
MCIN/AEI/10.13039/501100011033 and ``ERDF A way of making Europe'').}\\
Department of Mathematics\\
University of Valencia\\
\texttt{alexander.moreto@uv.es}}

\date{}

\begin{document}

\maketitle

\begin{center}
\emph{Dedicated to the memory of Jon Alperin and Carlo Casolo}
\end{center}

\begin{abstract}
This paper proposes a conjectural framework for Alperin's Main Problem of
Block Theory from 1976. The character sets considered here are
defined by nonvanishing at given elements, not only by degree conditions.
From this point of view, McKay's conjecture is usually recovered as a first
degree-level consequence. The guiding idea is that the right local objects governing
character values are not, in general, the sets $\Irr_{p'}(G)$ and the
normalizers of Sylow $p$-subgroups, but rather the sets $\Irr^x(G)$ of
irreducible characters not vanishing at a given element $x$, together with
the subnormalizer subgroup $\Sub_G(x)$. I state the basic conjectures of this theory, propose stronger versions, and
verify the main conjectures in several families, including the simple groups
with TI Sylow $p$-subgroups. I also show how this perspective reorganizes several
classical questions in character theory.
\end{abstract}

\tableofcontents

\section{Introduction}

This paper lies at the intersection of two ideas that have remained
largely unexplored: Alperin's formulation of the main problem of block
theory, and Casolo's notion of subnormalizer. Although Alperin explicitly
singled out the determination of character values in terms of $p$-local
subgroups as the main problem of block theory, later developments largely
focused on other local-global questions, and Problem~A itself (see below) came to seem
too ambitious to be meaningfully approachable. Likewise, Casolo's
subnormalizer, despite its natural group-theoretic significance, has
remained essentially absent from the representation theory of finite groups literature.
The conjectures proposed in this paper suggest that these two ideas are in
fact closely connected.

Recall that in his Santa Cruz article \cite{alp} Alperin wrote:

\begin{quote}
Brauer's work on block theory, stretching over decades, strongly suggests
the following problem as a reasonable choice for the main problem of the
subject.

\medskip

\noindent
Problem A. Give rules which determine the values of the characters of $G$
in terms of the $p$-local subgroups of $G$.
\end{quote}

He then added that McKay's conjecture is a prime example of the kind of
rule one should expect. But, important as it is, the McKay conjecture only
concerns character values at the identity element. The same is true, in
essence, of the usual classical local-global conjectures in character
theory: they deal with degrees, heights, or counting problems, and thus
only see a very small part of the problem Alperin had in mind. By
contrast, Problem~A asks for a local understanding of \emph{all} character
values, especially at $p$-singular elements. In fact, this was the second
time that he singled out this problem as the main problem of block theory
\cite{alp2}. Now, half a century later, my goal in this paper is to propose a conjectural
framework for approaching this problem.

The starting point of the present paper is the observation that the
classical set $\Irr_{p'}(G)$ is often not the right one for this purpose.
Given an element $x\in G$, it is much more natural to consider
$$
\Irr^x(G)=\{\chi\in\Irr(G)\mid \chi(x)\neq 0\}
$$
if we want to study the values of the irreducible characters at $x$.
When $x$ is a $p$-element, this set contains $\Irr_{p'}(G)$, but is
typically much richer. The guiding principle of this paper is that
$\Irr^x(G)$ should replace $\Irr_{p'}(G)$ in the ``correct'' formulation of
many local-global problems.

A first indication comes from the following phenomenon. Let $p$ be a prime
and let $P\in \Syl_p(G)$. A $p$-element $x\in G$ will be called
\emph{picky} if it lies in a unique Sylow $p$-subgroup of $G$. These
elements arise naturally in group theory and turn out to be closely related
to character values. The first conjecture is the following.

\bigskip
\begin{conjA}
\textup{(The picky conjecture).} Let $G$ be a finite group, let $p$ be a
prime, let $P\in\Syl_p(G)$, and let $x\in P$ be a picky element. Then
there exists a bijection
$$
f:\Irr^x(G)\longrightarrow \Irr^x(\bN_G(P))
$$
such that
\begin{enumerate}[label=(\arabic*)]
\item $\chi(1)_p=f(\chi)(1)_p$ for all $\chi\in \Irr^x(G)$;
\item $\mathbb{Q}(\chi(x))=\mathbb{Q}(f(\chi)(x))$ for all
      $\chi\in \Irr^x(G)$.
\end{enumerate}
\end{conjA}

The strong picky conjecture is the version in which Condition~(2) is
replaced by the condition $f(\chi)(x)=\pm\chi(x)$ for all
$\chi\in\Irr^x(G)$.

Thus, for picky elements, the normalizer of a Sylow $p$-subgroup appears
as the local subgroup controlling the relevant character values. This
already goes far beyond the classical McKay framework: it concerns actual
values of characters at nontrivial elements, not merely degrees.

However, picky elements are only the first case of a broader picture.
The second key observation is that the normalizer of a Sylow subgroup is
not always the correct subgroup attached to a $p$-element. Following
Casolo \cite{cas89}, given a subgroup $H\leq G$, I write
$$
S_G(H)=\{\,g\in G\mid H\trianglelefteq\trianglelefteq \langle H,g\rangle\,\}
$$
to denote the subnormalizer subset of $H$. This subset does not need to be a
subgroup, and I put
$$
\Sub_G(H)=\langle S_G(H)\rangle
$$
to denote the \emph{subnormalizer subgroup}.
If $H=\langle x\rangle$ is cyclic, I simply write $\Sub_G(x)$. The point
is that, for a $p$-element $x$, $\Sub_G(x)$ contains $\bN_G(P)$, and it
coincides with $\bN_G(P)$ exactly when $x$ is picky
(see Lemma~\ref{lem:subnormalizer-picky}). This leads to the more general
conjecture.

\bigskip
\begin{conjB}
\textup{(The subnormalizer conjecture).} Let $G$ be a finite group, let $p$
be a prime, and let $x$ be a $p$-element of $G$. Then there exists a
bijection
$$
f:\Irr^x(G)\longrightarrow \Irr^x(\Sub_G(x))
$$
such that
\begin{enumerate}[label=(\arabic*)]
\item $\chi(1)_p=f(\chi)(1)_p$ for all $\chi\in \Irr^x(G)$;
\item $\mathbb{Q}(\chi(x))=\mathbb{Q}(f(\chi)(x))$ for all
      $\chi\in \Irr^x(G)$.
\end{enumerate}
\end{conjB}

The strong subnormalizer conjecture is obtained analogously, replacing the
equality of fields by equality up to sign.

In this form, the picky conjecture becomes a special case of a more
general principle: the ``local" subgroup (not local in the classical sense) that encodes the values of
irreducible characters at a given $p$-element $x$ is not necessarily the
Sylow normalizer, but rather the subnormalizer subgroup $\Sub_G(x)$.

The aim of this paper is to develop this point of view in a systematic
way. I begin with the elementary forms of the picky and subnormalizer
conjectures stated above, since they already capture the main conceptual
shift. I then propose stronger formulations, including versions in terms of
$p$-sections and reduced $p$-sections, which appear to be closer to the true
level of generality of the theory. After that, I discuss how this framework is
related to Evseev's strengthening of Isaacs--Navarro, to Brou\'e-type
phenomena, and to several classical problems in character theory. In
particular, I explain why the passage from $\Irr_{p'}(G)$ to $\Irr^x(G)$
and from $\bN_G(P)$ to $\Sub_G(x)$ drastically changes the meaning and
scope of several familiar conjectures.

In this formulation, several classical local-global conjectures appear as
degree-level consequences of statements about character values at specific
elements. This is why the sets $\Irr^x(G)$ and the subnormalizers $\Sub_G(x)$
lead to stronger, and in some cases more accurate, formulations.

The paper is organized as follows. In Section~2 I introduce the global picky
conjecture, starting from the work \cite{mmm} and from Alperin's
Conjecture~C for TI Sylow subgroups. Section~3 proves the main conjectures
for the simple groups with noncyclic TI Sylow $p$-subgroups using explicit
character-theoretic arguments; this is one of the main results proved in the
present paper. In Section~4 I introduce the subnormalizer conjecture and its
stronger forms in terms of $p$-sections and reduced $p$-sections. Section~5
relates the picky point of view to Evseev's refinement of the McKay conjecture
and to related Brou\'e-type questions. Section~6 compares the picky conjecture
with the Eaton--Moret\'o conjecture. Section~7 considers elements of mixed
order and a Hall $\pi$-version suggested by counterexamples to the
$\pi$-McKay statement. Section~8 discusses further consequences and questions,
including the relation between picky elements, subnormalizers and other
local-global problems in character theory. For more details on the genesis of this article
and its relation with earlier work, I refer the reader to \cite{mor}. 

\section{The global picky conjecture}

The starting point of this theory is the paper \cite{mmm}, where Attila
Mar\'oti, Juan Mart\'inez Madrid, and I considered the following question:
for which finite groups $G$ are all Sylow $p$-subgroups needed in order to
cover the set of $p$-elements of $G$? The key observation there is the
following elementary lemma.

\begin{lemma}\label{lem:mmm-picky}
Let $G$ be a finite group and let $p$ be a prime. Then all Sylow
$p$-subgroups are needed in order to cover the set of $p$-elements of $G$
if and only if there exists a $p$-element $x\in G$ lying in a unique
Sylow $p$-subgroup.
\end{lemma}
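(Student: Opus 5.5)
The plan is to prove both implications directly from Sylow's theorem. The key observation is that every $p$-element lies in at least one Sylow $p$-subgroup. So the statement ``all Sylow $p$-subgroups are needed'' can be unpacked as follows: for every $P\in\Syl_p(G)$, the union $\bigcup_{Q\in\Syl_p(G),\,Q\neq P}Q$ fails to contain all $p$-elements of $G$.

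For the direction ``$\Leftarrow$'', suppose $x$ is a $p$-element lying in a unique Sylow $p$-subgroup $P_0$. First, for every $g\in G$, the conjugate $x^g$ lies in $P_0^g$. It also lies in no other Sylow $p$-subgroup: if $x^g\in Q$ with $Q \in \Syl_p(G)$, then $x\in Q^{g^{-1}}$, which forces $Q^{g^{-1}}=P_0$, that is, $Q=P_0^g$. Second, by Sylow's theorem every $P\in\Syl_p(G)$ has the form $P_0^g$ for some $g$. Hence each $P$ contains an element, namely $x^g$, lying in no other Sylow $p$-subgroup. Omitting $P$ therefore leaves $x^g$ uncovered, so every Sylow $p$-subgroup is needed.

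For the direction ``$\Rightarrow$'', assume all Sylow $p$-subgroups are needed. Fix any $P\in\Syl_p(G)$. Since $P$ cannot be omitted, some $p$-element $x$ is not in $\bigcup_{Q\neq P}Q$. But $x$ lies in some Sylow $p$-subgroup, so that subgroup must be $P$. Thus $x$ lies in $P$ and in no other Sylow $p$-subgroup, i.e.\ $x$ is picky.

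One should fix the interpretation of ``needed'' as ``no proper subfamily of $\Syl_p(G)$ covers the $p$-elements''. This is equivalent to the one-at-a-time formulation used above, since removing more subgroups only shrinks the union. There is no real obstacle here. The only step requiring care is the conjugation argument in the ``$\Leftarrow$'' direction, which transports the uniqueness property from one Sylow subgroup to all of them.
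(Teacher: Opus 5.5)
Your proof is correct. The paper gives no argument of its own here and simply cites \cite[Lemma~2.1]{mmm}; your direct Sylow argument is the standard elementary proof of that fact: conjugation carries the uniqueness property from $P_0$ to every $P_0^g$, and an element left uncovered when an indispensable $P$ is omitted must be picky. You also correctly settle the meaning of ``needed'' by noting that it can be tested one subgroup at a time, since removing further subgroups only shrinks the union.
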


\begin{proof}
This is \cite[Lemma~2.1]{mmm}.
\end{proof}

This led to the following notion. A $p$-element $x\in G$ is called
\emph{picky} if it lies in a unique Sylow $p$-subgroup of $G$.
For instance, if $G$ has a cyclic Sylow $p$-subgroup $P$, then the generators of $P$ are picky. 

It turns out that picky elements were considered by J. G. Thompson over half a century ago. He conjectured that if $G$ is $p$-solvable and $A$ is a $p$-subgroup of $G$ that is contained in a unique Sylow $p$-subgroup of $G$, then the $p$-length of $G$ is bounded in terms of $|A|$. This was proved by A. Rae \cite{rae} when $A$ is cyclic (i.e., when $A$ is generated by a picky element) using techniques of E. C. Dade and B. Hartley \cite{dad4, har}. He also proved it for arbitrary $A$ when $p$ is odd. As far as I am aware, the conjecture remains open for $p=2$ and $A$ noncyclic. As a consequence of Theorem~B of \cite{rae}, we have that picky elements in $p$-solvable groups have ``large" order. We have observed the same phenomenon in arbitrary finite groups, but it would be interesting to make this precise.

\begin{theorem}[Rae]\label{thm:rae-picky-order}
Let $G$ be a $p$-solvable group of $p$-length $k$. Let $x\in G$ be a picky $p$-element. Then $o(x)\geq p^{k/2}$.
\end{theorem}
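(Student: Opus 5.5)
The inequality should be a direct translation of Rae's bound on the $p$-length, Theorem~B of \cite{rae}, into a statement about the order of $x$. Write $o(x)=p^n$ and put $A=\langle x\rangle$. Since $x$ is picky, $A$ is a cyclic $p$-subgroup of $G$ contained in a unique Sylow $p$-subgroup $P$. These are exactly the hypotheses of Thompson's question in the case Rae settled.

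The first step is to recall Rae's result in the form I expect it to have. For a $p$-solvable group $G$ and a cyclic $p$-subgroup $A$ of order $p^n$ lying in a unique Sylow $p$-subgroup, the $p$-length satisfies $l_p(G)\le 2n$. If the published statement is phrased differently, for example as $l_p(G)\le 2\log_p|A|$ or with $A$ acting on a $p$-solvable group of $p'$-order, I would check that it specializes to this linear bound. I would also check that the uniqueness hypothesis is inherited by the sections Rae works with. The relevant observation is that if $A$ lies in a unique Sylow $p$-subgroup of $G$, then $A\bO_{p'}(G)/\bO_{p'}(G)$ lies in a unique Sylow $p$-subgroup of $G/\bO_{p'}(G)$. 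This holds because Sylow $p$-subgroups of the quotient lift to Sylow $p$-subgroups of $G$, and a Sylow $p$-subgroup of $G$ containing a conjugate $A^c$ with $c\in\bO_{p'}(G)$ can be pulled back to one containing $A$.

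Granting the bound, the conclusion is immediate. We have $k=l_p(G)\le 2n$, so $n\ge k/2$, and therefore
$$o(x)=p^n\ge p^{k/2}.$$

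The only real content is Rae's theorem. The hard part of that theorem is the Hall--Higman/Dade--Hartley analysis: one must show that each pair of consecutive steps $\bO_{p',p}$ in the $p$-series forces a proper drop in the order of the acting cyclic group, so that the $p$-length grows at most linearly in $n$. The picky hypothesis is used in that analysis to rule out fixed points, since a nontrivial $p'$-element centralizing $x$ and normalizing $P$ in the wrong way would produce a second Sylow $p$-subgroup containing $x$. I do not intend to reprove this, only to cite \cite{rae}. The one point I would verify carefully is that the constant in Rae's bound is indeed $2$, which gives the exponent $k/2$.
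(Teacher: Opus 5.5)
Your proposal takes the same approach as the paper: the paper gives no argument beyond attributing the bound to Theorem~B of \cite{rae} and reading off $k\le 2\log_p o(x)$, and your reduction modulo $\bO_{p'}(G)$ and your sketch of Rae's method are not needed for that citation. One caveat, which applies equally to the paper's statement: the bound requires $x\neq 1$ (in $G=C_p$ the identity lies in a unique Sylow $p$-subgroup, yet $k=1$), so the form $l_p(G)\le 2n$ you quote for $|A|=p^n$ only makes sense with $n\geq 1$.
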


As pointed out in \cite{rae}, it might be possible to improve this to $o(x)\geq p^k$.

Another property of picky elements is that they tend to be ``at the top" of the group. For instance, it is not difficult to prove that if $G$ is $p$-solvable then any picky element lies outside $\bO^{p',p}(G)$.

 For a subset
$\mathcal S\subseteq G$, write
$$
\Irr^{\mathcal S}(G)=\{\chi\in\Irr(G)\mid \chi(s)\ne 0
\text{ for some }s\in\mathcal S\}.
$$
For a single element $x$, we write $\Irr^x(G)=\Irr^{\{x\}}(G)$.

Somewhat unexpectedly, \cite{mmm} shows that most finite groups do contain
picky elements. More importantly for the present paper, we noticed in
\cite[Corollary~2.5]{mmm} that those elements are relevant to character
theory.

\begin{proposition}\label{prop:mmm-characters}
Let $x\in G$ be a picky $p$-element. Then $\chi(x)=0$ for every
$\chi\in\Irr(G)$ lying in a $p$-block whose defect groups are not Sylow
$p$-subgroups of $G$.
\end{proposition}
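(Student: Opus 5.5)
The natural tool is the classical vanishing result for characters in blocks of small defect. If $B$ is a $p$-block of $G$ and $x$ is a $p$-element with $\chi(x)\neq 0$ for some $\chi\in B$, then $x$ lies in some defect group of $B$. This follows from the usual theorem that for $\chi\in\Irr(B)$ and a $p$-element $x$, $\chi(x)=\sum_{\varphi}d^x_{\chi\varphi}\varphi(1)$-type expansions via Brauer's second main theorem. Concretely, $\chi(x)=\sum_{b}\sum_{\varphi\in\mathrm{IBr}(b)}d^x_{\chi\varphi}\varphi(1)$, where $b$ runs over the blocks of $\bC_G(x)$ with $b^G=B$. Such a $b$ exists only if a defect group of $b$, which contains $x$ since $x$ is central in $\bC_G(x)$, is contained in a defect group of $B$. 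This is exactly the statement that if $\chi(x)\neq0$ then $x$ is contained in a defect group of $B$, i.e. $\langle x\rangle\leq_G D$. So the plan has two steps: first invoke this consequence of the second main theorem; then use the picky hypothesis to conclude.

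\textbf{Step 1.} Let $\chi\in\Irr(G)$ lie in a block $B$ with defect group $D$, and suppose $\chi(x)\neq0$. By Brauer's second main theorem, some block $b$ of $\bC_G(x)$ satisfies $b^G=B$ (otherwise all generalized decomposition numbers $d^x_{\chi\varphi}$ vanish and $\chi(x)=0$). Since $x\in\bZ(\bC_G(x))$ is a $p$-element, $x$ lies in every defect group of every block of $\bC_G(x)$, in particular in a defect group $Q$ of $b$. Since $b^G$ is defined, $Q$ is contained in a $G$-conjugate of $D$. Replacing $D$ by a conjugate, we get $x\in D$.

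\textbf{Step 2.} Now suppose $D$ is not a Sylow $p$-subgroup. Here one must be careful: $x\in D\le P'$ for some Sylow $P'$, and $D$ lies in several Sylow subgroups, but that alone does not contradict pickiness of $x$ unless we know $D$ lies in more than one Sylow subgroup containing $x$. The way around this is to use the standard fact that a defect group is a Sylow intersection. By Green's theorem, $D=P_1\cap P_1^g$ for some $P_1\in\Syl_p(G)$ and some $g\in G$; this form holds for every conjugate of $D$, by conjugating. Then $x\in D\le P_1\cap P_1^g$. Since $x$ lies in a unique Sylow subgroup, $P_1=P_1^g$, so $D=P_1$ is Sylow, a contradiction. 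Hence $\chi(x)=0$.

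The main obstacle is Step 2's reliance on the fact that defect groups are intersections of two Sylow $p$-subgroups. I would cite Green's theorem for this, which is usually proved via the block idempotent and the defect class: the defect group is a Sylow $p$-subgroup of $\bC_G(y)$ for a defect class element $y$, hence equals $P\cap P^{g}$ with a suitable $g$. Step 1 is routine block theory (e.g.\ Navarro's book, Corollary 5.9). Alternatively, the whole result may simply be cited as \cite[Corollary~2.5]{mmm}.
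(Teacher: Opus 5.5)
Your argument is correct and is essentially the paper's. Here the paper only cites \cite[Corollary~2.5]{mmm}, but it runs exactly your two steps in the proof of Proposition~\ref{prop:abelian-sylow-picky-values}: Navarro's Corollary~5.9 puts $x$ in a defect group $D$, and then Green's theorem $D=P\cap P^u$ together with pickiness forces $D=P$. One small correction to your Step~2 caveat: if $D\ni x$ lay in two distinct Sylow subgroups, that would already contradict pickiness, so the real issue (which Green's theorem settles) is that a non-Sylow $p$-subgroup need not lie in more than one Sylow subgroup.
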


\begin{proof}
This is a reformulation of \cite[Corollary~2.5]{mmm}.
\end{proof}

Thus picky elements are not merely a group-theoretic curiosity: they impose
strong vanishing restrictions on irreducible characters.

We say that a subgroup $Q$ of $G$ is TI if $Q\cap Q^g$ is either $Q$ or $1$
for every $g\in G$. Alperin's Conjecture~C \cite{alp} asserts that, if
$P\in\Syl_p(G)$ is TI and $P^\#=P\setminus\{1\}$, then
$$
|\Irr^{P^\#}(G)|=|\Irr^{P^\#}(\bN_G(P))|.
$$
If $P$ is TI, then every element of $P^\#$ is picky; conversely, if every
element of $P^\#$ is picky, then $P$ is TI. This suggested to me the following
extension of Alperin's Conjecture~C.

\begin{conjecture}\label{conj:picky-cardinality}
Let $G$ be a finite group, let $p$ be a prime, let $P\in\Syl_p(G)$, and let
$\mathcal P$ be a set of picky $p$-elements contained in $P$. Then
$$
|\Irr^{\mathcal P}(G)|=|\Irr^{\mathcal P}(\bN_G(P))|.
$$
\end{conjecture}

The first GAP computations \cite{gap} after formulating Conjecture~\ref{conj:picky-cardinality}
suggested much stronger relations.

\begin{conjecture}[The global picky conjecture]\label{conj:global-picky}
Let $G$ be a finite group, let $p$ be a prime, let $P\in\Syl_p(G)$, and let
$\mathcal P$ be a set of picky $p$-elements contained in $P$. Then there exists
a bijection
$$
f:\Irr^{\mathcal P}(G)\longrightarrow \Irr^{\mathcal P}(\bN_G(P))
$$
such that for every $\chi\in\Irr^{\mathcal P}(G)$:
\begin{enumerate}[label=(\arabic*)]
\item $\chi(1)_p=f(\chi)(1)_p$;
\item $\mathbb Q(\chi(x))=\mathbb Q(f(\chi)(x))$ for every $x\in\mathcal P$;
\item $f(\Irr^x(G))=\Irr^x(\bN_G(P))$ for every $x\in\mathcal P$.
\end{enumerate}
\end{conjecture}

Thus the global picky conjecture contains, simultaneously, Conjecture~\ref{conj:picky-cardinality},
a strong McKay-type condition, and a compatibility statement for each
individual picky element.

\begin{remark}
Since $\Irr_{p'}(G)\subseteq\Irr^x(G)$ for every $p$-element $x$ by
\cite[Corollary~4.20]{navmc}, the (global) picky conjecture at $p$ may be viewed as a
refinement of McKay's conjecture for groups with picky elements.
\end{remark}

Note that Conjecture~A in the Introduction follows from
Conjecture~\ref{conj:global-picky}. Conjecture~A is the ``one element at a
time'' version of Conjecture~\ref{conj:global-picky}. At present, this is the
form on which most of the work has focused.

For a picky element $x\in P$, we say that the strong picky conjecture holds for
$x$ if the bijection in Conjecture~A can be chosen so that
$$
f(\chi)(x)=\pm\chi(x)
$$
for every $\chi\in\Irr^x(G)$. Similarly, in the setting of
Conjecture~\ref{conj:global-picky}, we say that the strong global picky
conjecture holds if the bijection can be chosen so that
$$
f(\chi)(x)=\pm\chi(x)
$$
for every $\chi\in\Irr^{\mathcal P}(G)$ and every $x\in\mathcal P$.

The stronger forms hold in many important families, but not in full generality.
The paradigmatic counterexamples arise from finite simple groups with noncyclic
TI Sylow $p$-subgroups, as we will see in Section~3.

The strong form of Conjecture~A has been verified in several important cases.
Moret\'o, Navarro and Rizo proved the strong version for $p$-solvable groups
when $p$ is odd \cite{mnr}. Malle established Conjecture~A in several cases for
finite groups of Lie type, including regular unipotent elements and various
families in defining characteristic \cite{mal}. Malle and Schaeffer Fry proved
Conjecture~A for all quasi-simple groups of Lie type in non-defining
characteristic, and obtained the strong version under additional natural
conditions \cite{ms}. Finally, Mart\'inez Madrid established Conjecture~A for
symmetric groups, proving the strong version in that setting \cite{mar}.

One of the few cases where the global picky conjecture has been verified in
full generality is for sporadic groups. This was carried out by Breuer, who
determined the pairs $(G,p)$ and elements $x$ for which the strong version
holds \cite{bre}.

\begin{theorem}[Breuer]\label{thm:breuer-sporadic}
Let $G$ be a sporadic simple group and let $p$ be a prime dividing $|G|$. Then
the strong global picky conjecture holds for $G$, except for the pairs $(G,p)$
$$
(HS,5),\ (J_3,3),\ (McL,5),\ (Co_3,5),\ (Co_2,5),\ (J_4,11).
$$
In these cases, the global picky conjecture holds.
\end{theorem}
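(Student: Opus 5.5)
This statement is a finite verification over the $26$ sporadic groups, so the plan is computational. I would organize it as an algorithm over the ATLAS character tables and table-of-marks or fusion data available in GAP's character table library \cite{gap}.

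\textbf{Step 1: identify the picky classes.} For each pair $(G,p)$ with $p$ dividing $|G|$, fix $P\in\Syl_p(G)$ and determine the $G$-classes of $p$-elements $x$ that are picky. By Lemma~\ref{lem:subnormalizer-picky}, $x\in P$ is picky exactly when $\Sub_G(x)=\bN_G(P)$. A more practical test counts Sylow subgroups: $x$ lies in exactly one Sylow $p$-subgroup if and only if
$$
1_{\bN_G(P)}^G(x)=1,
$$
since the permutation character of $G$ on $\Syl_p(G)$ counts the Sylow subgroups containing $x$. Thus, whenever the table of $\bN_G(P)$ and its class fusion into $G$ are known, picky classes are read off from the induced trivial character.

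\textbf{Step 2: compare the two sides.} For each picky class, and for the set $\mathcal P$ of all picky elements of $P$ up to $\bN_G(P)$-conjugacy, compute $\Irr^{\mathcal P}(G)$ and $\Irr^{\mathcal P}(\bN_G(P))$. The latter uses the fusion of $\bN_G(P)$-classes inside $P$ into $G$-classes. To test for a bijection satisfying (1)--(3) of Conjecture~\ref{conj:global-picky}, or its strong form, I would build a bipartite graph on $\Irr^{\mathcal P}(G)\sqcup\Irr^{\mathcal P}(\bN_G(P))$. An edge joins $\chi$ and $\psi$ when three conditions hold:
\begin{itemize}
\item the $p$-parts of the degrees agree;
\item for every $x\in\mathcal P$, the values vanish simultaneously;
\item for every $x\in\mathcal P$, the fields $\mathbb Q(\chi(x))$ and $\mathbb Q(\psi(x))$ agree, or $\psi(x)=\pm\chi(x)$ in the strong form.
\end{itemize}
A perfect matching exists if and only if the conjecture holds for $(G,p,\mathcal P)$, and Hall's criterion or a standard matching algorithm decides this.

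\textbf{Step 3: isolate the failures.} For the six listed pairs I would confirm that the strong matching fails while the field version succeeds. These cases are expected to be ones where $P$ is noncyclic, for instance $5^{1+2}$ or $11^{1+2}$. There, values at picky elements are Gauss-sum type irrationalities that are Galois conjugate but not equal up to sign, in the spirit of Section~3.

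\textbf{Main obstacle.} The hardest part is obtaining reliable tables of $\bN_G(P)$ with fusions for the large groups, such as $M$, $B$, $Fi_{24}'$ and $J_4$ at small primes. For large $p$, Sylow subgroups are cyclic or TI. Then every nonidentity element of $P$ is picky, and Brauer--Dade theory or Alperin's Conjecture~C data handle these cases.

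For $p=2,3$ in the Monster, the first thing I would try is to avoid computing $\bN_G(P)$ at all. Picky $2$- and $3$-elements should be very rare there, and possibly absent: the order bound in the spirit of Theorem~\ref{thm:rae-picky-order} suggests only elements of maximal order can be picky. So I would first compute $1_{\bN_G(P)}^G$ on the few candidate classes of large $p$-power order, using power maps and centralizer orders. I would then settle any surviving classes through the tables of the maximal $p$-local subgroups that contain $\bN_G(P)$.
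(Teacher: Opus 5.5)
Your overall strategy is the one the paper relies on. The paper gives no argument of its own: it cites Breuer's GAP verification over the character table library, and handles the cyclic Sylow cases theoretically via Rizo's Dade-theoretic extension of Theorem~\ref{thm:sylow-order-p}. Your perfect-matching formulation is the right decision procedure. It is also enough to run it on the full set of picky classes, since by condition (3) a valid bijection restricts to one for every subset $\mathcal P'\subseteq\mathcal P$.

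\textbf{The pruning step for the largest groups.} This step does not hold as written, for two reasons.
\begin{itemize}
\item Theorem~\ref{thm:rae-picky-order} is a statement about $p$-solvable groups and only gives the lower bound $o(x)\ge p^{k/2}$. It therefore gives no license to discard classes of the Monster of non-maximal order.
\item $1_{\bN_G(P)}^G(x)=|\bC_G(x)|\,|x^G\cap P|/|\bN_G(P)|$ cannot be computed from power maps and centralizer orders. It needs $|x^G\cap P|$, which is exactly the fusion data from $P$ that you wanted to avoid.
\end{itemize}
A rigorous filter that really uses only centralizer orders is the following. If $x\in P$ is picky, then $\bC_G(x)$ fixes the unique Sylow $p$-subgroup containing $x$, so $\bC_G(x)\le\bN_G(P)$. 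Hence $\bC_P(x)$ is a normal Sylow $p$-subgroup of $\bC_G(x)$, and $|\bC_G(x)|_{p'}$ divides $|\bN_G(P):P|$. Only the classes passing this test then need local fusion data.

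\textbf{Two side remarks that should not guide the computation.}
\begin{itemize}
\item Conjecture~C is only a count of characters. It supplies no value information for the noncyclic TI cases $(McL,5)$ and $(J_4,11)$, which must be read from explicit tables.
\item The failure mechanism you predict in Step~3 is not what the paper finds. The failures sit on picky classes in $P'\setminus\{1\}$. In Section~3 they come from rational values with no counterpart up to sign, while the irrational values do match. For example, $p'$-degree characters of ${}^2G_2(q)$ take the value $2q-1$ on $X$, whereas those of $\bN_G(P)$ take only $1$ and $q-1$.
\end{itemize}
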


We conclude this section with an important point about fusion of picky elements.

\begin{lemma}\label{lem:picky-fusion}
Let $G$ be a finite group, let $P\in\Syl_p(G)$, and let $x\in P$ be picky. If
$y\in P$ and $x$ and $y$ are conjugate in $G$, then $x$ and $y$ are conjugate in
$\bN_G(P)$.
\end{lemma}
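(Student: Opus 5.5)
Write $y=x^g$ with $g\in G$; the aim is to modify $g$ into an element of $\bN_G(P)$ that still conjugates $x$ to $y$. The argument uses only Sylow's theorem and the fact that pickiness is preserved by conjugation.

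First, since $x\in P$, we have $y=x^g\in P^g$, and $P^g\in\Syl_p(G)$. Also $y\in P$ by hypothesis. Next, pickiness transfers along conjugation: if $y$ lay in two distinct Sylow $p$-subgroups $Q_1\neq Q_2$, then $x=y^{g^{-1}}$ would lie in the distinct Sylow $p$-subgroups $Q_1^{g^{-1}}\neq Q_2^{g^{-1}}$, which is impossible. Hence $y$ is picky too, and $P$ is the unique Sylow $p$-subgroup containing $y$. Since $y$ lies in both $P$ and $P^g$, it follows that $P^g=P$, so $g\in\bN_G(P)$ and $x$, $y$ are already conjugate in $\bN_G(P)$.

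So the argument requires no Frattini-type correction, which would be the usual step in a Burnside-style fusion argument. The only point needing care is the transfer of pickiness from $x$ to $y$; this is immediate because conjugation by $g^{-1}$ permutes $\Syl_p(G)$ bijectively.
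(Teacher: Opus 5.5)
Your proof is correct and is essentially the paper's argument. The only difference is that the paper skips the transfer of pickiness to $y$: it observes directly that $x\in P\cap P^{g^{-1}}$ and applies pickiness of $x$ itself to conclude $P^{g^{-1}}=P$.
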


\begin{proof}
Let $y=x^g$ for some $g\in G$. Then $x\in P$ and $x^g\in P$, so
$x\in P\cap P^{g^{-1}}$. Since $x$ is picky, it belongs to a unique Sylow
$p$-subgroup of $G$. Hence $P=P^{g^{-1}}$, and therefore $g\in\bN_G(P)$. Thus
$x$ and $y$ are conjugate in $\bN_G(P)$.
\end{proof}

This elementary observation is essential for the picky conjectures. It is the
fusion-theoretic reason why, for a picky element $x\in P$, the subgroup
$\bN_G(P)$ is the natural local subgroup to compare with $G$. A first consequence of this lemma is the following.

\begin{corollary}\label{cor:ratreal}
Let $G$ be a finite group, let $P\in\Syl_p(G)$, and let $x\in P$ be picky. Then $x$ is rational (respectively real) as an element of $G$ if and only if $x$ is rational (resp. real) as an element of $\bN_G(P)$.
\end{corollary}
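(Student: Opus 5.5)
The plan is to apply Lemma~\ref{lem:picky-fusion} to elements of $P$ that are $G$-conjugate to $x$, after setting up the standard definitions. An element $x$ of a group $H$ is \emph{rational} in $H$ if $x^k$ is $H$-conjugate to $x$ for every integer $k$ coprime to $o(x)$. It is \emph{real} in $H$ if $x^{-1}$ is $H$-conjugate to $x$. The order of $x$ is the same in $G$ and in $\bN_G(P)$, so both conditions quantify over the same integers $k$. Since $x\in P\le\bN_G(P)$, conjugacy in $\bN_G(P)$ implies conjugacy in $G$. Hence rational (resp.\ real) in $\bN_G(P)$ implies rational (resp.\ real) in $G$, and this direction is immediate.

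For the converse, suppose $x$ is rational in $G$ and let $k$ be coprime to $o(x)$. Then $y=x^k$ is $G$-conjugate to $x$. Moreover $y$ is a power of $x\in P$, so $y\in P$. Lemma~\ref{lem:picky-fusion} now says that $x$ and $y$ are conjugate in $\bN_G(P)$. As $k$ was arbitrary, $x$ is rational in $\bN_G(P)$. The real case is the same argument with $k=-1$: the inverse $x^{-1}$ lies in $P$, so $G$-conjugacy of $x$ and $x^{-1}$ gives $\bN_G(P)$-conjugacy.

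If the paper prefers the character-theoretic definition, I will first translate it. By the standard criterion, $x$ is rational in $H$ if and only if $\chi(x)\in\QQ$ for all $\chi\in\Irr(H)$. Likewise $x$ is real in $H$ if and only if $\chi(x)\in\mathbb{R}$ for all $\chi\in\Irr(H)$. This reduces to the conjugacy formulation, since the Galois action on $\mathbb{Q}(\zeta_{o(x)})$ sends $\chi(x)$ to $\chi(x^k)$ and the columns of the character table separate conjugacy classes. There is no real obstacle here. The only point to check is that $x^k$ (and $x^{-1}$) lies in $P$, so that Lemma~\ref{lem:picky-fusion} applies, and this holds because $P$ is a subgroup.
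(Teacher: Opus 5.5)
Your proof is correct and matches the paper's intended argument: the paper states this corollary without a written proof, as an immediate consequence of Lemma~\ref{lem:picky-fusion} applied to $x$ and its powers $x^k$ (including $x^{-1}$), all of which lie in $P$. Your easy direction via $\bN_G(P)\le G$ and your translation to the character-theoretic definitions of rational and real are both handled correctly.
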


\section{Simple groups with TI Sylow $p$-subgroups}

The following list is \cite[Proposition~1.3]{bm}, which follows from work of
Suzuki when $p=2$ and Gorenstein--Lyons when $p$ is odd.

\begin{theorem}\label{thm:blau-michler-ti}
Let $G$ be a nonabelian simple group with a noncyclic TI Sylow
$p$-subgroup $P$. Then one of the following holds:
\begin{enumerate}
\item $G=\PSL_2(q)$, where $q\geq 4$ and $p$ divides $q$;
\item $G=\PSU_3(q)$, where $q\geq 3$ and $p$ divides $q$;
\item $G={}^2B_2(q)$, where $q=2^{2m+1}>2$ and $p=2$;
\item $G={}^2G_2(q)$, where $q=3^{2m+1}>3$ and $p=3$;
\item $(G,p)$ is one of the groups
$$
        (\PSL_3(4),3),\quad ({}^2F_4(2)',5),\quad
        (M_{11},3),\quad (McL,5),\quad (J_4,11).
$$
\end{enumerate}
\end{theorem}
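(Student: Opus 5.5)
The paper attributes this classification to \cite[Proposition~1.3]{bm}, so in context the proof is essentially a citation. What follows is how I would reconstruct it from the classification of finite simple groups and the two deep inputs the paper names.

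The approach splits on whether $p=2$ or $p$ is odd.

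\emph{Case $p=2$.} If the Sylow $2$-subgroup $P$ is TI and noncyclic, then every involution centralizer $\bC_G(t)$ normalizes $P$. This is because $t\in P\cap P^g$ for every $g\in\bC_G(t)$, and TI then forces $P^g=P$. So $\bN_G(P)$ is strongly embedded in $G$. By Bender's classification of groups with a strongly embedded subgroup, building on Suzuki's work on $(ZT)$- and $CIT$-groups, $G$ is one of $\PSL_2(2^n)$, $\PSU_3(2^n)$ or ${}^2B_2(2^{2m+1})$ with $n\ge2$. This gives items (1)--(3) for $p=2$. The converse, that these groups really have TI Sylow $2$-subgroups, follows from the BN-pair structure: the unipotent radical $U$ of a rank-one group is TI, since distinct Borel subgroups meet in a torus.

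\emph{Case $p$ odd.} I would use the CFSG directly, following the Gorenstein--Lyons treatment of groups with a $p$-local subgroup that is strongly $p$-embedded. As before, $P$ TI implies that $\bN_G(P)$ controls all $p$-local structure and is strongly $p$-embedded. The work then goes family by family.

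For groups of Lie type in defining characteristic $p$, a TI unipotent radical forces Lie rank one. That gives $\PSL_2(q)$, $\PSU_3(q)$ and ${}^2G_2(q)$. In rank at least $2$ the root subgroups of distinct Borels intersect nontrivially, so TI fails.

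For Lie type in cross characteristic, $P$ sits inside a torus normalizer, and noncyclic TI forces very small rank. A case check leaves only $(\PSL_3(4),3)$ and $({}^2F_4(2)',5)$.

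For alternating groups $A_n$ with $n\ge 2p$, two disjoint $p$-cycles give non-TI intersections. When $p\le n<2p$ the Sylow subgroup is cyclic, so no examples arise.

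For sporadic groups, an inspection of $p$-local subgroups in the ATLAS gives $(M_{11},3)$, $(McL,5)$ and $(J_4,11)$.

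The main obstacle is the odd-$p$ classification of strongly $p$-embedded subgroups, which is where the real depth lies. In practice I would not reprove it. I would quote Gorenstein--Lyons--Solomon (Vol.~3, Prop.~7.6.x) for the list of simple groups with a strongly $p$-embedded subgroup. I would then filter that list by the stronger TI condition, discarding the cases where $P$ is cyclic. That filtering is a finite, routine check.
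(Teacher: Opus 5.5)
Your proposal is correct and is essentially the paper's route: the paper only cites Blau--Michler, whose list comes from Suzuki's work for $p=2$ and Gorenstein--Lyons for odd $p$. Your reduction is how that list is obtained: TI makes $\bN_G(P)$ strongly $p$-embedded, then Suzuki/Bender handles $p=2$, and the classification of simple groups with a strongly $p$-embedded subgroup, filtered by TI, handles odd $p$.

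Two minor slips do not affect the conclusion. Your alternating-group claim fails for $(A_6,3)$, whose Sylow $3$-subgroup is TI; this is harmless since $A_6\cong\PSL_2(9)$ is in item~(1). For $p=2$, you need Brauer--Suzuki to discard the generalized quaternion ($m_2(G)=1$) alternative in Bender's theorem.
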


We shall use this list to prove the following result.

\begin{theorem}\label{thm:ti-sylow-main}
Let $G$ be a finite simple group with a noncyclic TI Sylow $p$-subgroup $P$.
Then the global picky conjecture holds for $G$. Moreover, the strong picky
conjecture holds for every element in $P\setminus P'$ and fails for every
element in $P'\setminus\{1\}$.
\end{theorem}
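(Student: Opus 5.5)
The plan is to go case by case through the list in Theorem~\ref{thm:blau-michler-ti}. Since $P$ is TI, every element of $P^\#$ is picky, so the global picky conjecture asks for a single bijection $f:\Irr^{\mathcal P}(G)\to\Irr^{\mathcal P}(\bN_G(P))$ for every subset $\mathcal P\subseteq P^\#$. Lemma~\ref{lem:picky-fusion} says that $G$-fusion in $P$ equals $\bN_G(P)$-fusion. Hence it is enough to work with representatives $x$ of the $\bN_G(P)$-classes in $P^\#$.

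I will construct one bijection $F:\Irr^{P^\#}(G)\to\Irr^{P^\#}(\bN_G(P))$ that satisfies conditions (1)--(3) for every $x\in P^\#$ simultaneously. Its restriction to $\Irr^{\mathcal P}(G)$ then works for any $\mathcal P$, because condition (3) for each $x\in\mathcal P$ gives $F(\Irr^{\mathcal P}(G))=\Irr^{\mathcal P}(\bN_G(P))$.

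The five sporadic-type pairs in item~5 need separate treatment. $(McL,5)$ and $(J_4,11)$ are covered by Theorem~\ref{thm:breuer-sporadic}. $(\PSL_3(4),3)$, $({}^2F_4(2)',5)$ and $(M_{11},3)$ are a finite check from the ATLAS character tables and the tables of $\bN_G(P)$. In that check I verify directly both the global bijection and the strong/non-strong dichotomy along $P\setminus P'$ and $P'\setminus\{1\}$; note $P'=1$ when $P$ is abelian, as for $\PSL_2(q)$.

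For the four infinite families I use the generic character tables.
\begin{itemize}
\item $\PSL_2(q)$: here $\bN_G(P)=B$ is a Borel subgroup $P\rtimes T$.
\item $\PSU_3(q)$, ${}^2B_2(q)$, ${}^2G_2(q)$: $\bN_G(P)$ is the Borel $P\rtimes H$ with $P$ of class $2$ or $3$.
\end{itemize}
In every case the unipotent classes are few, and the characters not vanishing on $P^\#$ are exactly those of $p'$-degree together with the Steinberg-type and cuspidal characters whose values on $p$-elements are Gauss-sum-like, such as $\frac{1}{2}(-1\pm\sqrt{\epsilon q})$ or $\pm\sqrt{q}$-type values. On the local side, the characters of $P\rtimes H$ are of two kinds: those inflated from $H$, which take value $1$ on $P$, and those induced or extended from nonlinear or nontrivial linear characters of $P$ (using Clifford theory over $P$).

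The matching is as follows.
\begin{itemize}
\item $p'$-degree characters of $G$ go to the linear characters of $H$ via a McKay bijection. For these families the McKay bijection is explicit and preserves values at $p$-elements up to sign.
\item The remaining nonvanishing characters of $G$ go to $\bN_G(P)$-orbits of characters of $P$ with the same $p$-part of degree.
\end{itemize}
Matching values class by class then verifies (2), (3) and the sign statement.

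For $x\in P\setminus P'$, the values on both sides at such outer classes are explicit. They are either rational integers, $\pm1$ up to sign, or Gauss-type sums, and these agree up to sign; this gives strong picky.

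For $x\in P'\setminus\{1\}$ (central in $P$ for $\PSU_3$, ${}^2B_2$, ${}^2G_2$), failure is shown as follows. The local characters nonvanishing at $x$ have degrees and values at $x$ that are values of characters of $Z(P)$ multiplied by nonlinear degrees like $q(q-1)$ or $\sqrt{q/2}(q-1)$. The global ones, by contrast, are Deligne--Lusztig-type values with different absolute values, for example $-q$ versus $q(q-1)$. A single comparison of the multisets of absolute values $|\chi(x)|$ then rules out any bijection with $f(\chi)(x)=\pm\chi(x)$, while the fields still agree, typically both being $\QQ$.

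The main obstacle is the uniform bookkeeping for $\PSU_3(q)$ and the Ree groups ${}^2G_2(q)$. There, $P$ has several $\bN_G(P)$-classes of elements outside $P'$, involving $\gcd(3,q+1)$ and the $3$-class of ${}^2G_2$ of order $9$. Each class must be shown to receive the same image set in (3), and the fields of values ($\QQ(\sqrt{-3})$, $\QQ(\sqrt{-q})$, etc.) must agree on both sides. I would first tabulate the local character values from Clifford theory, and only then read off the global values from the known generic tables (Simpson--Frame for $\PSU_3$, Ward for ${}^2G_2$, Suzuki for ${}^2B_2$) to define the bijection explicitly.
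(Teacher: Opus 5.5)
\textbf{Verdict: the proposal has genuine gaps.} Your skeleton is the paper's: run through the list in Theorem~\ref{thm:blau-michler-ti}, build one bijection on $\Irr^{P^\#}(G)$ from explicit tables of $G$ and of $\bN_G(P)$, and check (1)--(3) class by class. But the matching and the failure argument you describe do not work as stated.

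\textbf{The matching.} The $p'$-degree characters of $G$ cannot all go to characters inflated from the torus complement. $\bN_G(P)$ also has nonlinear $p'$-degree characters lying over nontrivial characters of $P/P'$:
\begin{itemize}
\item the two characters of degree $(q-1)/2$ for $\PSL_2(q)$ with $q$ odd;
\item $\varphi_3$, of degree $q-1$, for ${}^2B_2(q)$;
\item the $d$ characters of degree $(q^2-1)/d$ for $\PSU_3(q)$;
\item $\rho$ for ${}^2G_2(q)$.
\end{itemize}
By (1), these must be images of $p'$-degree characters of $G$. For instance, the two exceptional characters of $\PSL_2(q)$, with values $(-1\pm\sqrt{\varepsilon q})/2$, must go to them.

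More importantly, your claim that the McKay part preserves values at $p$-elements up to sign is false on $P'\setminus\{1\}$, and this is exactly where the strong form breaks:
\begin{itemize}
\item at the involution $\sigma$ of ${}^2B_2(q)$, the odd-degree values are $1$, $r-1$, $-(r+1)$ in $G$, but $1$, $q-1$ in $\bN_G(P)$;
\item at the bad class of $\PSU_3(q)$, $G$ has the value $-(q-1)$, while the $p'$-degree values in $\bN_G(P)$ are $1$ and $q^2-1$ (or $(q^2-1)/3$);
\item at $X$ in ${}^2G_2(q)$, $G$ has $2q-1$ against $1$, $q-1$.
\end{itemize}
On bad classes you may only claim preservation of fields, which is easy because these values are rational. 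As written, your first bullet contradicts the second half of the theorem.

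\textbf{The failure argument.} Your mechanism for failure is aimed at the wrong characters. A character of $\bN_G(P)$ over a nontrivial $\lambda\in\Irr(\bZ(P))$ is not $\bZ(P)$-homogeneous, because $\bN_G(P)/P$ is transitive on the nontrivial characters of $\bZ(P)$. Its value at $z$ is therefore an orbit sum, not $\lambda(z)$ times the degree. For example, for the characters of degree $q(q-1)$ in $\PSU_3(q)$ the value is $q\sum_{1\neq\mu\in\Irr(P')}\mu(z)=-q$, not $\lambda(z)q(q-1)$. These match the values $-q$ and $q$ of $G$ up to sign. Likewise, in ${}^2B_2(q)$ both sides give $-r/2$ on $\sigma$ for the characters of $2$-part $r/2$. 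So in these families the contrast ``$-q$ versus $q(q-1)$'' does not occur, and the obstruction comes only from the $p'$-degree characters, as above.

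\textbf{The Ree classes $T^{\pm1}$.} Your parenthetical that $P'\setminus\{1\}$ is central is false for ${}^2G_2(q)$. There $P'=\Phi(P)$ has order $q^2$ while $|\bZ(P)|=q$, so the noncentral classes $T^{\pm1}$ lie in $P'\setminus\{1\}$ and need their own failure argument. The paper uses $\eta_i^{\pm}(T)=\pm 3m-1$ against the values $1$, $q-1$ of the $3'$-degree characters of $\bN_G(P)$.

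\textbf{The sporadic pairs.} Theorem~\ref{thm:breuer-sporadic} gives, for $(McL,5)$ and $(J_4,11)$, only the non-strong global conjecture and the failure of the strong \emph{global} version. The class-by-class dichotomy between $P\setminus P'$ and $P'\setminus\{1\}$ still has to be checked separately, as the paper does.
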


Since $P$ is TI, every element of $P^\#$ is picky. Hence, by
Lemma~\ref{lem:picky-fusion}, $\bN_G(P)$ controls $G$-fusion in $P$. In the
proof of Theorem~\ref{thm:ti-sylow-main}, we shall call the $\bN_G(P)$-classes
contained in $P\setminus P'$ good classes, and the nonidentity
$\bN_G(P)$-classes contained in $P'$ bad classes.

The isolated cases $\PSL_3(4)$ with $p=3$, ${}^2F_4(2)'$ with $p=5$, and the
sporadic cases $M_{11}$ with $p=3$, $McL$ with $p=5$, and $J_4$ with $p=11$
of Theorem~\ref{thm:ti-sylow-main} have been checked directly with GAP
\cite{gap}. In the cases $\PSL_3(4)$, ${}^2F_4(2)'$, and $(M_{11},3)$, the
Sylow $p$-subgroup is abelian, so there are no bad classes. It remains to
treat the four infinite families in the list, which is done in the following
subsections. Malle proved in \cite[Proposition~4.4]{mal} the picky conjecture for 
$G={}^2G_2(q)$ and $p=3$ (with some refinements). For completeness, we present 
here a complete proof, using the same techniques.

Theorem~\ref{thm:ti-sylow-main}, together with the limited number of
counterexamples to the strong picky conjecture, suggests the following
conjecture.

\begin{conjecture}
Let $G$ be a simple group, let $p$ be a prime, and let $P\in\Syl_p(G)$. If
$x\in P$ is picky and $x\notin P'$, then the strong picky conjecture holds
for $x$.
\end{conjecture}

This formulation may be related to the role of abelian defect groups in
Brou\'e's conjecture. When $P$ is abelian, the conjecture predicts the strong
picky conjecture for all nonidentity picky elements of $P$.

The exceptions to the strong global picky conjecture found by Breuer are also
consistent with this conjecture. For
$$
        (HS,5),\ (J_3,3),\ (McL,5),\ (Co_3,5),\ (Co_2,5),\ (J_4,11),
$$
I checked the strong picky conjecture one class at a time with GAP
\cite{gap}. In each case, the strong picky conjecture fails only for the
picky classes contained in $P'\setminus\{1\}$, and it holds for all picky
classes contained in $P\setminus P'$.

\subsection{$\PSL_2(q)$}

Let $G=\PSL_2(q)$, where $q=p^n>3$, let $P\in\Syl_p(G)$, and put
$H=\bN_G(P)$.

\begin{lemma}\label{lem:psl2-normalizer}
We have
$$
H=P\rtimes C_{(q-1)/(2,q-1)}.
$$
\end{lemma}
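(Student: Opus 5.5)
The plan is to pass from $\PSL_2(q)$ to $\SL_2(q)$ and compute the Borel subgroup explicitly. Let $\tilde G=\SL_2(q)$ and let $\tilde P$ be the group of upper unitriangular matrices
$$u(a)=\begin{pmatrix}1&a\\0&1\end{pmatrix},\qquad a\in\mathbb F_q .$$
This subgroup has order $q$, so it is a Sylow $p$-subgroup of $\tilde G$, because $|\tilde G|=q(q^2-1)$. The center $Z(\tilde G)=\{\pm I\}$ has order $(2,q-1)$ and meets $\tilde P$ trivially. Hence the image $P$ of $\tilde P$ in $G$ is a Sylow $p$-subgroup, and $P\cong\tilde P\cong(\mathbb F_q,+)$.

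Next I compute $\bN_{\tilde G}(\tilde P)$. The subgroup $\tilde P$ is exactly the set of elements fixing the line $\langle e_1\rangle$ and acting trivially on both $\langle e_1\rangle$ and $\mathbb F_q^2/\langle e_1\rangle$. Since $\tilde P\neq 1$, the line $\langle e_1\rangle$ is the unique line fixed by $\tilde P$: a nontrivial transvection $u(a)$ has $\langle e_1\rangle$ as its only eigenline. Any $g$ normalizing $\tilde P$ permutes the $\tilde P$-fixed lines, so $g$ fixes $\langle e_1\rangle$ and is therefore upper triangular. Conversely, every upper triangular matrix normalizes $\tilde P$, since $\tilde P$ is the unipotent radical of the Borel subgroup. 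Therefore
$$\bN_{\tilde G}(\tilde P)=\tilde B=\tilde P\rtimes \tilde T,\qquad \tilde T=\{\mathrm{diag}(t,t^{-1}) : t\in\mathbb F_q^\times\}\cong C_{q-1},$$
with $\tilde T\cap\tilde P=1$.

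Passing to the quotient, the preimage of $\bN_G(P)$ equals $\bN_{\tilde G}(\tilde P Z)$. Because $Z$ is central and $\tilde P$ is characteristic in $\tilde PZ$ (it is the Sylow $p$-subgroup of the direct product $\tilde P\times Z$), this preimage is $\bN_{\tilde G}(\tilde P)=\tilde B$, which already contains $Z$ since $-I\in\tilde T$. Hence
$$H=\tilde B/Z\cong P\rtimes(\tilde T/Z),$$
and $\tilde T/Z$ is cyclic of order $(q-1)/(2,q-1)$. This gives $H=P\rtimes C_{(q-1)/(2,q-1)}$.

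The only step needing real care is the assertion that normalizers of $\tilde P$ must be upper triangular; the fixed-line argument handles it uniformly for all $q$. As a cross-check, $|G:H|$ should be $q+1$, which is the number of Sylow $p$-subgroups, one for each point of the projective line. This is consistent with $P$ being TI, as in Theorem~\ref{thm:blau-michler-ti}.
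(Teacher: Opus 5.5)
Your proof is correct. It differs from the paper only in that the paper gives no argument at all here: it simply cites Huppert's description of Sylow $p$-normalizers (Hilfssatz~8.21).

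You instead derive the result directly, in three steps:
\begin{enumerate}
\item You lift to $\SL_2(q)$.
\item You identify $\bN_{\SL_2(q)}(\tilde P)$ with the upper triangular Borel subgroup, using the fact that $\langle e_1\rangle$ is the unique line fixed by $\tilde P$.
\item You descend to $\PSL_2(q)$. Since $\tilde P$ is characteristic in $\tilde P\times Z$, the full preimage of $\bN_G(P)$ is exactly $\bN_{\SL_2(q)}(\tilde P)$, and this already contains $Z$.
\end{enumerate}

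The citation is shorter. Your version is self-contained and treats even and odd $q$ uniformly. It also makes explicit the one point a bare citation hides: the normalizer in the quotient is no larger than the image of the Borel subgroup. This is what yields the complement as $\tilde T/Z$, cyclic of order $(q-1)/(2,q-1)$.

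Two minor remarks:
\begin{itemize}
\item You tacitly use that all Sylow $p$-subgroups are conjugate, so it suffices to treat the image of the unitriangular group. This deserves one sentence.
\item Your cross-check $|G:H|=q+1$ is a useful sanity check. It is not needed for the proof.
\end{itemize}
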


\begin{proof}
This follows from the description of Sylow $p$-normalizers in
\cite[Hilfs\-satz~8.21]{hb}.
\end{proof}

\begin{lemma}\label{lem:psl2-even}
Assume that $q$ is even, and let $x\in P^\#$. Then $P^\#$ is one $G$-class and
one $H$-class. Moreover
$$
\Irr^x(G)=\Irr(G)\setminus\{\St\},
\qquad
\Irr^x(H)=\Irr(H),
$$
where $\St$ denotes the Steinberg character of $G$. All characters in these two
sets have odd degree, and their values at $x$ are $\pm1$.
\end{lemma}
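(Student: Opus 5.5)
When $q$ is even we have $G=\PSL_2(q)=\SL_2(q)$, and the plan is to work throughout with the explicit character table of $\SL_2(q)$ from \cite{hb} (or any standard source) together with Lemma~\ref{lem:psl2-normalizer}. Since $(q-1,2)=1$ in this case, that lemma gives $H=P\rtimes C$ with $P$ elementary abelian of order $q$ and $C$ cyclic of order $q-1$. Here $C$ is realised as the diagonal torus acting on $P\cong(\mathbb F_q,+)$ by multiplication by squares. In characteristic $2$ every element of $\mathbb F_q^\times$ is a square, so $C$ acts regularly on $P^\#$.

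The first step is to show that $P^\#$ is a single class. Regularity of the action of $C$ on $P^\#$ makes $P^\#$ one $H$-class. Since $G$-fusion in $P$ is controlled by $H$ (Lemma~\ref{lem:picky-fusion}, as $P$ is TI), $P^\#$ is also a single $G$-class; alternatively one can read this off from the fact that $G$ has a unique class of involutions. In passing I record that $C_G(x)=P$ for $x\in P^\#$.

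Next I would compute $\Irr^x(H)$. The group $H$ is a Frobenius group with kernel $P$ and complement $C$. Its irreducible characters are the $q-1$ linear characters inflated from $C$, which are trivial on $P$ and so take the value $1$ at $x$. The remaining characters are induced from the nontrivial characters of $P$; since $C$ is transitive on the $q-1$ nontrivial characters of $P$, there is exactly one of them, of degree $q-1$. Its value at $x$ is $\sum_{\lambda\ne1}\lambda(x)=-1$. So every character of $H$ is nonzero at $x$, every degree is odd, and every value at $x$ is $\pm1$. As a check, the degrees satisfy $(q-1)+(q-1)^2=q(q-1)=|H|$.

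For $G$ I would go through the character table of $\SL_2(q)$ with $q$ even. It consists of $1_G$ (value $1$ at $x$), the Steinberg character $\St$ of degree $q$ (value $0$ at $x$), the $(q-2)/2$ principal series characters of degree $q+1$ (value $1$ at $x$), and the $q/2$ discrete series characters of degree $q-1$ (value $-1$ at $x$). These values are the only facts needed. The one that needs any justification is $\St(x)=0$: $\St$ vanishes off semisimple elements, or equivalently it has $p$-defect zero, so Proposition~\ref{prop:mmm-characters}, or simply $|C_G(x)|_2<q$, forces $\St(x)=0$. This gives $\Irr^x(G)=\Irr(G)\setminus\{\St\}$, all of odd degree and all taking values $\pm1$ at $x$. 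As a sanity check, $|\Irr^x(G)|=q=|\Irr(H)|$.

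I do not expect a genuine obstacle here. The only step requiring care is quoting the character values of $\SL_2(q)$ at a unipotent element correctly for $q$ even, where there is no splitting of classes, unlike the odd case.
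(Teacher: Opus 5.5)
Your proposal is correct and is essentially the paper's proof: the $G$-side comes from the character table of $\SL_2(q)$ for $q$ even, and the $H$-side from the Frobenius group $P\rtimes C_{q-1}$, with its $q-1$ linear characters and one character of degree $q-1$ taking value $-1$ on $P^\#$. The only difference in route is the class argument: you get a single $H$-class from the regular torus action on $P^\#$ and pass up to $G$ (trivial since $H\leq G$, so Lemma~\ref{lem:picky-fusion} is not needed there), whereas the paper reads the single $G$-class off the table and passes down to $H$ via Lemma~\ref{lem:picky-fusion}. One small slip: the parenthetical ``$|\bC_G(x)|_2<q$'' is false, since you yourself note $\bC_G(x)=P$ has order $q$; drop it and let $\St(x)=0$ rest on $\St$ having $2$-defect zero (or on Proposition~\ref{prop:mmm-characters}), as your other justifications already do.
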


\begin{proof}
Since $q$ is even, $G=\SL_2(q)$ and $H=P\rtimes C_{q-1}$ by
Lemma~\ref{lem:psl2-normalizer}. By \cite[Theorem~38.2]{dor}, $P^\#$ is one
$G$-class, the Steinberg character is the only irreducible character of $G$
which vanishes on this class, and all other irreducible characters have value
$\pm1$ on it and odd degree. By Lemma~\ref{lem:picky-fusion}, $P^\#$ is also one
$H$-class.

The group $H$ is a Frobenius group with elementary abelian kernel $P$ and cyclic
complement $C_{q-1}$. Thus $H$ has $q-1$ linear characters and one nonlinear
irreducible character, induced from any nonprincipal irreducible character of $P$. The
linear characters have value $1$ on $P$, and the nonlinear character has value
$-1$ on $P^\#$. All these degrees are odd.
\end{proof}

Assume now that $q$ is odd. 
We will also use the information on the character table of $\SL_2(q)$ obtained from \cite[Theorem~38.1]{dor}.
Let $x_1,x_2\in P^\#$ be representatives of the two
nonidentity $G$-classes contained in $P$. By Lemma~\ref{lem:picky-fusion}, these
are also representatives of the two nonidentity $H$-classes contained in $P$.
Set
$$
\varepsilon=(-1)^{(q-1)/2},
\qquad
\tau=\sqrt{\varepsilon q},
\qquad
a=\frac{-1+\tau}{2},
\qquad
\bar a=\frac{-1-\tau}{2}.
$$

\begin{lemma}\label{lem:psl2-odd-normalizer-characters}
The irreducible characters of $H$ have the following values on both of the classes
represented by $x_1$ and $x_2$:
$$
\begin{array}{c|c|c}
\text{value} & p\text{-part of degree} & \text{multiplicity}\\ \hline
1 & 1 & (q-1)/2\\
a & 1 & 1\\
\bar a & 1 & 1
\end{array}
$$ 
\end{lemma}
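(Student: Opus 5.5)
The plan is to describe $H$ concretely, list $\Irr(H)$ through Clifford theory over the abelian normal subgroup $P$, and evaluate the nonlinear characters at $x_1,x_2$ using quadratic Gauss sums over $\mathbb{F}_q$. By Lemma~\ref{lem:psl2-normalizer}, $H=P\rtimes C$ with $C\cong C_{(q-1)/2}$. Concretely, $P$ is the image of the upper unitriangular matrices, so $P\cong(\mathbb{F}_q,+)$ is elementary abelian of order $q$. The group $C$ is the image of the diagonal torus, and $\mathrm{diag}(t,t^{-1})$ acts on $P$ as multiplication by $t^2$. Hence $C$ acts on $P$ as the group $S$ of nonzero squares of $\mathbb{F}_q$, acting by multiplication. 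This action is fixed-point-free on $P^\#$, so $H$ is a Frobenius group with kernel $P$. Moreover $P^\#$ splits into exactly two $C$-orbits, $S$ and $\mathbb{F}_q^\times\setminus S$. This matches the two classes represented by $x_1$ and $x_2$; I will take $x_1$ to correspond to $1\in S$ and $x_2$ to a nonsquare $\nu$.

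Next I list $\Irr(H)$. The characters with $P$ in their kernel are the $(q-1)/2$ linear characters inflated from $C$. They take the value $1$ at $x_1$ and $x_2$ and have degree prime to $p$, which gives the first row of the table. Fix a nontrivial additive character $\psi$ of $\mathbb{F}_q$. Then $\Irr(P)=\{\lambda_c:u\mapsto\psi(cu)\}_{c\in\mathbb{F}_q}$, and $C$ permutes the nontrivial $\lambda_c$ with $c\mapsto s^{-1}c$, again producing two regular orbits of size $(q-1)/2$. Since $H$ is Frobenius, each orbit induces irreducibly, giving two characters $\theta_1=\lambda_1^H$ and $\theta_\nu=\lambda_\nu^H$ of degree $(q-1)/2$, which is prime to $p$. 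A count confirms the list is complete: the sum of squared degrees is $(q-1)/2+2\cdot((q-1)/2)^2=q(q-1)/2=|H|$.

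The key computation is the value of $\theta_c$ at the element $u\in P$. Since $P$ is normal and abelian, the induction formula gives
$$
\theta_c(u)=\sum_{s\in S}\psi(csu)=\frac12\Bigl(\sum_{t\in\mathbb{F}_q^\times}\psi(cut^2)\Bigr)=\frac12\bigl(-1+\eta(cu)\,\mathcal G\bigr).
$$
Here $\eta$ is the quadratic character and $\mathcal G=\sum_{t\in\mathbb{F}_q}\eta(t)\psi(t)$ is the quadratic Gauss sum. The middle equality uses that $t\mapsto t^2$ is $2$-to-$1$ onto $S$. The last equality comes from $\sum_{t}\psi(at^2)=\sum_{y}(1+\eta(y))\psi(ay)=\eta(a)\mathcal G$, after subtracting the $t=0$ term.

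The step I expect to be the main obstacle is identifying $\mathcal G$ with $\pm\tau$. For this I will use the standard identity $\mathcal G^2=\eta(-1)q$. Since $\eta(-1)=(-1)^{(q-1)/2}=\varepsilon$, we get $\mathcal G^2=\varepsilon q$, so $\mathcal G=\pm\tau$. Consequently the value $\theta_c(u)$ is $a$ or $\bar a$, according to whether $\eta(cu)\mathcal G=\tau$ or $-\tau$. Thus on each of the classes of $x_1$ and $x_2$, exactly one of $\theta_1,\theta_\nu$ takes the value $a$ and the other takes $\bar a$, since $\eta(cu)$ changes sign when $c$ is replaced by $c\nu$. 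Passing from $x_1$ to $x_2$ also multiplies $u$ by a nonsquare, so the two values are swapped. This yields the second and third rows of the table, each with multiplicity $1$ and degree with trivial $p$-part, on both classes.
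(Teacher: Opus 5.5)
Your argument is correct, but it takes a different route from the paper.

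\textbf{The paper's route.} The paper never writes down a model of $H$. It gets the Frobenius structure from class sizes: each $H$-class in $P^\#$ has size at most $|H:P|=(q-1)/2$, which forces $\bC_H(x_i)=P$. It then determines the two nonlinear values $b_1,b_2$ at $x_1$ from two sources. The first is column orthogonality, which gives $b_1+b_2=-1$ and $|b_1|^2+|b_2|^2=(q+1)/2$. The second is reality information imported from the character table of $G$ via Corollary~\ref{cor:ratreal}: the classes are real when $q\equiv 1\pmod 4$ and mutually inverse when $q\equiv 3\pmod 4$. Together these yield $(b_1-b_2)^2=\varepsilon q$. The interchange of $a$ and $\bar a$ on $x_2$ then follows because distinct classes must have distinct columns.

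\textbf{Your route.} You work in the explicit Borel model; its identification with $H$ follows by comparing orders with Lemma~\ref{lem:psl2-normalizer}. You compute $\theta_c(u)=\frac12(-1+\eta(cu)\mathcal G)$ directly, and your only external input is the classical evaluation $\mathcal G^2=\eta(-1)q$.

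\textbf{What each buys.} Your argument is self-contained inside $H$. It does not use the character table of $G$ or picky fusion. It is also more precise: it says exactly which of $\theta_1,\theta_\nu$ takes which value at which element. The swap between the two classes reduces to $\eta(\nu u)=-\eta(u)$, and the reality pattern of the classes comes out as a by-product of $\eta(-1)=\varepsilon$.

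The paper's route needs only $|P|=q$, $|H:P|=(q-1)/2$, the fact that $P^\#$ consists of two classes, and their reality pattern. It illustrates the paper's theme of transferring fusion information from $G$ to $\bN_G(P)$, and it is the same orthogonality technique the paper reuses for the Suzuki groups.
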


\begin{proof}
By Lemma~\ref{lem:psl2-normalizer}, $H=P\rtimes T$, where
$|P|=q$ and $|T|=(q-1)/2$. 
Since the elements of $P^\#$ are picky, Lemma~\ref{lem:picky-fusion} shows that
these are also the two $H$-classes contained in $P^\#$.
Since each $H$-class in $P^\#$ has size at most
$|H:P|=(q-1)/2$, we deduce that both of them have
size $(q-1)/2$. Therefore
$$
        |\bC_H(x_i)|=q
        \qquad (i=1,2).
$$
Since $P\leq \bC_H(x_i)$, it follows that $\bC_H(x_i)=P$ for $i=1,2$.
Thus no element of $H\setminus P$ centralizes a nonidentity element of $P$, and
$H$ is a Frobenius group with kernel $P$.

The characters of $H$ with $P$ in their kernel are precisely the characters of
$H/P$. Thus there are $(q-1)/2$ of them; they are linear and have value $1$ on
both $x_1$ and $x_2$.

It remains to determine the two nonlinear irreducible characters. Since $H$ is
a Frobenius group with abelian kernel $P$, the remaining irreducible characters
are induced from representatives of the $H$-orbits on
$\Irr(P)\setminus\{1_P\}$. The action of $H/P$ on
$\Irr(P)\setminus\{1_P\}$ is also fixed-point-free. Thus all the orbits on
$\Irr(P)\setminus\{1_P\}$ have size $|H:P|=(q-1)/2$, and hence there are
two such orbits. Let the corresponding induced characters be $\beta_1,\beta_2\in\Irr(H)$.
Both have degree $|H:P|=(q-1)/2$ and, in particular, have $p$-part of degree
equal to $1$.

Write $b_i=\beta_i(x_1)$ for $i=1,2$.
By the orthogonality of the columns corresponding to $1$ and $x_1$,
$$
        0=\sum_{\chi\in\Irr(H)}\chi(1)\overline{\chi(x_1)}.
$$
The linear characters contribute $(q-1)/2$, while
$\beta_1(1)=\beta_2(1)=(q-1)/2$. Hence
\[
        b_1+b_2=-1.                         \tag{1}
\]
Also $\bC_H(x_1)=P$, and so the orthogonality of the column corresponding to
$x_1$ gives
\[
        q=\sum_{\chi\in\Irr(H)}|\chi(x_1)|^2
          ={q-1\over 2}+|b_1|^2+|b_2|^2.    \tag{2}
\]

We now use the information on the two unipotent classes already read from the
character table of $G$. If $q\equiv 1\pmod 4$, then both classes are real.
Hence, by Corollary~\ref{cor:ratreal}, $b_1,b_2\in\mathbb R$, and (1),(2) give
$$
        (b_1-b_2)^2=q.
$$
If $q\equiv 3\pmod 4$, then the two classes are inverse to each other. Thus the
values of each character on $x_2$ are the complex conjugates of its values on
$x_1$. Since the columns corresponding to $x_1$ and $x_2$ are orthogonal, we get
$$
        b_1^2+b_2^2=-{q-1\over 2}.
$$
Together with (1), this gives
$$
        b_1b_2={q+1\over 4},
        \qquad
        (b_1-b_2)^2=-q.
$$
Thus in both cases, if
$$
        \varepsilon=(-1)^{(q-1)/2},
$$
we have
$$
        b_1+b_2=-1,
        \qquad
        (b_1-b_2)^2=\varepsilon q.
$$
With the notation fixed before the lemma, the two values on the class of $x_1$
are $a$ and $\bar a$. Renaming $\beta_1,\beta_2$ if necessary, they are
$a,\bar a$ in this order.

The same argument applied to $x_2$ shows that the two nonlinear values on the
class of $x_2$ are again $a$ and $\bar a$. The two columns corresponding to
$x_1$ and $x_2$ cannot be equal, since they represent distinct conjugacy
classes. Since all linear characters have value $1$ on both classes, the two
nonlinear values must therefore be interchanged on the second class. This gives
the displayed tables.
\end{proof}

\begin{lemma}\label{lem:psl2-odd-group-characters}
The irreducible characters of $G$ which are nonzero on the classes represented
by both $x_1$ and $x_2$ have the following values:
$$
\begin{array}{c|c|c}
\text{value} & p\text{-part of degree} & \text{multiplicity}\\ \hline
1 & 1 & m_+\\
-1 & 1 & m_-\\
a & 1 & 1\\
\bar a & 1 & 1
\end{array}
$$
where
$$
m_+=
\begin{cases}
\dfrac{q-1}{4},& \text{if } q\equiv 1\pmod 4,\\[1mm]
\dfrac{q+1}{4},& \text{if } q\equiv 3\pmod 4,
\end{cases}
\qquad
m_-=
\begin{cases}
\dfrac{q-1}{4},& \text{if } q\equiv 1\pmod 4,\\[1mm]
\dfrac{q-3}{4},& \text{if } q\equiv 3\pmod 4.
\end{cases}
$$
\end{lemma}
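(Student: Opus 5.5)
The plan is to read off the character table of $\SL_2(q)$ from \cite[Theorem~38.1]{dor} and keep only the characters with $Z(\SL_2(q))=\{\pm1\}$ in their kernel, since these are exactly the irreducible characters of $G=\PSL_2(q)$. The unipotent elements $x_1,x_2$ lift to the elements $c=\begin{pmatrix}1&0\\1&1\end{pmatrix}$ and $d=\begin{pmatrix}1&0\\\nu&1\end{pmatrix}$ with $\nu$ a nonsquare. For each family, the central character is determined by the value at $z=-I$, and I will sort the families according to it.

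I will go through the standard families one at a time.
\begin{itemize}[label=$\cdot$]
\item The trivial character has value $1$ and degree $1$.
\item The Steinberg character, of degree $q$, vanishes at $c,d$, so it is excluded.
\item The principal series characters $\chi_i$, of degree $q+1$, take the value $1$ at $c,d$. They have $z$ in the kernel exactly when $i$ is even, $1\le i\le (q-3)/2$.
\item The discrete series characters $\theta_j$, of degree $q-1$, take the value $-1$. They descend exactly when $j$ is even, $1\le j\le (q-1)/2$.
\item The four characters $\xi_1,\xi_2$ (degree $(q+1)/2$) and $\eta_1,\eta_2$ (degree $(q-1)/2$) take the values $\frac12(1\pm\sqrt{\varepsilon q})$ and $\frac12(-1\pm\sqrt{\varepsilon q})$ at $c,d$, interchanged between the two classes. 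The central characters of these four are $\xi_i(z)/\xi_i(1)=\varepsilon$ and $\eta_i(z)/\eta_i(1)=-\varepsilon$.
\end{itemize}
So when $q\equiv1\pmod4$ the $\xi$'s descend, and when $q\equiv 3\pmod 4$ the $\eta$'s descend.

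Next I check the value rows against the claimed table. All the relevant degrees $1$, $q\pm1$, $(q\pm1)/2$ are prime to $p$, so every $p$-part equals $1$.
\begin{itemize}[label=$\cdot$]
\item Case $q\equiv3\pmod4$. The $\eta$'s give the values $a,\bar a$. The value $1$ comes from $1_G$ and the $(q-3)/4$ even $\chi_i$, so $m_+=(q+1)/4$. The value $-1$ comes from the $(q-3)/4$ even $\theta_j$, so $m_-=(q-3)/4$.
\item Case $q\equiv1\pmod4$. The $\xi$'s give the values $\frac12(1\pm\tau)=-\bar a,-a$, which do not match the rows $a,\bar a$ of the table. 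I have to settle this discrepancy before the case can be finished.
\end{itemize}

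The main obstacle is exactly this bookkeeping in the case $q\equiv1\pmod4$: the sign conventions and the index parities in Dornhoff must be made consistent with the displayed table. I will first double-check, using column orthogonality, whether the table should read $-a,-\bar a$ in this case. Orthogonality of the column of $x_1$ with the column of $1$, applied to the descended characters, gives a linear constraint that pins the two values down. If instead the values really are $\pm\frac12(1\pm\tau)$, I will reconcile the labels with the statement by noting that only the fields $\QQ(\chi(x))=\QQ(\tau)$ and the values up to sign matter for Theorem~\ref{thm:ti-sylow-main}.

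Independently of the labels, the counts can be confirmed. The number of characters of $G$ is $(q+5)/2$. Subtracting $\St$ and the two characters with irrational values leaves $(q-1)/2=m_++m_-$. Orthogonality of the column of $x_1$ with the column of $1$ then fixes $m_+-m_-$, since $\sum\chi(1)\chi(x_1)=0$. This serves as a check on the parity counts above.
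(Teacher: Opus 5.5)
Your route is the paper's route (Dornhoff's table for $\SL_2(q)$, keeping the characters with $-I$ in the kernel), and your bookkeeping is correct. In fact it is more accurate than the paper's own proof. The discrepancy you found for $q\equiv 1\pmod 4$ is genuine and cannot be settled in favour of the statement: as written, the lemma is false in that case.

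Since $\xi_i(-I)/\xi_i(1)=\varepsilon$ and $\eta_i(-I)/\eta_i(1)=-\varepsilon$, the pair that descends to $\PSL_2(q)$ is
\begin{itemize}
\item $\xi_1,\xi_2$ (degree $(q+1)/2$) when $q\equiv 1\pmod 4$;
\item $\eta_1,\eta_2$ (degree $(q-1)/2$) when $q\equiv 3\pmod 4$.
\end{itemize}
The paper's proof interchanges these two families in both cases. Note that $\PSL_2(5)\cong A_5$ has no character of degree $2$, and $\PSL_2(7)$ has none of degree $4$.

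For $q\equiv 3\pmod 4$ this error is harmless, because the $\eta_i$ really do take the values $a,\bar a$; your verification of that case is complete. For $q\equiv 1\pmod 4$, your proposed orthogonality check is decisive. There are $m_+-1=(q-5)/4$ characters of degree $q+1$ with value $1$, $m_-=(q-1)/4$ characters of degree $q-1$ with value $-1$, and two characters of degree $(q+1)/2$ with values $v_1,v_2$. Hence
$$
0=1+\frac{(q+1)(q-5)}{4}-\frac{(q-1)^2}{4}+\frac{q+1}{2}(v_1+v_2)=\frac{q+1}{2}\,(v_1+v_2-1).
$$
So $v_1+v_2=1$, whereas $a+\bar a=-1$. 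Together with $\sum_\chi|\chi(x_1)|^2=|\bC_G(x_1)|=q$, this gives $\{v_1,v_2\}=\{(1+\tau)/2,(1-\tau)/2\}=\{-\bar a,-a\}$. Concretely:
\begin{itemize}
\item the degree-$3$ characters of $A_5$ take the values $(1\pm\sqrt5)/2$ on $5$-elements;
\item the degree-$5$ characters of $A_6\cong\PSL_2(9)$ take the values $2$ and $-1$ on elements of order $3$, not $1$ and $-2$.
\end{itemize}

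So the last step of your plan is where it breaks. You cannot obtain the displayed table by ``reconciling the labels'', because the lemma asserts specific values and they are wrong. The correct conclusion is a corrected statement: for $q\equiv 1\pmod 4$ the rows $a,\bar a$ become $-a,-\bar a$ (i.e.\ $(1\pm\tau)/2$); uniformly, the rows are $-\varepsilon a$ and $-\varepsilon\bar a$. The multiplicities $m_\pm$ and all $p$-parts are unchanged. Your computation already proves this corrected version.

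The corrected version is all that Proposition~\ref{prop:psl2-picky} needs, with one change to its proof. The exceptional characters of $G$ must be matched with the nonlinear characters of $H$ so that their values agree \emph{up to sign} at $x_1$, not exactly; by Lemma~\ref{lem:psl2-odd-normalizer-characters} the $H$-values there are $a,\bar a$. On passing to $x_2$ both pairs of values are interchanged, so agreement up to sign persists. The fields $\QQ(\tau)$ coincide. Hence the strong global picky conjecture for $\PSL_2(q)$, and Theorem~\ref{thm:ti-sylow-main} for this family, still hold.
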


\begin{proof}
The values are obtained from the character table of $\SL_2(q)$ in
\cite[Theorem~38.1]{dor}, by retaining the irreducible characters with
$\bZ(\SL_2(q))$ in their kernel.

Assume first that $q\equiv 1\pmod 4$. In Dornhoff's notation, the irreducible
characters of $\PSL_2(q)$ are
$$
        1,\quad \St,\quad \eta_1,\quad \eta_2,\quad
        \chi_{2i}\ (1\leq i\leq (q-5)/4),\quad
        \theta_{2j}\ (1\leq j\leq (q-1)/4).
$$
On the two nontrivial unipotent classes, the Steinberg character has value
$0$, the characters $1$ and $\chi_{2i}$ have value $1$, the characters
$\theta_{2j}$ have value $-1$, and the two exceptional characters
$\eta_1,\eta_2$ have values $a,\bar a$, in one order on the class represented
by $x_1$ and in the opposite order on the class represented by $x_2$. Hence
$$
        m_+=1+\frac{q-5}{4}=\frac{q-1}{4},
        \qquad
        m_-=\frac{q-1}{4}.
$$

Assume next that $q\equiv 3\pmod 4$. The irreducible characters of
$\PSL_2(q)$ are
$$
        1,\quad \St,\quad \xi_1,\quad \xi_2,\quad
        \chi_{2i}\ (1\leq i\leq (q-3)/4),\quad
        \theta_{2j}\ (1\leq j\leq (q-3)/4).
$$
On the two nontrivial unipotent classes, the Steinberg character has value
$0$, the characters $1$ and $\chi_{2i}$ have value $1$, the characters
$\theta_{2j}$ have value $-1$, and the two exceptional characters
$\xi_1,\xi_2$ have values $a,\bar a$, in one order on the class represented
by $x_1$ and in the opposite order on the class represented by $x_2$. Hence
$$
        m_+=1+\frac{q-3}{4}=\frac{q+1}{4},
        \qquad
        m_-=\frac{q-3}{4}.
$$
This gives the stated tables. All the nonzero characters appearing there have
degree prime to $p$, and the Steinberg character is the only irreducible
character of $G$ vanishing on the nontrivial unipotent classes.
\end{proof}

\begin{proposition}\label{prop:psl2-picky}
The conclusion of Theorem~\ref{thm:ti-sylow-main} holds for $G=\PSL_2(q)$.
In fact, $G$ satisfies the strong global picky conjecture.
\end{proposition}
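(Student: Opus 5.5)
We prove the strong global picky conjecture for $G=\PSL_2(q)$ by matching, class by class, the value tables of Lemma~\ref{lem:psl2-even}, Lemma~\ref{lem:psl2-odd-normalizer-characters} and Lemma~\ref{lem:psl2-odd-group-characters}. Since $P$ is elementary abelian, $P'=1$. Hence every element of $P^\#$ lies in $P\setminus P'$ and there are no bad classes. The strong statement for all of $P^\#$ is therefore exactly what Theorem~\ref{thm:ti-sylow-main} asks for. Every $\mathcal P\subseteq P^\#$ is a set of picky elements, and by Lemma~\ref{lem:picky-fusion} the $G$-classes and $H$-classes meeting $P^\#$ coincide. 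So it suffices to build one bijection $f:\Irr^{P^\#}(G)\to\Irr^{P^\#}(H)$ such that $f(\chi)(x)=\pm\chi(x)$ for every $x\in P^\#$ and $f$ preserves the $p$-part of degrees. Such an $f$ restricts correctly to any $\mathcal P$. Condition~(3) then follows because $f(\chi)(x)=0$ if and only if $\chi(x)=0$. Condition~(2) follows from $f(\chi)(x)=\pm\chi(x)$.

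\emph{Case $q$ even.} By Lemma~\ref{lem:psl2-even}, $\Irr^{P^\#}(G)=\Irr(G)\setminus\{\St\}$, which has $q$ elements. Also $\Irr^{P^\#}(H)=\Irr(H)$, which has $(q-1)+1=q$ elements. All values on $P^\#$ are $\pm1$, and all degrees in both sets are odd, so $p$-parts equal $1$. Since $P^\#$ is a single class, any bijection between these two sets of size $q$ works.

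\emph{Case $q$ odd.} There are two classes in $P^\#$, represented by $x_1$ and $x_2$. By Lemma~\ref{lem:psl2-odd-group-characters}, $\Irr^{P^\#}(G)=\Irr(G)\setminus\{\St\}$. Each of these characters has degree prime to $p$ and has the same value on both classes, except the two exceptional characters, whose values $a,\bar a$ are interchanged between the two classes. The same pattern holds in $H$ by Lemma~\ref{lem:psl2-odd-normalizer-characters}. There the $(q-1)/2$ linear characters take value $1$ on both classes, and $\beta_1,\beta_2$ take values $a,\bar a$ in one order on $x_1$ and in the opposite order on $x_2$. We define $f$ as follows.
\begin{itemize}
\item $f$ sends the $m_++m_-$ characters of $G$ with values $\pm1$ bijectively onto the linear characters of $H$. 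The counts agree, since $m_++m_-=(q-1)/2$ in both congruence cases.
\item $f$ sends the exceptional character of $G$ with value $a$ at $x_1$ to the $\beta_i$ with value $a$ at $x_1$, and the other exceptional character to the other $\beta_j$.
\end{itemize}
Because both pairs interchange $a$ and $\bar a$ on passing from $x_1$ to $x_2$, the matching made at $x_1$ forces equal values at $x_2$ as well. All $p$-parts of degrees equal $1$.

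The only point needing care, and the expected main obstacle, is this last consistency: the pairing of exceptional characters must be compatible on both classes at once. This rests on the "opposite order" statements in the two lemmas. We must use that the labelling of $x_1,x_2$ is the same in $G$ and in $H$, which Lemma~\ref{lem:picky-fusion} guarantees. With that in hand, exact equality of values holds on the non-exceptional part up to the sign $-1$ on the $m_-$ characters. This gives the strong global picky conjecture, and the proposition follows.
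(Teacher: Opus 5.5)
Your proposal is correct and follows essentially the same route as the paper. In both parities you identify $\Irr^{P^\#}(G)=\Irr(G)\setminus\{\St\}$ and $\Irr^{P^\#}(H)=\Irr(H)$. For odd $q$ you pair the exceptional characters with $\beta_1,\beta_2$ by matching values at $x_1$, so that the $a\leftrightarrow\bar a$ interchange in both tables forces agreement at $x_2$, and you send the remaining characters with values $\pm1$ to the linear characters of $H$. Two of your points are implicit in the paper but correct: since $P$ is elementary abelian, $P'=1$, so the failure clause of Theorem~\ref{thm:ti-sylow-main} is vacuous; and a single value-preserving bijection on $\Irr^{P^\#}$ handles every $\mathcal P\subseteq P^\#$.
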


\begin{proof}
If $q$ is even, Lemma~\ref{lem:psl2-even} shows that both $\Irr^x(G)$ and
$\Irr^x(H)$ have $q$ characters, all with degree $2$-part equal to $1$, and all
nonzero values on the unique nonidentity class in $P$ are $\pm1$. Choose any
bijection between these two sets. Since there is only one nonidentity class in
$P$, the preceding paragraph shows that it preserves the $2$-parts of degrees
and the values up to sign on that class.

Assume now that $q$ is odd. By Lemmas~\ref{lem:psl2-odd-normalizer-characters}
and~\ref{lem:psl2-odd-group-characters}, the Steinberg character is the only
irreducible character of $G$ which is not in $\Irr^{P^\#}(G)$, and every
irreducible character of $H$ is nonzero on both classes represented by $x_1$ and
$x_2$. Hence
$$
\begin{aligned}
\Irr^{x_1}(G)&=\Irr^{x_2}(G)=\Irr^{P^\#}(G)
              =\Irr(G)\setminus\{\St\},\\
\Irr^{x_1}(H)&=\Irr^{x_2}(H)=\Irr^{P^\#}(H)=\Irr(H).
\end{aligned}
$$
Both sets have cardinality $(q+3)/2$. Choose a bijection by sending the two
exceptional characters of $G$ to the two nonlinear characters of $H$ so that
their values agree on $x_1$, and send all remaining characters bijectively to
the linear characters of $H$. Then their values also agree on $x_2$, by the
displayed tables. All characters involved have degree $p$-part equal to $1$.
For the remaining characters, the values in $G$ are $\pm1$, while the
corresponding linear characters of $H$ have value $1$ on $P$. Thus the values
agree up to sign on both classes. This proves the strong global picky conjecture.
\end{proof}

\subsection{The Suzuki groups ${}^2B_2(q)$}

Let
$$
G={}^2B_2(q)=\Sz(q),\qquad q=2^{2m+1}\geq 8,
\qquad r=\sqrt{2q}=2^{m+1},
$$
let $P\in\Syl_2(G)$, and put $H=\bN_G(P)$. The structure of $H$ is given in
\cite[Theorem~9]{suz}, and the character table of $G$ is given in
\cite[Theorem~13]{suz}.

By \cite[Theorem~13]{suz}, the nonidentity elements of $P$ intersect three
$G$-classes: one class of involutions, represented by $\sigma\in P'=\bZ(P)$,
and two classes of elements of order $4$, represented by $\rho$ and
$\rho^{-1}$ in $P\setminus P'$. By Lemma~\ref{lem:picky-fusion}, these three elements are also representatives of the three classes of $H$ contained in $P^\#$. 

\begin{lemma}\label{lem:suzuki-normalizer-characters}
The irreducible characters of $H$ which are nonzero on the classes represented
by $\sigma,\rho,\rho^{-1}$ give the following data. For the involution class:
$$
\begin{array}{c|c|c}
\text{value} & 2\text{-part of degree} & \text{multiplicity}\\ \hline
1 & 1 & q-1\\
q-1 & 1 & 1\\
-r/2 & r/2 & 2
\end{array}
$$
For each of the two classes of elements of order $4$:
$$
\begin{array}{c|c|c}
\text{value} & 2\text{-part of degree} & \text{multiplicity}\\ \hline
1 & 1 & q-1\\
-1 & 1 & 1\\
ri/2 & r/2 & 1\\
-ri/2 & r/2 & 1
\end{array}
$$
\end{lemma}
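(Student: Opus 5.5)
The plan is to build $\Irr(H)$ explicitly from the structure of $H$, in the same way as in Lemma~\ref{lem:psl2-odd-normalizer-characters}. The values on the involution class will come from Clifford theory, and the values on the classes of elements of order $4$ will come from column orthogonality.

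\emph{Structure and counting.} By \cite[Theorem~9]{suz}, $H=P\rtimes T$ with $T$ cyclic of order $q-1$. The group $P$ has order $q^2$ and $P'=\bZ(P)=\Phi(P)$ has order $q$. Moreover $T$ acts fixed-point-freely on $P^\#$, so $H$ is a Frobenius group with kernel $P$. By Lemma~\ref{lem:picky-fusion}, $\sigma,\rho,\rho^{-1}$ represent the three $H$-classes in $P^\#$. The group $P$ has $|P:P'|=q$ linear characters. The remaining $q^2-q$ of $|P|$ is accounted for by the nonlinear characters of $P$. Each of them lies over a nontrivial character of $\bZ(P)$, and I expect the standard fact that the degrees are all $r/2=2^m$, giving $2(q-1)$ such characters, since $(q^2-q)/(q/2)=2(q-1)$. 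I will justify this from the Suzuki $2$-group structure or from \cite{suz}. The group $T$ permutes $\Irr(P)^\#$ freely. Hence:
\begin{itemize}
\item $H/P$ gives $q-1$ linear characters, with value $1$ on $P$;
\item $H/P'$ is Frobenius with kernel $P/P'$ of order $q$, so it contributes exactly one further character $\theta$, of degree $q-1$;
\item the $2(q-1)$ nonlinear characters of $P$ form two $T$-orbits, inducing $\beta_1,\beta_2$ of degree $(q-1)r/2$, whose $2$-part is $r/2$.
\end{itemize}
These account for all $q+2$ irreducible characters, which matches the class count $(q-1)+3$.

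\emph{Involution class.} We have $\theta(\sigma)=q-1$ because $\sigma\in P'\le\ker\theta$. For $\beta_i$, restrict to $P$: $\beta_i|_P$ is the sum of the $q-1$ conjugates $\psi^t$, each equal to $(r/2)\lambda^t$ on $\bZ(P)$. Here $\lambda^t$ runs over all nontrivial characters of $\bZ(P)$, because $T$ is regular on them. Hence $\beta_i(\sigma)=(r/2)\cdot(-1)=-r/2$. This yields the first table, with no vanishing characters.

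\emph{Elements of order $4$.} On $\rho$, the linear characters give $1$. Since $\theta$ is the Frobenius character of $H/P'$ induced from a nontrivial character of $P/P'$, and the image of $\rho$ in $P/P'$ is nontrivial, we get $\theta(\rho)=-1$. Let $b_i=\beta_i(\rho)$. Orthogonality of the columns of $1$ and $\rho$ gives $(q-1)-(q-1)+(q-1)(r/2)(b_1+b_2)=0$, so $b_2=-b_1$. Next, $\rho$ and $\rho^{-1}$ lie in different classes and the $\rho^{-1}$-column is the complex conjugate of the $\rho$-column. Their orthogonality therefore gives $\sum_\chi\chi(\rho)^2=0$, that is $(q-1)+1+2b_1^2=0$. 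Hence $b_1^2=-q/2=-(r/2)^2$ and $b_1=\pm ri/2$, after renaming the $\beta_i$. The same argument applies to $\rho^{-1}$.

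As a consistency check, $|\bC_H(\rho)|=q+q/2+q/2=2q$, which agrees with $\bC_P(\rho)$ having order $2q$ in \cite{suz}. The main obstacle is the first step: pinning down rigorously that all nonlinear characters of $P$ have degree $r/2$ and that $T$ acts freely on them. I would try first to read this off from the explicit description of $P$ in \cite{suz}, and otherwise deduce it from the class sizes of $P$ via the number of $P$-classes.
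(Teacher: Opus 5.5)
Your proposal is correct. For the classes of $\rho$ and $\rho^{-1}$ it is the paper's argument:
\begin{itemize}
\item $\theta(\rho)=-1$ comes from the Frobenius quotient $H/P'$;
\item $b_1+b_2=0$ comes from orthogonality with the column of $1$;
\item $2b_1^2=-q$ comes from orthogonality of the columns of $\rho$ and $\rho^{-1}$.
\end{itemize}
The paper also records $|b_1|^2+|b_2|^2=q$ from $|\bC_H(\rho)|=2q$, but does not need it.

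The difference is the involution column. The paper uses $|\bC_H(\sigma)|=q^2$ and the reality of values at an involution. From two orthogonality relations it gets $\varphi_1(\sigma)+\varphi_2(\sigma)=-r$ and $\varphi_1(\sigma)^2+\varphi_2(\sigma)^2=q$, and solves these to obtain $-r/2$ twice. You instead compute $\beta_i(\sigma)$ directly: restrict the induced character to $\bZ(P)$ and sum a nontrivial linear character of $\bZ(P)$ over $\bZ(P)^\#$. This is the method the paper itself uses for the value $-q$ in the $\PSU_3(q)$ case. Your route needs the description of $\beta_i$ as induced from nonlinear characters of $P$ and the regularity of $T$ on $\bZ(P)^\#$, but avoids centralizer orders and rationality. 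The paper's route needs only the degrees.

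The paper does not prove the step you flag as the main obstacle either. It takes the two remaining characters and their degree $r(q-1)/2$ from the character count in \cite[Proposition~18]{suz}. You can close it without the fine structure of $P$ by running your consistency check backwards.
\begin{itemize}
\item $H$ is Frobenius with three nonidentity classes in $P$ and abelian complement of order $q-1$, so it has exactly $1+3+(q-2)=q+2$ classes.
\item After the $q-1$ linear characters and $\theta$, exactly two irreducible characters remain. Hence the nonlinear characters of $P$ form exactly two regular $T$-orbits, of degrees $d_1,d_2$, say.
\item Then $(q-1)(d_1^2+d_2^2)=q^2-q$ gives $d_1^2+d_2^2=q=2^{2m+1}$.
\item Since $d_1,d_2$ are powers of $2$, this forces $d_1=d_2=2^m=r/2$.
\end{itemize}
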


\begin{proof}
By Suzuki's description of the normalizer of a Sylow $2$-subgroup
\cite[Theorem~9]{suz}, $H$ is a Frobenius group of order $q^2(q-1)$,
with kernel $P$ and cyclic complement of order $q-1$.  The $H$-class of $\sigma$ is $\bZ(P)\setminus\{1\}$ and has size $q-1$. The other two
classes have the same size, and together they
are $P\setminus P'$. Hence each of them has size $q(q-1)/2$. Thus
\[
        |\bC_H(\sigma)|=q^2,
        \qquad
        |\bC_H(\rho)|=|\bC_H(\rho^{-1})|=2q.        \tag{1}
\]

The characters of $H$ with $P$ in their kernel are exactly the characters of
$H/P$. Thus there are $q-1$ such characters; they are linear and all have value
$1$ on $P$.

We next determine the nonlinear characters. The quotient $H/P'$ is a Frobenius
group with kernel $P/P'$ and cyclic complement of order $q-1$. Since the
complement is transitive on $(P/P')^\#$, this quotient has exactly one nonlinear
irreducible character $\varphi_3\in\Irr(H/P')$ of degree $q-1$. For a Frobenius group with abelian kernel and a transitive complement on the
nonidentity elements of the kernel, this nonlinear character has value $q-1$ on
the identity of $P/P'$ and value $-1$ on the nonidentity elements of $P/P'$.
Therefore
$$
        \varphi_3(\sigma)=q-1,
        \qquad
        \varphi_3(\rho)=\varphi_3(\rho^{-1})=-1.
$$

By the character count in the proof of \cite[Proposition~18]{suz}, $H$ has two
further nonlinear irreducible characters. Denote them by $\varphi_1$ and
$\varphi_2$. Their degrees are
$$
        d=\varphi_1(1)=\varphi_2(1)=\frac{r(q-1)}2 .
$$

We first compute their values on $\sigma$. Put
$s_j=\varphi_j(\sigma)$ for $j=1,2$. Note that since $\sigma$ is an involution, $s_j$ is rational for $j=1,2$. Orthogonality of the columns corresponding
to $1$ and $\sigma$ gives
$$
        0=\sum_{\chi\in\Irr(H)}\chi(1)\overline{\chi(\sigma)}.
$$
The linear characters contribute $q-1$, the character $\varphi_3$ contributes
$(q-1)^2$, and the remaining contribution is
$d(s_1+s_2)$. Hence, since $r^2=2q$,
\[
        s_1+s_2=-r.                                      \tag{2}
\]
Using (1), the orthogonality relation applied to the column of $\sigma$ gives
$$
        q^2=(q-1)+(q-1)^2+s_1^2+s_2^2,
$$
and therefore
\[
        |s_1|^2+|s_2|^2=q.                                \tag{3}
\]
Since $\sigma$ is an involution, the values $s_1,s_2$ are real. From (2) and
(3), using again $r^2=2q$, we get
$$
        s_1=s_2=-r/2.
$$

Now put $a_j=\varphi_j(\rho)$ for $j=1,2$. Orthogonality of the columns
corresponding to $1$ and $\rho$ gives
$$
        0=(q-1)-(q-1)+d(\overline{a_1}+\overline{a_2}),
$$
whence
\[
        a_1+a_2=0.                                      \tag{4}
\]
Using (1), the orthogonality relation applied to  the column of $\rho$ gives
$$
        2q=(q-1)+1+|a_1|^2+|a_2|^2,
$$
and therefore
\[
        |a_1|^2+|a_2|^2=q.                              \tag{5}
\]
Finally, the columns corresponding to $\rho$ and $\rho^{-1}$ are orthogonal.
Since character values on inverse elements are complex conjugates, this gives
$$
        0=(q-1)+1+a_1^2+a_2^2,
$$
that is,
\[
        a_1^2+a_2^2=-q.                                 \tag{6}
\]
By (4), $a_2=-a_1$, and then (6) gives $2a_1^2=-q$. Since $r^2=2q$, after
interchanging $\varphi_1$ and $\varphi_2$ if necessary,
$$
        a_1=ri/2,
        \qquad
        a_2=-ri/2.
$$
The values on $\rho^{-1}$ are the complex conjugates of the values on $\rho$.
This gives the displayed table.
\end{proof}

\begin{lemma}\label{lem:suzuki-group-characters}
The irreducible characters of $G$ which are nonzero on the classes represented
by $\sigma,\rho,\rho^{-1}$ give the following data. For the involution class:
$$
\begin{array}{c|c|c}
\text{value} & 2\text{-part of degree} & \text{multiplicity}\\ \hline
1 & 1 & q/2\\
r-1 & 1 & (q+r)/4\\
-(r+1) & 1 & (q-r)/4\\
-r/2 & r/2 & 2
\end{array}
$$
For each of the two classes of elements of order $4$:
$$
\begin{array}{c|c|c}
\text{value} & 2\text{-part of degree} & \text{multiplicity}\\ \hline
1 & 1 & q/2\\
-1 & 1 & q/2\\
ri/2 & r/2 & 1\\
-ri/2 & r/2 & 1
\end{array}
$$
\end{lemma}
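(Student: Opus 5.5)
The plan is to read the values off Suzuki's character table \cite[Theorem~13]{suz}, in the same way Lemma~\ref{lem:psl2-odd-group-characters} was read off Dornhoff's table. Recall that $|G|=q^2(q^2+1)(q-1)$, with $q^2+1=(q+r+1)(q-r+1)$. The irreducible characters of $G$ fall into six families:
\begin{itemize}
\item the trivial character $1$;
\item the Steinberg character $\St$, of degree $q^2$;
\item $(q-2)/2$ characters $X_i$ of degree $q^2+1$, induced from the Borel subgroup $H$;
\item $(q+r)/4$ characters $Y_j$ of degree $(q-r+1)(q-1)$;
\item $(q-r)/4$ characters $Z_k$ of degree $(q+r+1)(q-1)$;
\item two characters $W_1,W_2$ of degree $r(q-1)/2$.
\end{itemize}
The total count is $1+1+(q-2)/2+(q+r)/4+(q-r)/4+2=q+3$, which matches the number of classes. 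All degrees except $\St$ and the $W_l$ are odd. The $W_l$ have $2$-part $r/2$, and $\St(1)=q^2$.

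The key step is to record, for each family, its values on $\sigma$, $\rho$ and $\rho^{-1}$.
\begin{itemize}
\item $\St$ vanishes on all three classes, since it vanishes off $2'$-elements except the identity.
\item $X_i=\operatorname{Ind}_H^G\lambda$ with $\lambda$ linear. By Lemma~\ref{lem:picky-fusion} and the induction formula with $|\bC_G(u)|=|\bC_H(u)|$ for $u\in P^\#$, we get $X_i(u)=\lambda(u)=1$ on all three classes.
\item $Y_j$ has value $r-1$ at $\sigma$ and $-1$ at $\rho^{\pm1}$.
\item $Z_k$ has value $-(r+1)$ at $\sigma$ and $-1$ at $\rho^{\pm1}$.
\item $W_l$ has value $-r/2$ at $\sigma$ and $\pm ri/2$ at $\rho$, with opposite signs for $l=1,2$ and conjugate values at $\rho^{-1}$.
\end{itemize}

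Tallying the involution column, value $1$ occurs for $1$ together with the $X_i$, giving $1+(q-2)/2=q/2$ characters. Value $r-1$ occurs $(q+r)/4$ times, value $-(r+1)$ occurs $(q-r)/4$ times, and value $-r/2$ occurs twice with $2$-part $r/2$.

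For $\rho^{\pm1}$, value $1$ again occurs $q/2$ times. Value $-1$ occurs $(q+r)/4+(q-r)/4=q/2$ times, and $\pm ri/2$ each occur once.

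All listed characters are nonzero on the three classes, and only $\St$ vanishes. This gives the tables in the statement.

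The main obstacle is pinning down the values of $Y_j$, $Z_k$ and $W_l$ correctly from Suzuki's paper, whose notation is idiosyncratic. As a consistency check, I would verify the column orthogonality relations with $|\bC_G(\sigma)|=q^2$ and $|\bC_G(\rho)|=2q$.

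For the $\sigma$-column with the identity, the sum is
\[
\tfrac{q}{2}\cdot 1+\tfrac{q-2}{2}q^2+\tfrac{q+r}{4}(q-r+1)(q-1)(r-1)-\tfrac{q-r}{4}(q+r+1)(q-1)(r+1)-r^2(q-1)\tfrac{r}{2}\cdot\tfrac12\cdot 2,
\]
which should equal $0$.

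For the norm of the $\sigma$-column, using $r^2=2q$, we need
\[
\tfrac q2+\tfrac{q+r}4(r-1)^2+\tfrac{q-r}4(r+1)^2+\tfrac{r^2}{2}=q^2 .
\]
Expanding the two middle terms gives $\tfrac{q}{2}(2q+1)-qr\cdot\ldots$, and the total reduces to $q^2$. The analogous check for $\rho$ is $q/2+q/2+q/2+q/2=2q$.

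These relations, together with Lemma~\ref{lem:suzuki-normalizer-characters} for the $W_l$ values via restriction, confirm the table.
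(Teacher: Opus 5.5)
Your proposal is correct and is essentially the paper's proof: both read the values of $1_G$, $X_i$, $Y_j$, $Z_k$, $W_1,W_2$ on $\sigma,\rho,\rho^{-1}$ off Suzuki's table \cite[Theorem~13]{suz}, note that the Steinberg character is the only one vanishing there, and tally the multiplicities. Your derivation of $X_i(u)=1$ (induction from $H$, using $\bC_G(u)\le H$ for picky $u$) and your orthogonality checks are harmless extras, though the checks contain two small slips: the $W$-contribution to $\sum_\chi\chi(1)\chi(\sigma)$ is $-2\cdot\frac{r(q-1)}{2}\cdot\frac r2=-\frac{r^2(q-1)}{2}$ (no extra factor $r$), and in the norm check the two middle terms sum to $\frac q2(2q+1)-r^2=q^2-\frac{3q}{2}$, after which both relations do hold.
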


\begin{proof}
By Suzuki's character table \cite[Theorem~13]{suz}, the irreducible characters
of $G$ are
$$
1_G,
\quad X,
\quad X_i,
\quad Y_j,
\quad Z_k,
\quad W_1,
\quad W_2,
$$
where there are
$$
\frac q2-1 \text{ characters } X_i,
\qquad
\frac{q+r}{4} \text{ characters } Y_j,
\qquad
\frac{q-r}{4} \text{ characters } Z_k.
$$
The character $X$ vanishes on all three nonidentity $2$-classes meeting $P$.
The remaining values are
$$
1_G(\sigma)=1_G(\rho)=1_G(\rho^{-1})=1,
$$
$$
X_i(\sigma)=X_i(\rho)=X_i(\rho^{-1})=1,
$$
$$
Y_j(\sigma)=r-1,
\qquad
Y_j(\rho)=Y_j(\rho^{-1})=-1,
$$
$$
Z_k(\sigma)=-(r+1),
\qquad
Z_k(\rho)=Z_k(\rho^{-1})=-1,
$$
$$
W_1(\sigma)=W_2(\sigma)=-r/2,
$$
$$
W_1(\rho)=ri/2,
\qquad
W_2(\rho)=-ri/2,
$$
$$
W_1(\rho^{-1})=-ri/2,
\qquad
W_2(\rho^{-1})=ri/2.
$$
The characters $1_G,X_i,Y_j,Z_k$ have odd degree, while
$$
W_1(1)=W_2(1)=\frac{r(q-1)}2,
$$
whose $2$-part is $r/2$. This gives the displayed tables.
\end{proof}

\begin{proposition}\label{prop:suzuki-picky}
The conclusion of Theorem~\ref{thm:ti-sylow-main} holds for
$G={}^2B_2(q)$.
\end{proposition}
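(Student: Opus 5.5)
We use Lemmas~\ref{lem:suzuki-normalizer-characters} and~\ref{lem:suzuki-group-characters} to build the bijection, with $\mathcal P=P^\#$. Since $P$ is TI, every element of $P^\#$ is picky, and the three $H$-classes in $P^\#$ are represented by $\sigma,\rho,\rho^{-1}$. So the sets $\Irr^{\mathcal P}(G)$ and $\Irr^{\mathcal P}(H)$ are determined by the value tables at these three elements.

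\textbf{Identifying the sets.} In $G$, the only character vanishing at $\sigma$, $\rho$ and $\rho^{-1}$ is $X$. Every other character is nonzero at all three: for instance $r-1\neq 0$ and $-(r+1)\neq 0$. In $H$, every irreducible character is nonzero at all three elements. Hence
\[
\Irr^{x}(G)=\Irr(G)\setminus\{X\}, \qquad \Irr^{x}(H)=\Irr(H) \qquad \text{for all } x\in P^\#,
\]
so condition~(3) of Conjecture~\ref{conj:global-picky} is automatic. Both sets have $q+2$ elements.

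\textbf{The bijection.} Define $f$ as follows:
\begin{itemize}[label={}]
\item send $W_1\mapsto\varphi_1$ and $W_2\mapsto\varphi_2$;
\item send one of the $Y_j$ or $Z_k$ to $\varphi_3$;
\item send the remaining $q-1$ odd-degree characters $1_G$, $X_i$, $Y_j$, $Z_k$ to the $q-1$ linear characters of $H$.
\end{itemize}

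\textbf{Checking the conditions.} Degree $2$-parts match: $r/2$ for the $W$'s and $\varphi_1,\varphi_2$, and $1$ for everything else.

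For the fields of values, first take the elements of order $4$. At $\rho$ the $W$'s have values $\pm ri/2$, matching $\varphi_1,\varphi_2$ exactly, with the correct labelling at both $\rho$ and $\rho^{-1}$. All other values at $\rho$ are $\pm1$ on both sides. So on the good classes the values agree up to sign, which gives the strong picky conjecture for elements of $P\setminus P'$.

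At $\sigma$, the $W$'s and $\varphi_1,\varphi_2$ all have value $-r/2$. The remaining values are rational integers on both sides, so $\mathbb Q(\chi(\sigma))=\mathbb Q=\mathbb Q(f(\chi)(\sigma))$. This proves the global picky conjecture.

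\textbf{Failure of the strong conjecture on $P'$.} Since $P'=\bZ(P)$, every element of $P'\setminus\{1\}$ is $H$-conjugate to $\sigma$. Any bijection preserving degree $2$-parts must send the $2$-part-$1$ characters of $G$ bijectively onto those of $H$. On the $G$-side their values at $\sigma$ have absolute values
\[
1 \ (q/2 \text{ times}),\qquad r-1 \ ((q+r)/4 \text{ times}),\qquad r+1 \ ((q-r)/4 \text{ times}).
\]
On the $H$-side they have absolute values $1$ ($q-1$ times) and $q-1$ (once). As multisets of absolute values these differ, since $r-1\neq 1$ once $q\ge 8$. So no bijection can satisfy $f(\chi)(\sigma)=\pm\chi(\sigma)$.

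The main work already lies in the two preceding lemmas. The only delicate points here are to verify that no irreducible character of $H$ vanishes at $\sigma$, $\rho$ or $\rho^{-1}$, and to confirm that the labelling of $W_1,W_2$ and $\varphi_1,\varphi_2$ stays compatible across $\rho$ and $\rho^{-1}$. Both are read off the displayed tables.
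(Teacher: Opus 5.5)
Your proof is correct and follows the paper's argument. You identify $\Irr^x(G)=\Irr(G)\setminus\{X\}$ and $\Irr^x(H)=\Irr(H)$ in the same way, you use the same bijection ($W_l\mapsto\varphi_l$, one $Y_j$ or $Z_k$ to $\varphi_3$, and the remaining characters to the linear characters), and you check values in the same way. The only cosmetic difference is in the failure at $\sigma$: you compare the multisets of absolute values of the odd-degree characters, whereas the paper notes that $\varphi_3(\sigma)=q-1$ is not $\pm$ any of $1$, $r-1$, $-(r+1)$; both arguments are immediate.
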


\begin{proof}
By Lemma~\ref{lem:suzuki-group-characters}, $X$ is the only irreducible
character of $G$ which vanishes on the nonidentity $2$-classes meeting $P$.
Thus
$$
\Irr^{P^\#}(G)=\Irr(G)\setminus\{X\}.
$$
Moreover, every character in $\Irr(G)\setminus\{X\}$ is nonzero on each of the
three classes represented by $\sigma,\rho,\rho^{-1}$. By
Lemma~\ref{lem:suzuki-normalizer-characters}, every irreducible character of
$H$ is nonzero on these three classes. Hence
$$
\Irr^x(G)=\Irr^{P^\#}(G),
\qquad
\Irr^x(H)=\Irr^{P^\#}(H)
\qquad (1\neq x\in P).
$$
The two global sets have the same cardinality:
$$
|\Irr^{P^\#}(G)|
=1+\left(\frac q2-1\right)+\frac{q+r}{4}+\frac{q-r}{4}+2
=q+2
=|\Irr(H)|.
$$
Choose a bijection by sending $W_1,W_2$ to $\varphi_1,\varphi_2$ in the order
displayed in Lemma~\ref{lem:suzuki-normalizer-characters}, sending one of the
odd-degree characters with value $-1$ on $\rho$ to $\varphi_3$, and sending the
remaining odd-degree characters bijectively to the linear characters of $H$.
This preserves the $2$-parts of degrees. It also preserves the fields of values
on all nonidentity elements of $P$: the values of odd-degree characters are
rational, and the only nonrational values are $\pm ri/2$, which occur on the two
characters with degree $2$-part $r/2$ in both tables.

On the good classes $\rho$ and $\rho^{-1}$, the same bijection preserves values
up to sign. Indeed, the values of $W_1,W_2$ and $\varphi_1,\varphi_2$ agree on
$\rho$ and on $\rho^{-1}$ by the displayed tables, one odd-degree character with
value $-1$ is sent to $\varphi_3$, and the remaining odd-degree characters are
sent to linear characters, whose value on $P$ is $1$.

On the bad class represented by the involution $\sigma$, strong picky fails.
The character $\varphi_3\in\Irr(H) 
$ has odd degree and value $q-1$ on $\sigma$. For $G$, the possible values of odd-degree characters on $\sigma$ are
$$
1,
\qquad
r-1,
\qquad
-(r+1).
$$
Since $q=2^{2m+1}\geq 8$ and $r^2=2q$, we have $q-1>r+1>1$. Thus $q-1$ is not
equal, even up to sign, to any of these values. Hence no bijection can preserve
both the $2$-parts of degrees and values up to sign on the involution class.
This proves the proposition.
\end{proof}

\subsection{$\PSU_3(q)$}

Let $G=\PSU_3(q)$, where $q$ is a power of the defining prime $p$. Put
$$
d=(3,q+1),
\qquad
r'=\frac{q+1}{d},
$$
and let $P\in\Syl_p(G)$ and $H=\bN_G(P)$.
By \cite[Table~8.5]{bhrd}, $\SU_3(q)$ has a maximal subgroup with structure
$E_q^{1+2}:(q^2-1)$ in the class $\mathcal C_1$, where $E_q^{1+2}$ denotes a
$p$-group of order $q^3$ whose centre and derived subgroup are equal,
elementary abelian of order $q$, and whose quotient by the centre is elementary
abelian of order $q^2$. Since $d=|\bZ(\SU_3(q))|=(q+1,3)$, it follows that
$$
H=P\rtimes C_{(q^2-1)/d}.
$$
Also $P$ is semi-extraspecial (see \cite{fam} for the definition, for
instance), $P'=\bZ(P)$, $|P'|=q$, and $P/P'$ is elementary abelian of order
$q^2$.

By \cite[Table~2]{sf}, the nonidentity $p$-elements of $P$ split into $d$
classes in $P\setminus P'$ and one class in $P'\setminus\{1\}$. By
Lemma~\ref{lem:picky-fusion}, these are also the classes in $H$ contained in
$P^\#$. Let $u\in P\setminus P'$ be in a good class, and let
$z\in P'\setminus\{1\}$ be in the bad class.

\begin{lemma}\label{lem:psu3-normalizer-characters}
The following tables give, for $H=\bN_G(P)$, the values, $p$-parts of degrees,
and multiplicities of the irreducible characters which are nonzero on the good
and bad classes.

If $d=1$, then for a good class the data are
$$
\begin{array}{c|c|c}
\text{value} & p\text{-part of degree} & \text{multiplicity}\\ \hline
1 & 1 & q^2-1\\
-1 & 1 & 1
\end{array}
$$
and for the bad class the data are
$$
\begin{array}{c|c|c}
\text{value} & p\text{-part of degree} & \text{multiplicity}\\ \hline
1 & 1 & q^2-1\\
q^2-1 & 1 & 1\\
-q & q & q+1
\end{array}
$$
If $d=3$, then for a good class the data are
$$
\begin{array}{c|c|c}
\text{value} & p\text{-part of degree} & \text{multiplicity}\\ \hline
1 & 1 & (q^2-1)/3\\
q-r' & 1 & 1\\
-r' & 1 & 2
\end{array}
$$
and for the bad class the data are
$$
\begin{array}{c|c|c}
\text{value} & p\text{-part of degree} & \text{multiplicity}\\ \hline
1 & 1 & (q^2-1)/3\\
(q^2-1)/3 & 1 & 3\\
-q & q & r'
\end{array}
$$
\end{lemma}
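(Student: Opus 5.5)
The plan is to describe $\Irr(H)$ explicitly in three layers, following the filtration $1<P'<P<H$ with $H=P\rtimes C$ and $|C|=(q^2-1)/d$, and then read off the values at $u$ and $z$.

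First, the characters of $H/P\cong C$ are $(q^2-1)/d$ linear characters. Each has value $1$ at $u$ and at $z$, and degree $p$-part $1$.

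Second, consider the characters of $H/P'=(P/P')\rtimes C$ that are nontrivial on $P/P'$. I identify $P/P'$ with the additive group of $\mathbb F_{q^2}$, with $C$ acting through the subgroup of index $d$ of $\mathbb F_{q^2}^\times$; this is read off from the structure $E_q^{1+2}:(q^2-1)$ in \cite[Table~8.5]{bhrd}. The action on $(P/P')^\#$, and dually on $\Irr(P/P')^\#$, is then semiregular with $d$ orbits. Hence $H/P'$ is Frobenius, and there are $d$ characters $\psi_i$ induced from the orbit representatives, each of degree $(q^2-1)/d$. Each $\psi_i$ has value $\psi_i(1)=(q^2-1)/d$ at $z\in P'$.
\begin{itemize}
\item If $d=1$, the unique such character has value $-1$ at $u$, as in the proof of Lemma~\ref{lem:suzuki-normalizer-characters}.
\item If $d=3$, each $\psi_i(u)$ is a sum of additive characters of $\mathbb F_{q^2}$ over a coset of the index-$3$ subgroup, i.e. a cubic Gauss period.
\end{itemize}

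Third, consider the characters lying over nontrivial $\lambda\in\Irr(P')$. Since $P$ is semi-extraspecial, each such $\lambda$ has a unique $\theta_\lambda\in\Irr(P)$ over it. This $\theta_\lambda$ is fully ramified, of degree $q$, vanishes on $P\setminus P'$, and equals $q\lambda$ on $P'$. The group $C$ acts on $P'\cong\mathbb F_q$ through a cyclic group of order $q-1$, which is transitive on $\Irr(P')^\#$; this uses $d\mid q+1$. The stabilizer of $\lambda$ is therefore $P\rtimes C_0$ with $|C_0|=r'$. Since $(|P|,|C_0|)=1$, $\theta_\lambda$ extends to $P\rtimes C_0$, and by Gallagher there are $r'$ extensions. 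Inducing them gives $r'$ characters of $H$ of degree $q(q-1)$, hence with degree $p$-part $q$. Note that $r'=q+1$ when $d=1$.

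Their values are as follows. At $u$ they vanish, because every $H$-conjugate of $u$ lies in $P\setminus P'$, where $\theta_\lambda$ vanishes. At $z$ the value is $\sum_{\mu\in\Irr(P')^\#}q\mu(z)=-q$. A count gives $(q^2-1)/d+d+r'(q-1)$ characters in total, and checking $\sum\chi(1)^2=|H|$ shows the list is complete. This already yields the bad-class tables and the $d=1$ good-class table, with no orthogonality needed.

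The main obstacle is the three values $\psi_i(u)$ when $d=3$. I would first show that they are rational integers. Galois automorphisms act on these sums through multiplication by elements of $\mathbb F_p^\times\le\mathbb F_q^\times$, and $\mathbb F_q^\times$ lies in the index-$3$ subgroup because $3\mid q+1$. Next I would compute $|\bC_H(u)|=q^2$ from the class sizes given in \cite[Table~2]{sf}. Column orthogonality against $1$ then gives $\sum_i\psi_i(u)=-1$, and the second orthogonality relation gives $\sum_i\psi_i(u)^2=q^2-(q^2-1)/3$.

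These two relations do not obviously pin down the values. I would add the orthogonality of the three distinct good columns. The coset representatives of $\mathbb F_{q^2}^\times$ modulo cubes permute both the three orbits and the three good classes cyclically, so the $3\times3$ block of values is circulant. Diagonalizing this circulant matrix, together with integrality, should force the multiset $\{q-r',-r',-r'\}$. One can check directly that these values satisfy both sums. If that argument stalls, I would instead invoke the known evaluation of semiprimitive Gauss periods ($-1$ is a power of $p$ modulo $3$), which gives exactly one period equal to $(-1+2q)/3=q-r'$ and two equal to $-(q+1)/3=-r'$.
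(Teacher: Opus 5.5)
Your proposal is correct. For the linear characters, the $d$ characters of $H/P'$, and the $r'$ characters over nontrivial $\lambda\in\Irr(P')$, it follows the paper's proof. You get $\psi(z)=-q$ from $\psi_{P'}=q\sum_{\mu\neq 1}\mu$ instead of from the induction formula, which is equivalent.

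There is one slip. The total count is $(q^2-1)/d+d+r'$, not $(q^2-1)/d+d+r'(q-1)$, because all $q-1$ nontrivial $\lambda$ form one $H$-orbit and yield the same $r'$ characters. With $r'$ the check $\sum\chi(1)^2=|H|$ does go through. Also, the transitivity of $C$ on $P'\setminus\{1\}$ needs a reason. The paper gets it from the single class in $P'\setminus\{1\}$ in \cite{sf} together with Lemma~\ref{lem:picky-fusion}.

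The genuine divergence is the $d=3$ good class. The paper computes the three values directly, using the fact you invoke only for rationality. Since $\mathbb F_q^\times\le C_e$, each $C_e$-orbit on $\Irr(V)\setminus\{1_V\}$ is a union of $r'$ punctured one-dimensional $\mathbb F_q$-subspaces. Summing a punctured line at $v\neq 0$ gives $q-1$ if the line annihilates $\mathbb F_q v$, and $-1$ otherwise. Exactly one line annihilates $\mathbb F_q v$, so one orbit gives $(q-1)-(r'-1)=q-r'$ and the other two give $-r'$. That argument needs no orthogonality, no class sizes and no circulant symmetry.

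Your route (integrality, orthogonality, circulant symmetry) does close, but the step you leave as ``should force'' needs an arithmetic input you do not state. From $M^{T}M=q^2I-eJ$ with $e=(q^2-1)/3$, the remaining eigenvalues give $|x_0+x_1\omega+x_2\omega^2|=q$. Since $3\mid q+1$ forces $p\equiv 2\pmod 3$, the prime $p$ is inert in $\mathbb Z[\omega]$, so $x_0+x_1\omega+x_2\omega^2=\epsilon q$ for a unit $\epsilon$. Inverting the discrete Fourier transform, using $x_0+x_1+x_2=-1$, gives the multiset $\{(2q-1)/3,-(q+1)/3,-(q+1)/3\}$ when $\epsilon\in\{1,\omega,\omega^2\}$. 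It gives non-integral entries when $\epsilon\in\{-1,-\omega,-\omega^2\}$.

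So your approach amounts to reproving the semiprimitive Gauss period evaluation through the arithmetic of $\mathbb Z[\omega]$, or citing it in your fallback. The paper's $\mathbb F_q$-line argument is the elementary proof of that same evaluation in this case. It is shorter and self-contained.
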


\begin{proof}
Put
$$
        e=\frac{q^2-1}{d}.
$$
The characters of $H$ with $P$ in their kernel are the characters in
$\Irr(H/P)$. Thus there are $e$ such characters, all linear, and they all have
value $1$ on every element of $P$.

Next consider the characters of $H$ with $P'$ in their kernel but not $P$ in
their kernel. Put $V=P/P'$. Then $H/P'$ has the form
$$
        V\rtimes C_e,
$$
where $V$ is elementary abelian of order $q^2$. The complement $C_e=H/P$ acts
fixed-point-freely on $V^\#$. Its orbits on $V\setminus\{1\}$ have length
$e$, and there are $d$ of them. The same is true for the action on
$\Irr(V)\setminus\{1_V\}$. Hence the characters of $H/P'$ lying over
nonprincipal characters of $V$ are obtained by inducing representatives of the
$d$ orbits in $\Irr(V)\setminus\{1_V\}$, and all have degree $e$.

Let $\mu\in\Irr(V)\setminus\{1_V\}$, and let $\chi_\mu=\mu^{H/P'}$. If
$v\in V\setminus\{1\}$, then
$$
        \chi_\mu(v)=\sum\nu(v),
$$
where $\mu$ runs over the $C_e$-orbit of $\mu$. 
If $d=1$, the $C_e$-orbit of 
 $\mu$ is all of $\Irr(V)\setminus\{1_V\}$, and
hence
$$
        \chi_\mu(v)=\sum_{\nu\in\Irr(V)\setminus\{1_V\}}\nu(v)=-1.
$$
This gives the second row in the good-class table for $d=1$.

Assume now that $d=3$. We identify $V$ with the additive group of
$\mathbb F_{q^2}$ in such a way that $C_e$ is a subgroup of
$\mathbb F_{q^2}^{\times}$ of index $3$. Since $3$ divides $q+1$, the subgroup
$\mathbb F_q^\times$ is contained in $C_e$. Thus each $C_e$-orbit on
$\Irr(V)\setminus\{1_V\}$ is a union of
$$
        r'=\frac{q+1}{3}
$$
one-dimensional $\mathbb F_q$-subspaces of $\Irr(V)$.

Fix $v\in V\setminus\{1\}$. The characters in $\Irr(V)$ may be viewed as the
additive characters of the $\mathbb F_q$-vector space $V$. The characters which
are trivial on the subgroup $\langle v\rangle_{\mathbb F_q}=\{av\mid a\in{\mathbb F_q}\}$ form a unique
one-dimensional $\mathbb F_q$-subspace of $\Irr(V)$. If $L$ is a
one-dimensional $\mathbb F_q$-subspace of $\Irr(V)$, then
$$
        \sum_{\nu\in L\setminus\{1_V\}}\nu(v)
        =
        \begin{cases}
        q-1, & \text{if all characters in } L \text{ are trivial on }
                    \langle v\rangle_{\mathbb F_q},\\
        -1, & \text{otherwise.}
        \end{cases}
$$
Therefore, among the three $C_e$-orbits on $\Irr(V)\setminus\{1_V\}$, one gives
the value
$$
        (q-1)+(r'-1)(-1)=q-r',
$$
and the other two give the value
$$
        r'(-1)=-r'.
$$
This gives the second and third rows in the good-class table for $d=3$.

Now let $z\in P'\setminus\{1\}$. Since the characters just considered have
$P'$ in their kernel, their value on $z$ is their degree. Hence, on the bad
class, they contribute the value $q^2-1$ with multiplicity $1$ when $d=1$, and
the value $(q^2-1)/3$ with multiplicity $3$ when $d=3$. Together with the
linear characters, this gives all rows of $p'$-degree in the bad-class tables.

It remains to consider the characters lying over nonprincipal characters of
$P'$. By Theorem~A of \cite{fam}, since $P$ is semi-extraspecial,
$$
        \cd(P)=\{1,q\}.
$$
Let $\lambda\in\Irr(P')\setminus\{1_{P'}\}$. Since $P'$ is central in $P$, every irreducible constituent of $\lambda^P$ lies
over $\lambda$. Moreover $\lambda^P$ has degree $|P:P'|=q^2$ and vanishes on
$P\setminus P'$. If
$\varphi\in\Irr(P)$ lies over $\lambda$, then
$\varphi_{P'}=\varphi(1)\lambda=q\lambda$. Hence the multiplicity of
$\varphi$ in $\lambda^P$ is $q$. It follows from the degree of $\lambda^P$
that there is a unique such character, say $\varphi_\lambda$, and
$$
        \lambda^P=q\varphi_\lambda,
        \qquad
        \varphi_\lambda(1)=q.
$$
In particular, $\varphi_\lambda$ vanishes on $P\setminus P'$.

The group $H/P$ is transitive on $P'\setminus\{1\}$, again by the class
information and Lemma~\ref{lem:picky-fusion}. Thus the inertia subgroup of
$\varphi_\lambda$ in $H$ is $I=P\rtimes T_\lambda$, where
$$
        |T_\lambda|=\frac{|H/P|}{q-1}=\frac{q+1}{d}=r'.
$$
Since $I/P$ is cyclic and $(\varphi_\lambda(1),|I:P|)=1$, the character
$\varphi_\lambda$ extends to $I$ by \cite[Problem~6.17]{isa}. If
$\widetilde\varphi_\lambda$ is such an extension and $\tau\in\Irr(I/P)$, then
Clifford's theorem \cite[Theorem~6.11]{isa} gives that
$(\widetilde\varphi_\lambda\tau)^H$
is irreducible. Varying $\tau$, we obtain exactly $r'$ irreducible characters
of $H$, all of degree
$$
        q|H:I|=q(q-1),
$$
whose $p$-part is $q$.

These characters vanish on $P\setminus P'$. Let $z\in P'\setminus\{1\}$, and
let
$$
        \psi=(\widetilde\varphi_\lambda\tau)^H.
$$
Since $(\varphi_\lambda)_{P'}=q\lambda$, the induction formula gives
$$
\begin{aligned}
\psi(z)
&=\frac{|P|}{|I|}\sum_{t\in H/P}\varphi_\lambda(z^t)  \\
&=\frac{q}{|T_\lambda|}\sum_{t\in H/P}\lambda(z^t).
\end{aligned}
$$
The orbit of $z$ under $H/P$ is $P'\setminus\{1\}$, and each element occurs
with multiplicity $|T_\lambda|$. Since $\lambda$ is nonprincipal,
$$
\sum_{t\in H/P}\lambda(z^t)
=|T_\lambda|\sum_{y\in P'\setminus\{1\}}\lambda(y)
=-|T_\lambda|.
$$
Therefore $\psi(z)=-q$. This gives the rows of degree $p$-part $q$ in the
bad-class tables.
\end{proof}

\begin{lemma}\label{lem:psu3-group-characters}
For $G=\PSU_3(q)$, the values, $p$-parts of degrees, and multiplicities of the
irreducible characters which are nonzero on the good and bad classes are as
follows.

If $d=1$, then for a good class the data are
$$
\begin{array}{c|c|c}
\text{value} & p\text{-part of degree} & \text{multiplicity}\\ \hline
1 & 1 & q(q+1)/2\\
-1 & 1 & q(q-1)/2
\end{array}
$$
and for the bad class the data are
$$
\begin{array}{c|c|c}
\text{value} & p\text{-part of degree} & \text{multiplicity}\\ \hline
1 & 1 & q(q-1)/2\\
-(q-1) & 1 & q\\
2q-1 & 1 & q(q-1)/6\\
-(q+1) & 1 & q(q-1)/3\\
-q & q & 1\\
q & q & q
\end{array}
$$
If $d=3$, then for a good class the data are
$$
\begin{array}{c|c|c}
\text{value} & p\text{-part of degree} & \text{multiplicity}\\ \hline
1 & 1 & q(q+1)/6\\
-1 & 1 & (q-2)(q+1)/6\\
q-r' & 1 & 1\\
-r' & 1 & 2
\end{array}
$$
and for the bad class the data are
$$
\begin{array}{c|c|c}
\text{value} & p\text{-part of degree} & \text{multiplicity}\\ \hline
1 & 1 & (q^2-q+4)/6\\
-(q-1) & 1 & (q-2)/3\\
2r'-1 & 1 & 3\\
2q-1 & 1 & (q-2)(q+1)/18\\
-(q+1) & 1 & (q-2)(q+1)/9\\
-q & q & 1\\
q & q & (q-2)/3
\end{array}
$$
\end{lemma}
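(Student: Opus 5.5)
This lemma is essentially a reading of the known character table of $\PSU_3(q)$. I would follow the same pattern as Lemmas~\ref{lem:psl2-odd-group-characters} and~\ref{lem:suzuki-group-characters}, using the table of $\SU_3(q)$ in Simpson--Frame \cite[Table~2]{sf}.

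First, I fix the notation. In Simpson--Frame the unipotent classes are $C_2$, the transvections (the bad class, represented by $z\in P'$), and $C_3$, which splits into $C_3^{(\ell)}$, $\ell=0,\dots,d-1$, when $d=3$ (the good classes, represented by $u$). The irreducible characters of $\SU_3(q)$ come in the families
\[
\chi_1,\ \chi_{q^2-q},\ \chi_{q^3},\ \chi_{q^2-q+1}^{(u)},\ \chi_{q(q^2-q+1)}^{(u)},\ \chi_{(q-1)(q^2-q+1)}^{(u,v,w)},\ \chi_{q^3+1}^{(u,v)},\ \chi_{(q+1)^2(q-1)}^{(u)},\ \chi_{q^2-1}^{(u)}.
\]
When $d=3$, some of these families split: $\chi_{q^2-q+1}$ produces three characters of degree $(q^2-q+1)/3$, and a single $\chi_{(q-1)(q^2-q+1)}$ splits into three characters of degree $(q-1)(q^2-q+1)/3$.

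Second, I retain only the characters with $\bZ(\SU_3(q))$ in their kernel. This is a congruence condition on the parameters: $u\equiv 0$, or $u+v+w\equiv 0 \pmod d$, and so on. For each family I then count the surviving parameters. The Steinberg character $\chi_{q^3}$ and the $\chi_{(q+1)^2(q-1)}$ family vanish on all nonidentity unipotent elements, so they drop out.

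Third, for each surviving family I read off the value at $z$ and at $u$ and the $p$-part of the degree. Only $\chi_{q^2-q}$ ($p$-part $q$, values $-q$ at $z$ and $0$ at $u$) and the $\chi_{q(q^2-q+1)}$ family ($p$-part $q$, value $q$ at $z$, $0$ at $u$) have degree divisible by $p$. All other families have $p'$ degree, with values among $1,\ -(q-1),\ 2q-1,\ -(q+1)$ at $z$ and $\pm1$ at $u$. For the split characters when $d=3$, the values at the good classes are $q-r'$ or $-r'$, and $2r'-1$ at $z$. Grouping by value then gives the displayed tables.

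The step I expect to be the main obstacle is the exact parameter counts modulo identification. The families are indexed by orbits of parameters under Frobenius-type symmetries, for example unordered triples for $\chi_{(q-1)(q^2-q+1)}$ and pairs for $\chi_{q^3+1}$. Counting those with the kernel condition, and separating out the degenerate parameters where the characters split when $d=3$, gives the multiplicities $q(q-1)/6$, $(q-2)(q+1)/18$, and so on.

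To guard against counting errors, I would check each table with the two column relations, exactly as in the proof of Lemma~\ref{lem:psu3-normalizer-characters}. The first is $\sum\chi(1)\overline{\chi(x)}=0$. The second is $\sum|\chi(x)|^2=|\bC_G(x)|$, with $|\bC_G(z)|=q^3(q+1)/d$ and $|\bC_G(u)|=q^2$ for $d=1$, or $3q^2$ split appropriately when $d=3$. I would also check that the total number of characters equals the class number of $\PSU_3(q)$. These checks pin down any ambiguous multiplicity.

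For the good classes when $d=3$, I would compute the values of the split characters directly. This uses the same argument as in Lemma~\ref{lem:psu3-normalizer-characters}: the sum over the three good classes of a split triple recovers the value of the unsplit character, and conjugation by the diagonal outer automorphism permutes the three constituents. This forces values $q-r'$ on one constituent and $-r'$ on the other two.
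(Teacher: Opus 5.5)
Your overall route is the paper's: read the entries off Simpson--Frame's Table~2, keep what survives in $\PSU_3(q)$, and group by value and $p$-part of degree. The paper does this directly in Simpson--Frame's parameters $r'$, $t''=(t'-1)/6$, $d'=(3-d)/2$, after correcting the misprinted Steinberg entry on $C_2$ to $0$. However, the list of families you intend to read from is wrong in a way that changes the answer.

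\textbf{The main error.} You assert that the family $\chi_{(q+1)^2(q-1)}$ vanishes on all nonidentity unipotent elements. It does not. These are, up to sign, the Deligne--Lusztig characters of the anisotropic torus of order $q^3+1$. They are Ennola dual to the cuspidal characters of $\mathrm{GL}_3(q)$, which take the value $1-q$ on transvections and $1$ on regular unipotents. Accordingly they take the value $-(q+1)$ on $z$ and $-1$ on $u$. For example, the two characters of degree $32$ of $\PSU_3(3)$ take the values $-4$ and $-1$ on the classes $3A$ and $3B$.

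This family is exactly the source of the row $-(q+1)$ in both bad-class tables, with multiplicity $q(q-1)/3$, resp.\ $(q-2)(q+1)/9$. It also supplies $q(q-1)/3$ of the $q(q-1)/2$ characters with value $-1$ on a good class, resp.\ $(q-2)(q+1)/9$ of the $(q-2)(q+1)/6$. If you discard it, nothing in your list produces the value $-(q+1)$ at $z$; your list also contains a family $\chi_{q^2-1}$, which does not exist in $\SU_3(q)$. The $-1$ multiplicities on good classes would then come out too small. Your orthogonality check would reveal that something is off: the $z$-column falls short of $|\bC_G(z)|=q^3(q+1)/d$ by $(q+1)^2$ times the size of this family. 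But the check does not tell you which family was mis-evaluated. The fix is to keep this family with the values just described.

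\textbf{Smaller points.} First, the family $\chi_{q^2-q+1}$ does not split when $d=3$. Twisting by a nontrivial linear character of $\mathrm{GU}_3(q)/\SU_3(q)$ changes the ordered pair of parameters, so there are no characters of degree $(q^2-q+1)/3$. The only splitting visible in $\PSU_3(q)$ is that of the single $\chi_{(q-1)(q^2-q+1)}$ whose parameter triple is a coset of the subgroup of order $3$. It gives the three characters with value $2r'-1$ at $z$.

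Second, for these three characters, your symmetry argument is not enough. Knowing that their values on each good class sum to $-1$ and are permuted cyclically by the diagonal automorphism does not by itself force the pattern $q-r',-r',-r'$. You need the explicit table entries.

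Third, in $\PSU_3(q)$ one has $|\bC_G(u)|=q^2$ for both values of $d$; $3q^2$ is the centralizer order in $\SU_3(q)$, not in $G$.
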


\begin{proof}
We use \cite[Table~2]{sf}. In the notation of Simpson and Frame, in the unitary
case one has
$$
r=q+1,
\qquad
s=q-1,
\qquad
t=q^2-q+1,
\qquad
r'=\frac{q+1}{d},
\qquad
t'=\frac{t}{d},
$$
and
$$
r''=0,
\qquad
t''=\frac{t'-1}{6},
\qquad
d'=\frac{3-d}{2}.
$$
The class $C_2$ is the bad class and the classes $C_3^{(\ell)}$ are the good
classes.

One entry in \cite[Table~2]{sf} appears to be a misprint: the entry in the
column $X_{q^3}$ and row $C_2$ is printed as $\delta q$. This column is the
Steinberg character, so its value on the nontrivial unipotent class $C_2$
should be $0$. We use
$$
X_{q^3}(C_2)=0.
$$

In the following lists, a pair attached to a character $\chi$ means
$(\chi(x),\chi(1)_p)$, where $x$ is an element in the class under
consideration.

For a good class, the contributing characters in \cite[Table~2]{sf} give:
$$
(1,1) \text{ with multiplicity } r'+3t''-d',
$$
$$
(-1,1) \text{ with multiplicity } 3t'',
$$
together with the contribution of the characters denoted $X_{s'r'}$:
$$
(q-r',1) \text{ once},
\qquad
(-r',1) \text{ with multiplicity } d-d'-1.
$$
If $d=1$, then $d'=1$, $r'=q+1$, and $t''=q(q-1)/6$; the family $X_{s'r'}$
does not occur. Thus the multiplicities are
$$
r'+3t''-d'=\frac{q(q+1)}{2},
\qquad
3t''=\frac{q(q-1)}{2}.
$$
If $d=3$, then $d'=0$ and $t''=(q-2)(q+1)/18$, giving
$$
r'+3t''=\frac{q(q+1)}{6},
\qquad
3t''=\frac{(q-2)(q+1)}{6},
$$
and $X_{s'r'}$ contributes $(q-r',1)$ once and $(-r',1)$ twice. This gives the
tables for good classes.

For the bad class, the characters of $p'$-degree in \cite[Table~2]{sf} give:
$$
(1,1) \text{ with multiplicity } 1+3t''-d',
$$
$$
(-(q-1),1) \text{ with multiplicity } r'-1,
$$
$$
(2r'-1,1) \text{ with multiplicity } d-d',
$$
$$
(2q-1,1) \text{ with multiplicity } t'',
$$
$$
(-(q+1),1) \text{ with multiplicity } 2t''.
$$
The remaining characters nonzero on the bad class are $X_{qs}$ and the
characters $X_{qt}^{(u)}$; they contribute
$$
(-q,q) \text{ once},
\qquad
(q,q) \text{ with multiplicity } r'-1.
$$
Substituting $d=1$ and $d=3$ gives exactly the two bad-class tables above.
\end{proof}

\begin{proposition}\label{prop:psu3-ti}
The conclusion of Theorem~\ref{thm:ti-sylow-main} holds for $G=\PSU_3(q)$.
\end{proposition}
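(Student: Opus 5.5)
The plan is to read everything off Lemmas~\ref{lem:psu3-normalizer-characters} and~\ref{lem:psu3-group-characters}, following the pattern of the proof of Proposition~\ref{prop:suzuki-picky}. The key structural remark is the following. By \cite[Corollary~4.20]{navmc}, every character of $p'$-degree of $G$ or $H$ is nonzero on every $p$-element. The tables show two further facts. First, no character whose degree has $p$-part $q$ is nonzero on a good class. Second, the Steinberg character (of degree $q^3$) vanishes on $P^\#$. Hence, on both sides, $\Irr^{u}$ is exactly the set of $p'$-degree characters, and $\Irr^{z}=\Irr^{P^\#}$ is the $p'$-degree characters together with the characters of degree $p$-part $q$ that are nonzero on $z$.

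First I check the counts. For $d=1$ there are $q^2$ characters of $p'$-degree on each side, and $q+1$ characters of degree $p$-part $q$ in $H$ against $1+q$ in $G$. For $d=3$ the $p'$-degree counts are $(q^2-1)/3+3$ on both sides, and the $p$-part $q$ counts are $r'$ against $1+(q-2)/3=r'$. Adding the rows of each table gives the same totals, so the bijection can be built in two blocks.

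The bijection $f$ sends the $p'$-degree characters of $G$ to those of $H$, and the characters of $G$ of degree $p$-part $q$ nonzero on $z$ to the $r'$ characters of $H$ of degree $p$-part $q$. This already gives condition (1) of Conjecture~\ref{conj:global-picky} and condition (3) for every $x\in P^\#$. All values occurring in the tables on $P^\#$ are rational integers, so condition (2) is automatic; this proves the global picky conjecture.

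For the strong statement on the good classes, I refine the $p'$-block. For $d=1$, send one character of $G$ with value $-1$ on the good class to the unique character of $H$ with value $-1$. Send the remaining $q^2-1$ characters (values $\pm1$) to the linear characters of $H$.

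For $d=3$ there are $d$ good classes, and I must match simultaneously on all of them. This is the main obstacle. The three characters $X_{s'r'}$ of $G$ take the values $q-r',-r',-r'$, in an order depending on the class $C_3^{(\ell)}$. Likewise, the three characters of $H$ induced from the three orbits on $\Irr(V)\setminus\{1_V\}$ take the values $q-r',-r',-r'$, in an order depending on the class. Labelling each triple by the good class on which it takes the value $q-r'$ should give a matching that agrees on all three classes. I would verify this directly from the columns $C_3^{(\ell)}$ in \cite[Table~2]{sf}. Since the pattern is determined by which single entry is $q-r'$, only this combinatorial identification needs checking, and it uses Lemma~\ref{lem:picky-fusion} to identify the $G$-classes with the $H$-classes. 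All other $p'$-degree characters of $G$ have values $\pm1$ on every good class and are sent to the linear characters of $H$, which take the value $1$ on $P$.

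Finally I show the failure on the bad class. Any bijection preserving degree $p$-parts maps $p'$-degree characters to $p'$-degree characters. $H$ has a $p'$-degree character with value $q^2-1$ on $z$ (for $d=1$), or $(q^2-1)/3$ (for $d=3$). The absolute values of the $p'$-degree character values of $G$ at $z$ lie in $\{1,\,q-1,\,2r'-1,\,2q-1,\,q+1\}$. For $d=1$ and $q\ge3$ we have $q^2-1>2q-1$ and $q^2-1>q+1$. For $d=3$, equating $(q^2-1)/3$ with each candidate gives one of $q^2-3q+2=0$, $q^2-2q=0$, $q^2-6q+2=0$, $q^2-3q-4=0$. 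Their roots are $1,2$; $0,2$; none integral; and $4,-1$. But $q\ge 3$, $d=3$ forces $3\mid q+1$, so $q\neq 4$. Hence no value matches even up to sign, and the strong picky conjecture fails on $P'\setminus\{1\}$.
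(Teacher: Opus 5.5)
Your proposal is correct and is essentially the paper's proof. Everything is read off Lemmas~\ref{lem:psu3-normalizer-characters} and~\ref{lem:psu3-group-characters}. The $p'$-degree characters and the characters of degree $p$-part $q$ are matched blockwise, using equal counts and the rationality of all values. The strong form is then refuted on $z$ by a value with no counterpart up to sign. The paper's witness is $-(q-1)$, which occurs in $G$ while no value of absolute value $q-1$ occurs in $H$. It therefore needs neither the degree constraint nor your quadratic case check, in which you also skipped the trivial candidate $1$.

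One adjustment: the simultaneous matching over the three good classes for $d=3$, which you call the main obstacle, is not required. The strong picky conjecture is asserted element by element, and for each fixed good class the two tables already agree up to sign. So the step you leave unverified can simply be dropped. It is true anyway: orthogonality of two good-class columns of $G$, together with $(q-r')^2+2r'^2=(2q^2+1)/3>(q^2-1)/3$, forces the entries $q-r'$ into different rows.
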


\begin{proof}
For a good class, Lemmas~\ref{lem:psu3-normalizer-characters} and
\ref{lem:psu3-group-characters} show that the displayed pairs agree up to sign.
Thus the strong picky conjecture holds for the elements of $P\setminus P'$.

For the bad class, compare the same two lemmas. The characters of $p'$-degree
can be matched preserving the $p$-parts of degrees and fields of values, since
all the displayed values are rational and the total multiplicities agree. The
characters with $p$-part of degree equal to $q$ have values
$$
        -q \text{ once},
        \qquad
        q \text{ with multiplicity } r'-1
$$
for $G$, and value $-q$ with multiplicity $r'$ for $H$. Thus the fields of
values and the $p$-parts of degrees are preserved, and the global picky
conjecture holds.

The strong picky conjecture fails on the bad class.  For instance, the value
$-(q-1)$ occurs for $G$, whereas no value with absolute value $q-1$ occurs for
$H$.
\end{proof}

\subsection{The small Ree groups ${}^2G_2(q)$}

Let $G={}^2G_2(q)$, where
$$
        q=3^{2k+1}=3m^2,
        \qquad m=3^k,
        \qquad q>3,
$$
and let $P\in \Syl_3(G)$. Put $H=\bN_G(P)$.

We shall use Ward's notation for the classes of $3$-elements in $H$, as in
his character table for $G$ \cite[pp.~87--88]{war}. Thus the nonidentity
$3$-elements contained in $P$ are represented by the six classes
$$
        X\in \bZ(P)\setminus\{1\},
        \qquad
        T,\ T^{-1}\in P'\setminus \bZ(P),
        \qquad
        Y,\ YT,\ YT^{-1}\in P\setminus P'.
$$
Ward proves that $P$ has order $q^3$, that $\bZ(P)$ is elementary abelian of
order $q$, and that $P$ contains a normal elementary abelian subgroup
$P'=\Phi(P)$ of order $q^2$ containing $\bZ(P)$; moreover
$$
        H=PW,
        \qquad W\cong C_{q-1}
$$
\cite[p.~63]{war}.

For the normalizer $H$, unlike in the preceding families, we shall use directly
the character table computed by van der Waall for the group $\mathfrak R_\sigma$
isomorphic to $H$ \cite[Section~2]{vdw}. Van der Waall uses a different
notation for the conjugacy classes. Structurally, the correspondence is
$$
\begin{array}{c|c}
\text{Ward} & \text{van der Waall} \\ \hline
X & (1,0,0,1)\\
\{T,T^{-1}\} & \{(1,0,1,0),(1,0,-1,0)\}\\
\{Y,YT,YT^{-1}\} &
\{(1,1,0,0),(1,1,x,0),(1,1,-x,0)\}.
\end{array}
$$
We choose the representatives so that the values in van der Waall's table
agree with the values in Ward's notation used below; with this convention,
$$
\begin{gathered}
T=(1,0,1,0),\quad T^{-1}=(1,0,-1,0),\quad Y=(1,1,0,0),\\
YT=(1,1,x,0),\quad YT^{-1}=(1,1,-x,0).
\end{gathered}
$$
We shall use Ward's notation throughout.

We shall use Ward's character table for $G$ \cite[pp.~87--88]{war} and van der
Waall's character table for $H$ \cite[Section~2]{vdw}. Since $P$ is TI, every
element of $P^\#$ is picky. Hence, by Lemma~\ref{lem:picky-fusion}, the
$G$-classes and the $H$-classes contained in $P^\#$ coincide.

\begin{lemma}\label{lem:ree-normalizer-values}
Let $H=\bN_G(P)$. The following tables give the values, $3$-parts of degrees,
and multiplicities of the irreducible characters of $H$ which are nonzero on
the classes represented by $X$, $T$, $T^{-1}$, $Y$, $YT$ and $YT^{-1}$.

For the class $X$:
$$
\begin{array}{c|c|c}
\text{value} & 3\text{-part of degree} & \text{multiplicity}\\ \hline
1 & 1 & q-1\\
q-1 & 1 & 1\\
m(q-1) & m & 2\\
m(q-1)/2 & m & 4\\
-q & q & 1
\end{array}
$$
For the class $T$, put
$$
b=-\frac12+\frac{i\,m\sqrt 3}{2}.
$$
Then:
$$
\begin{array}{c|c|c}
\text{value} & 3\text{-part of degree} & \text{multiplicity}\\ \hline
1 & 1 & q-1\\
q-1 & 1 & 1\\
2mb & m & 1\\
2m\bar b & m & 1\\
mb & m & 2\\
m\bar b & m & 2
\end{array}
$$
and the table for $T^{-1}$ is obtained by complex conjugation. For the class
$Y$:
$$
\begin{array}{c|c|c}
\text{value} & 3\text{-part of degree} & \text{multiplicity}\\ \hline
1 & 1 & q-1\\
-1 & 1 & 1\\
m & m & 4\\
-m & m & 2
\end{array}
$$
Finally, put
$$
a=-\frac12+\frac{i\sqrt 3}{2}.
$$
For the class $YT$:
$$
\begin{array}{c|c|c}
\text{value} & 3\text{-part of degree} & \text{multiplicity}\\ \hline
1 & 1 & q-1\\
-1 & 1 & 1\\
m\bar a & m & 2\\
ma & m & 2\\
-ma & m & 1\\
-m\bar a & m & 1
\end{array}
$$
and the table for $YT^{-1}$ is obtained by complex conjugation.
\end{lemma}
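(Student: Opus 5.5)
The plan is to read off the tables directly from van der Waall's character table of $H\cong\mathfrak R_\sigma$ \cite[Section~2]{vdw}, checking the bookkeeping with orthogonality relations and the structure of $H=PW$. I would proceed layer by layer along the normal series $1<\bZ(P)<P'<P<H$, as in the $\PSU_3(q)$ case.

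First, the $q-1$ linear characters of $H/P\cong W$ have value $1$ on all of $P$. Next, $H/P'$ is a Frobenius group with elementary abelian kernel $P/P'$ of order $q$ and complement $C_{q-1}$ acting transitively on the nonidentity elements. Hence it has a single nonlinear character of degree $q-1$, with value $q-1$ on $X,T,T^{-1}$ and value $-1$ on $Y,YT,YT^{-1}$. This accounts for the rows of $3$-part $1$.

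The remaining characters of $3$-part $m$ and $q$ lie over nontrivial characters of $P'/\bZ(P)$ or of $\bZ(P)$. For these I would take the list from van der Waall: degrees $m(q-1)$ (two characters), $m(q-1)/2$ (four characters), and one character with $3$-part $q$, namely degree $q(q-1)$. This character induces from $P$ a character over $\bZ(P)$ and vanishes off $\bZ(P)$. I would then transcribe their values on the six classes using the dictionary between Ward's and van der Waall's class labels fixed before the lemma.

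For consistency checks, I would verify the following:
\begin{itemize}
\item column orthogonality between $1$ and each class, which for $X$ gives $(q-1)+(q-1)^2+2m^2(q-1)^2+m^2(q-1)^2-q^2(q-1)=0$, using $3m^2=q$;
\item the second orthogonality relation $\sum|\chi(g)|^2=|\bC_H(g)|$, with centralizer orders computed from the class sizes;
\item that the values on $T^{-1}$ and $YT^{-1}$ are complex conjugates of those on $T$ and $YT$, since these classes are inverse to each other.
\end{itemize}
The sums $2b+2\bar b+\dots$ should reproduce the required zeros; for instance, on $T$ we need $(q-1)-(q-1)+m(q-1)(b+\bar b)+\tfrac{m(q-1)}{2}\cdot 2(b+\bar b)\cdot\ldots$ to be consistent. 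The character of degree $q(q-1)$ vanishes on $T$, $Y$ and $YT$, which explains why it appears only in the $X$-table.

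The main obstacle is the matching of class representatives between the two sources. In particular, the labelling of the classes $T$ versus $T^{-1}$ and $YT$ versus $YT^{-1}$ determines whether entries appear as $b$ or $\bar b$, and as $a$ or $\bar a$. If the direct transcription is ambiguous, I would pin it down by orthogonality between the $T$ and $T^{-1}$ columns, together with the choice of representatives made above, just as the analogous signs were determined in Lemma~\ref{lem:suzuki-normalizer-characters}.
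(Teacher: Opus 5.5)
Your proposal is correct and takes essentially the paper's approach. The paper reads the values of the $q-1$ linear characters, $\rho$, $\psi_1,\dots,\psi_6$ and $\psi$ directly off van der Waall's table through the fixed class dictionary; your derivation of the $3'$-degree rows from the Frobenius quotient $H/P'\cong\mathbb F_q\rtimes C_{q-1}$ and your orthogonality checks are harmless extras.

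One small slip is in the $T$-column check. There the character of degree $q-1$ contributes $+(q-1)^2$, not $-(q-1)$, so the relation to verify is $(q-1)+(q-1)^2+3m^2(q-1)(b+\bar b)=0$, which holds because $b+\bar b=-1$ and $3m^2=q$. Also, orthogonality alone cannot distinguish $T$ from $T^{-1}$, so that labelling rests purely on the convention fixed before the lemma.
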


\begin{proof}
We read the character values from van der Waall's character table for the group
$\mathfrak R_\sigma$, which is isomorphic to $H$, translating the class notation
as fixed above. The degrees of the characters used here are read from
\cite[Theorem~10, p.~165, and p.~170]{vdw}.

The $q-1$ linear characters have $P$ in their kernel, and hence have value $1$
on all elements of $P$. The character denoted by $\rho$ in van der Waall's table
has degree $q-1$; it has value $q-1$ on $X$, $T$ and $T^{-1}$, and value $-1$
on $Y$, $YT$ and $YT^{-1}$.

The characters $\psi_1,\psi_2$ have degree $m(q-1)$, and the characters
$\psi_3,\psi_4,\psi_5,\psi_6$ have degree $m(q-1)/2$. These degrees have
$3$-part equal to $m$. Their values on the six classes are those displayed in
van der Waall's table, after translating his notation into Ward's notation.
Finally, the character denoted by $\psi$ has degree $q(q-1)$, whose $3$-part is
$q$; it has value $-q$ on the class $X$ and value $0$ on the remaining classes
in $P^\#$. This gives the displayed tables.
\end{proof}

\begin{lemma}\label{lem:ree-group-values}
For $G={}^2G_2(q)$, the irreducible characters which are nonzero on the classes
represented by $X$, $T$, $T^{-1}$, $Y$, $YT$ and $YT^{-1}$ have the following
values, $3$-parts of degrees and multiplicities.

For the class $X$:
$$
\begin{array}{c|c|c}
\text{value} & 3\text{-part of degree} & \text{multiplicity}\\ \hline
1 & 1 & (q-1)/2\\
-(q-1) & 1 & 1\\
2q-1 & 1 & (q-5)/2\\
-(q+1+3m) & 1 & 1\\
-(q+1-3m) & 1 & 1\\
-(q+m)/2 & m & 2\\
(q-m)/2 & m & 2\\
-m & m & 2\\
q & q & 1
\end{array}
$$
For the class $T$:
$$
\begin{array}{c|c|c}
\text{value} & 3\text{-part of degree} & \text{multiplicity}\\ \hline
1 & 1 & (q+1)/2\\
-1 & 1 & (q-5)/2\\
-3m-1 & 1 & 1\\
3m-1 & 1 & 1\\
(-m+im^2\sqrt 3)/2 & m & 2\\
(-m-im^2\sqrt 3)/2 & m & 2\\
-m+im^2\sqrt 3 & m & 1\\
-m-im^2\sqrt 3 & m & 1
\end{array}
$$
and the table for $T^{-1}$ is obtained by complex conjugation. For the class
$Y$:
$$
\begin{array}{c|c|c}
\text{value} & 3\text{-part of degree} & \text{multiplicity}\\ \hline
1 & 1 & (q+1)/2\\
-1 & 1 & (q-1)/2\\
m & m & 4\\
-m & m & 2
\end{array}
$$
For the class $YT$:
$$
\begin{array}{c|c|c}
\text{value} & 3\text{-part of degree} & \text{multiplicity}\\ \hline
1 & 1 & (q+1)/2\\
-1 & 1 & (q-1)/2\\
m\bar a & m & 2\\
ma & m & 2\\
-ma & m & 1\\
-m\bar a & m & 1
\end{array}
$$
and the table for $YT^{-1}$ is obtained by complex conjugation.
\end{lemma}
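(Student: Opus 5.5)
The approach mirrors the proofs of Lemmas~\ref{lem:psl2-odd-group-characters}, \ref{lem:suzuki-group-characters} and \ref{lem:psu3-group-characters}: read the values off Ward's character table of ${}^2G_2(q)$ \cite[pp.~87--88]{war}, using the class labels $X,T,T^{-1},Y,YT,YT^{-1}$ fixed above. Since $P$ is TI, every element of $P^\#$ is picky, so by Lemma~\ref{lem:picky-fusion} these six $G$-classes are exactly the classes of $G$ meeting $P^\#$ and there is no ambiguity in the labelling. I will go through Ward's list of irreducible characters family by family. These are the trivial character, the Steinberg character $\xi_3$ of degree $q^3$, the unipotent characters $\xi_2,\ldots,\xi_{10}$ (several of degree divisible by $m$), and the families $\eta_r,\eta_r',\eta_t,\eta_t',\eta_i,\eta_i'$ parametrized by characters of the tori of orders $(q-1)/2$, $q+1\pm 3m$, etc. For each family I record three things: its number of members, the $3$-part of its degree, and its values on the six classes.

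The key steps are as follows. First, I compute the $3$-parts of the degrees. The Steinberg character contributes $q$. The unipotent characters of degrees $m(q-1)(q+1\pm3m)/2$ and $m(q^2-1)$ type contribute $m$. All torus-parametrized families and the remaining unipotent characters have degrees prime to $3$. Second, I record the values. The Steinberg character vanishes on the classes $T,T^{\pm1},Y,YT^{\pm1}$ but has value $q$ on $X$, giving the row $(q,q,1)$ in the $X$-table. The families of $3'$-degree give the integer rows, with multiplicities such as $(q-5)/2$, $(q-1)/2$ and $(q+1)/2$. These multiplicities come from counting the torus characters in each family, which is $(q-3)/4$, $(q-3)/24$-type counts combined appropriately; their sum should be checked against the total class number $q+8$. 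Third, I collect equal (value, $3$-part) pairs and sum their multiplicities. Characters vanishing on a class, such as the families of degree divisible by $q+1$ on the classes $T$ and $YT$, are simply omitted.

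As a consistency check, I will verify each column using second orthogonality: $\sum|\chi(g)|^2=|\bC_G(g)|$, where $|\bC_G(X)|=q^3$, $|\bC_G(T)|=2q^2$, $|\bC_G(Y)|=3q$ and $|\bC_G(YT)|=3q$ (these follow from the class sizes in Ward). I will also check orthogonality against the column of $1$. This catches misprints, as happened for \cite[Table~2]{sf}. The tables for $T^{-1}$ and $YT^{-1}$ follow by complex conjugation, because these classes are inverse to $T$ and $YT$ respectively.

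The main obstacle is bookkeeping in Ward's table. The irrational entries on $T$ and $YT$ come from the characters $\xi_5,\ldots,\xi_{10}$, and their precise values involve $\pm i m\sqrt{3}$ and cube roots of unity written in Ward's own parametrization. I must match Ward's representatives with the choice of $T$ versus $T^{-1}$ and $YT$ versus $YT^{-1}$ made above, so that the signs agree with the convention already used in Lemma~\ref{lem:ree-normalizer-values}. I would settle this first, by computing the orthogonality between the columns $T$ and $T^{-1}$, and between $YT$ and $YT^{-1}$, and choosing the labelling that is consistent with the $H$-side tables via Corollary~\ref{cor:ratreal}.
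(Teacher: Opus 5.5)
Your route is the paper's: read Ward's table family by family, then collapse equal (value, $3$-part) pairs. However, several claims you commit to are wrong, and the consistency check you propose would not confirm the statement.

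\textbf{The degree and vanishing bookkeeping.}
The Steinberg character $\xi_3$ has degree $q^3$ and vanishes on every nonidentity $3$-element. This is exactly why $\Irr^{\mathcal P}(G)=\Irr(G)\setminus\{\xi_3\}$ in Proposition~\ref{prop:ree}.

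The row $(q,q,1)$ on $X$ comes from $\xi_4$, of degree $q(q^2-q+1)$, which vanishes on $T^{\pm1}$, $Y$ and $YT^{\pm1}$. So your claim that every character other than $\xi_3,\xi_5,\dots,\xi_{10}$ has $3'$-degree is false.

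Likewise, the families of degree divisible by $q+1$ (for instance $\eta_r,\eta_r'$, of degree $q^3+1$) do not vanish on $T$ or $YT$. Only $\xi_3$ and $\xi_4$ vanish there, which is what the row totals $q+6$ in those tables require.

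\textbf{The multiplicity step.}
This is the more serious problem, and you leave it as ``combined appropriately''. The family sizes and values are:
\begin{itemize}
\item $\eta_r,\eta_r'$: $(q-3)/4$ members each, value $1$ on all of $P^\#$.
\item Two families of degree $(q-1)(q^2-q+1)$: $(q-3)/24$ and $(q-3)/8$ members, value $2q-1$ on $X$ and $-1$ on $T$.
\item Families of degrees $(q^2-1)(q+1+3m)$ and $(q^2-1)(q+1-3m)$: $(q-3m)/6$ and $(q+3m)/6$ members, values $-(q+1+3m)$ and $-(q+1-3m)$ on $X$, and $-3m-1$ and $3m-1$ on $T$.
\end{itemize}
The values in the last two items are minus the Green functions of the tori of orders $q+1$, $q+1-3m$ and $q+1+3m$.

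So in the $X$-table the rows $2q-1$, $-(q+1+3m)$, $-(q+1-3m)$, and in the $T$-table the rows $-1$, $-3m-1$, $3m-1$, have multiplicities $(q-3)/6$, $(q-3m)/6$, $(q+3m)/6$. They are not $(q-5)/2$, $1$, $1$.

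The totals agree, so a class-number check cannot detect this, but your second-orthogonality check does:
\begin{itemize}
\item $X$ is picky, so $\bC_G(X)\le H$, and Lemma~\ref{lem:ree-normalizer-values} gives $|\bC_G(X)|=q^3$. Yet the displayed row $2q-1$ alone contributes $\frac{q-5}{2}(2q-1)^2>q^3$.
\item Similarly, the displayed $T$-column gives $\sum|\chi(T)|^2=q^2+8q\neq 2q^2=|\bC_G(T)|$.
\end{itemize}

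The paper's proof reaches the displayed numbers by treating $\eta_t,\eta_t'$ as families of $(q-5)/4$ characters, which is not an integer since $q\equiv 3\pmod 8$. It also treats $\eta_i^{\pm}$ as single characters.

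So carrying out your plan correctly proves corrected $X$, $T$, $T^{-1}$ tables, not the lemma as displayed. The $Y$ and $YT$ tables are right. Proposition~\ref{prop:ree} survives, because it only uses the totals and the occurrence of the values $2q-1$, $-3m-1$ and $3m-1$.
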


\begin{proof}
The values are read from Ward's character table \cite[pp.~87--88]{war}. The
values on $T^{-1}$ and $YT^{-1}$ are the complex conjugates of the values on
$T$ and $YT$, respectively. The $3$-parts of the degrees are obtained from
Ward's degree formulas in the same table: $\xi_4(1)_3=q$, the characters
$\xi_5,\ldots,\xi_{10}$ have degree $3$-part equal to $m$, and the remaining
characters displayed here have degree prime to $3$. The multiplicities are
read directly from Ward's table: the rows $\eta_r,\eta'_r$ represent families
of $(q-3)/4$ characters, while $\eta_t,\eta'_t$ represent families of
$(q-5)/4$ characters, and $\eta_i^-$ and $\eta_i^+$ are individual characters.
\end{proof}

\begin{proposition}\label{prop:ree}
The conclusion of Theorem~\ref{thm:ti-sylow-main} holds for
$G={}^2G_2(q)$.
\end{proposition}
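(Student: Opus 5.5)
The plan is to compare, class by class, the tables of Lemma~\ref{lem:ree-normalizer-values} for $H$ and Lemma~\ref{lem:ree-group-values} for $G$. Recall that the $G$-classes and $H$-classes in $P^\#$ coincide. In both groups I will split the characters nonvanishing somewhere on $P^\#$ into three families according to the $3$-part of the degree: $1$, $m$ or $q$.

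\emph{Global bijection.} First I check that each family has the same size in $G$ and in $H$.
\begin{itemize}
\item The $3'$-degree family has $q$ members in both groups. In $H$ these are the $q-1$ linear characters and $\rho$. In $G$ the count from the $X$-table is $(q-1)/2+1+(q-5)/2+1+1=q$.
\item The family of $3$-part $m$ has $6$ members in both: $\psi_1,\dots,\psi_6$ in $H$ and $\xi_5,\dots,\xi_{10}$ in $G$.
\item The family of $3$-part $q$ has one member in both: $\psi$ in $H$ and $\xi_4$ in $G$.
\end{itemize}
Next I read off from Ward's and van der Waall's tables which characters vanish where. Every $3'$-degree character and every character of $3$-part $m$ is nonzero on all six classes, in both groups; this is visible from the multiplicities, which are the same in each column. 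The unique character of $3$-part $q$ is nonzero only on $X$, with value $-q$ for $\psi$ and $q$ for $\xi_4$. Hence any degree-part-preserving bijection $f$ that sends $\xi_4\mapsto\psi$ automatically satisfies condition~(3) of Conjecture~\ref{conj:global-picky} for every $x\in P^\#$, and the total counts agree. For the fields of values:
\begin{itemize}
\item All $3'$-degree values in both groups lie in $\mathbb Q$ on every class. The displayed values are integers on $X$, $T$ and $Y$, and equal $\pm1$ or $q-1$ on the others.
\item The $3$-part-$m$ values generate $\mathbb Q$ on $X$ and $Y$, and $\mathbb Q(\sqrt{-3})$ on $T$, $T^{-1}$, $YT$ and $YT^{-1}$, in both groups.
\end{itemize}
So field preservation reduces to matching the six characters of $3$-part $m$ compatibly.

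The delicate step, which I expect to be the main obstacle, is to choose the matching $\xi_{4+j}\mapsto\psi_{\pi(j)}$ \emph{simultaneously} for all six columns. I would fix it using the good classes. On $Y$ both groups show $m$ four times and $-m$ twice. On $YT$ both show $m\bar a,ma$ twice each and $-ma,-m\bar a$ once each. I would use the explicit character tables to pair each $\xi_{4+j}$ with a $\psi_{\pi(j)}$ having identical values on $Y$ and $YT$, and hence, by complex conjugation, on $YT^{-1}$. That the required joint pattern of $(Y,YT)$-values agrees in the two tables is the point to verify by direct reading. Once $\pi$ is fixed, condition~(2) on $X$, $T$ and $T^{-1}$ is automatic from the field computation above. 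On the $3'$ family I map the characters with value $-1$ on $Y$ to $\rho$ and to linear characters, so that the values on the good classes agree up to sign. Here $\rho$ has value $-1$ on $Y$, $YT$ and $YT^{-1}$, and every $3'$-degree character of $G$ has value $\pm1$ there.

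\emph{Strong picky.} The construction above gives the strong picky conjecture on the good classes $Y$, $YT$ and $YT^{-1}$, all of which lie in $P\setminus P'$. On the bad classes it fails by absolute-value counting, exactly as in Proposition~\ref{prop:suzuki-picky}:
\begin{itemize}
\item On $X$, $G$ has $3'$-degree characters with value $2q-1$, while every $3'$-degree character of $H$ has value $1$ or $q-1$ there.
\item On $T$ (and on $T^{-1}$ by conjugation), $G$ has the $3'$-degree value $3m-1$, whose absolute value is neither $1$ nor $q-1$, the only $3'$-degree values of $H$ there.
\end{itemize}
Hence no bijection preserving the $3$-parts of degrees can preserve values up to sign on the classes of $P'\setminus\{1\}$.
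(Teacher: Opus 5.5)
Your proposal is correct and is essentially the paper's argument: the same split by $3$-part of degree, $\xi_4\mapsto\psi$, any bijection on the $3'$-degree characters, a matching of $\xi_5,\dots,\xi_{10}$ with $\psi_1,\dots,\psi_6$ read off from Ward's and van der Waall's tables (the paper writes it out as $\xi_5\mapsto\psi_3$, $\xi_6\mapsto\psi_4$, $\xi_7\mapsto\psi_5$, $\xi_8\mapsto\psi_6$, $\xi_9\mapsto\psi_2$, $\xi_{10}\mapsto\psi_1$), and the same obstructions $2q-1$ on $X$ and $3m-1$ on $T$. The step you flag as delicate is not a real obstacle: any matching preserves fields, and the strong picky conjecture is only required one element at a time, so the columnwise multisets already suffice; moreover, the simultaneous value-preserving matching you want does exist, since in both groups the two characters with value $-m$ on $Y$ are exactly those with values $-ma$ and $-m\bar a$ on $YT$.
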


\begin{proof}
Let $\mathcal P=P^\#$. Ward's table shows that the only irreducible character
of $G$ vanishing on all nonidentity $3$-elements is $\xi_3$, while $\xi_4$ is
nonzero on $X$ and vanishes on every noncentral $3$-class. Hence
$$
\Irr^{\mathcal P}(G)=\Irr(G)\setminus\{\xi_3\},
\qquad
\Irr^X(G)=\Irr^{\mathcal P}(G),
$$
and, for
$$
x\in\{T,T^{-1},Y,YT,YT^{-1}\},
$$
one has
$$
\Irr^x(G)=\Irr(G)\setminus\{\xi_3,\xi_4\}.
$$
For $H$, Lemma~\ref{lem:ree-normalizer-values} gives
$$
\Irr^{\mathcal P}(H)=\Irr(H),
\qquad
\Irr^X(H)=\Irr(H),
$$
and
$$
\Irr^x(H)=\Irr(H)\setminus\{\psi\}
$$
for every noncentral nonidentity $3$-element $x\in P$.

Define a bijection
$$
f:\Irr^{\mathcal P}(G)\longrightarrow \Irr^{\mathcal P}(H)
$$
by sending
$$
\xi_4\mapsto \psi,
$$
and
$$
\xi_5\mapsto \psi_3,
\quad
\xi_6\mapsto \psi_4,
\quad
\xi_7\mapsto \psi_5,
\quad
\xi_8\mapsto \psi_6,
\quad
\xi_9\mapsto \psi_2,
\quad
\xi_{10}\mapsto \psi_1.
$$
The remaining $q$ characters, all of degree prime to $3$, are mapped
bijectively to the $q-1$ linear characters of $H$ together with $\rho$.
For the bad classes $X$, $T$ and $T^{-1}$, this bijection need not preserve
values up to sign. The displayed tables show, however, that it preserves the
$3$-parts of degrees and the fields of values on these classes. For the good
classes $Y$, $YT$ and $YT^{-1}$, the same tables show that the bijection
preserves values up to sign. In all cases it maps $\Irr^x(G)$ onto
$\Irr^x(H)$. Hence the global picky conjecture holds, and the strong picky
conjecture holds for all elements of $P\setminus P'$.

It remains to prove failure of the strong form on the bad classes. For $X$, the
only values of characters of $H$ of degree prime to $3$ are
$$
1\qquad\text{and}\qquad q-1,
$$
whereas characters of $G$ of degree prime to $3$ take the value $2q-1$ on $X$.
Since
$$
        2q-1\neq \pm1,
        \qquad
        2q-1\neq \pm(q-1),
$$
there is no bijection preserving the $3$-parts of degrees and values up to sign
on $X$.

For $T$, the only values of characters of $H$ of degree prime to $3$ are again
$$
1\qquad\text{and}\qquad q-1.
$$
But the characters $\eta_i^-$ and $\eta_i^+$ of $G$ take the values
$$
        -3m-1
        \qquad\text{and}\qquad
        3m-1
$$
on $T$. Since $m\geq 3$, neither $3m-1$ nor $3m+1$ is equal to $1$ or to
$q-1=3m^2-1$. Thus no bijection can preserve the $3$-parts of degrees and
values up to sign on $T$. The same argument, after complex conjugation, gives
the failure for $T^{-1}$.
\end{proof}

\subsection{Malle's refinement}

In private communication \cite{mal24}, Gunter Malle suggested that one might hope for a further
refinement of the picky and subnormalizer conjectures, namely that the $p$-parts of
the character values should also match under the expected correspondence. To state this, recall that if
$0\ne\alpha\in \overline{\mathbb C}$ is an algebraic integer, then one defines its
$p$-part by
$$
\alpha_p:=\bigl|N_{\mathbb Q(\alpha)/\mathbb Q}(\alpha)\bigr|_p^{1/[\mathbb Q(\alpha):\mathbb Q]}.
$$
Here $N_{\mathbb Q(\alpha)/\mathbb Q}$ denotes the field norm. When $\alpha\in \mathbb Z$, this is just the usual $p$-part of the integer $\alpha$.

Malle asked whether, in addition to conditions (1) and (2) in Conjectures~A
and~B, one can also require
\begin{enumerate}
\item[(3)] $\chi(x)_p=f(\chi)(x)_p$ for every $\chi$.
\end{enumerate}

I expect the bijections in Conjectures~A and~B to satisfy this additional
condition. Indeed, whenever the strong picky conjecture holds, condition~(3) is
automatic. A careful analysis of the tables in the preceding subsections shows that condition~(3) also holds for the bad classes in the groups above.

\section{$p$-sections and reduced $p$-sections}

We now consider stronger versions of the subnormalizer conjecture.

For a subgroup $A$ of $G$, let
$$
        S_G(A)=\{g\in G\mid A\trianglelefteq\trianglelefteq \langle A,g\rangle\}
$$
be the subnormalizer set of $A$ in the sense of Casolo, and put
$$
        \Sub_G(A)=\langle S_G(A)\rangle.
$$
If $A=\langle x\rangle$ is cyclic, we write $S_G(x)$ and $\Sub_G(x)$.
If $P\in\Syl_p(G)$ and $x$ is a $p$-element of $G$, let
$$
        \lambda_G(x)=|\{\,Q\in\Syl_p(G)\mid x\in Q\,\}|.
$$

\begin{lemma}\label{lem:subnormalizer-picky}
Let $G$ be a finite group, let $P\in \Syl_p(G)$, and let $x\in P$. Then
$\bN_G(P)\subseteq \Sub_G(x)$. Moreover, $\Sub_G(x)=\bN_G(P)$ if and only if
$x$ is picky.
\end{lemma}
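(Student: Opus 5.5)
The plan is to work directly from the definition of $S_G(x)$. The key tool is the standard fact that a subnormal $p$-subgroup of a finite group $K$ lies in $\bO_p(K)$. Conversely, any subgroup of the $p$-group $\bO_p(K)$ is subnormal in $\bO_p(K)$, and hence in $K$.

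\emph{Step 1: the inclusion.} I will first show that $\bN_G(R)\subseteq S_G(x)$ for every Sylow $p$-subgroup $R$ of $G$ containing $x$; taking $R=P$ gives $\bN_G(P)\subseteq\Sub_G(x)$. Let $g\in\bN_G(R)$ and put $K=\langle x,g\rangle\le\bN_G(R)$. Then $R\cap K$ is a normal $p$-subgroup of $K$ containing $x$. Since $\langle x\rangle$ is subnormal in the $p$-group $R\cap K$, and $R\cap K\trianglelefteq K$, we get $\langle x\rangle\trianglelefteq\trianglelefteq K$. Hence $g\in S_G(x)$.

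\emph{Step 2: picky implies equality.} Assume $x$ is picky and let $g\in S_G(x)$, with $K=\langle x,g\rangle$. By the fact recalled above, $x\in\bO_p(K)$, so $x$ lies in every Sylow $p$-subgroup of $K$. Fix $S\in\Syl_p(K)$. Then $x\in S$, and since $g\in K$ also $x\in S^{g}$. Choose $R\in\Syl_p(G)$ with $S\le R$. Then $x\in R$ and $x\in R^{g}$. Pickiness forces $R=P=R^{g}$, so $g\in\bN_G(P)$. Thus $S_G(x)\subseteq\bN_G(P)$, and since $\bN_G(P)$ is a subgroup, $\Sub_G(x)\subseteq\bN_G(P)$. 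Together with Step~1 this gives equality.

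\emph{Step 3: equality implies picky.} Suppose $x$ is not picky, and pick $Q\in\Syl_p(G)$ with $x\in Q$ and $Q\ne P$. By Step~1 applied to $Q$, we have $\bN_G(Q)\subseteq\Sub_G(x)$. If $\Sub_G(x)=\bN_G(P)$, then $Q\le\bN_G(Q)\le\bN_G(P)$. So $Q$ normalizes $P$, and $PQ$ is a $p$-subgroup containing the Sylow subgroups $P$ and $Q$. This forces $Q=P$, a contradiction.

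The only step that is not purely formal is Step~2, which uses the fact that subnormal $p$-subgroups lie in $\bO_p$. I expect it to be the main (mild) obstacle. I would justify it by induction on the length of a subnormal series: if $A\trianglelefteq B\trianglelefteq\trianglelefteq K$ with $A$ a $p$-group, then $A\le\bO_p(B)$. Since $\bO_p(B)$ is characteristic in $B$, it is subnormal in $K$, and induction then puts $\bO_p(B)$, and hence $A$, inside $\bO_p(K)$.
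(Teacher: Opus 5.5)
Your proof is correct. It differs from the paper's in how the equivalence is proved.

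Your Step~1 is essentially the paper's argument: you use the normal $p$-subgroup $R\cap K$ of $K$, where the paper uses the normal closure of $\langle x\rangle$ in $\langle x,g\rangle$.

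For the equivalence, the paper invokes Casolo's theorem $|S_G(x)|=\lambda_G(x)|\bN_G(P)|$. When $\lambda_G(x)=1$, this forces $S_G(x)=\bN_G(P)$ by counting. Conversely, $\Sub_G(x)=\bN_G(P)$ squeezes $S_G(x)=\bN_G(P)$, and hence $\lambda_G(x)=1$.

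You instead use only the elementary fact that subnormal $p$-subgroups lie in $\bO_p$. This gives, for each $g\in S_G(x)$, a Sylow subgroup $R$ with $x\in R\cap R^g$, which pickiness turns into $g\in\bN_G(P)$. For the converse, you apply Step~1 to a second Sylow subgroup $Q\ni x$ to get $\bN_G(Q)\le\Sub_G(x)$.

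\textbf{What each route buys.} The paper's route is shorter once Casolo's formula is available, and that formula gives $|S_G(x)|$ for every $p$-element $x$. However, it rests on a deep result proved with Quillen-complex techniques, and the original proof needed the repair recorded in the Appendix. Your route is self-contained. It also shows $S_G(x)=\bN_G(P)$ as sets in the picky case. Moreover, your inclusion $\bN_G(Q)\subseteq S_G(x)$ for every Sylow $Q$ containing $x$ is one half of Malle's description of $\Sub_G(x)$. This is in the spirit of the simpler proofs by Rizo and Malle mentioned after Lemma~\ref{lem:subnormalizer-fusion}.

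\textbf{A presentational remark on the $\bO_p$ fact.} Your induction is cleanest if you peel off the top term of the subnormal series. From $A\trianglelefteq\trianglelefteq B\trianglelefteq K$, induction gives $A\le\bO_p(B)$. Since $\bO_p(B)$ is characteristic in $B\trianglelefteq K$, it is a normal $p$-subgroup of $K$, so $\bO_p(B)\le\bO_p(K)$.
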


\begin{proof}
We first prove that $\bN_G(P)\subseteq S_G(x)$. Let $g\in \bN_G(P)$. The
subgroup generated by the conjugates of $\langle x\rangle$ under
$\langle x,g\rangle$ is a $p$-subgroup of $P$ and is normal in
$\langle x,g\rangle$. Since every subgroup of a finite $p$-group is
subnormal, it follows that $\langle x\rangle$ is subnormal in
$\langle x,g\rangle$. Hence $g\in S_G(x)$.

Casolo's theorem on subnormalizers \cite{cas} gives
$$
        |S_G(x)|=\lambda_G(x)|\bN_G(P)|.
$$
If $x$ is picky, then $\lambda_G(x)=1$. Since $\bN_G(P)\subseteq S_G(x)$, this
forces $S_G(x)=\bN_G(P)$, and therefore $\Sub_G(x)=\bN_G(P)$. Conversely, if
$\Sub_G(x)=\bN_G(P)$, then $S_G(x)=\bN_G(P)$, and Casolo's formula gives
$\lambda_G(x)=1$. Thus $x$ is picky.
\end{proof}

Put $N=\bN_G(P)$. The permutation character $(1_N)^G$ is the character of
the action of $G$ by conjugation on the set $\Syl_p(G)$. Hence, for $x\in P$,
the value $(1_N)^G(x)$ is the number of Sylow $p$-subgroups fixed by $x$, or
equivalently normalized by $x$. Since $x$ is a $p$-element, a Sylow
$p$-subgroup normalized by $x$ contains $x$. Therefore
$$
        \lambda_G(x)=(1_N)^G(x).
$$
This identity was also noticed in \cite{kn} and \cite{sam}. Consequently, for
$x\in P$,
$$
        x \text{ is picky}
        \quad\Longleftrightarrow\quad
        \lambda_G(x)=1
        \quad\Longleftrightarrow\quad
        (1_N)^G(x)=1.
$$

The following result generalizes Lemma~\ref{lem:picky-fusion}.

\begin{lemma}\label{lem:subnormalizer-fusion}
Let $G$ be a finite group, let $x$ be a $p$-element of $G$, and let
$y\in\Sub_G(x)$. If $x$ and $y$ are conjugate in $G$, then they are conjugate
in $\Sub_G(x)$.
\end{lemma}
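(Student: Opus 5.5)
The plan is to exploit the fact that $\Sub_G(x)$ contains every Sylow $p$-subgroup of $G$ that contains $x$, together with their normalizers. We then show that any $g\in G$ with $x^g=y\in\Sub_G(x)$ must itself lie in $\Sub_G(x)$, up to a Frattini-type argument.

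Write $S=\Sub_G(x)$ and fix $P\in\Syl_p(G)$ with $x\in P$. Lemma~\ref{lem:subnormalizer-picky} applies to \emph{any} Sylow $p$-subgroup containing $x$. So $\bN_G(T)\subseteq S$ for every $T\in\Syl_p(G)$ with $x\in T$, and in particular $T\le S$. Hence $P\le S$, so $P$ is a Sylow $p$-subgroup of $S$, and the Sylow $p$-subgroups of $S$ are exactly the Sylow $p$-subgroups of $G$ contained in $S$.

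Now let $y=x^g\in S$ with $g\in G$. Since $y$ is a $p$-element of $S$, choose $R\in\Syl_p(S)$ with $y\in R$; by the previous paragraph $R\in\Syl_p(G)$. Put $T=R^{g^{-1}}$. Then $x=y^{g^{-1}}\in T$ and $T\in\Syl_p(G)$, so the first step gives $T\le S$, and $T$ is a Sylow $p$-subgroup of $S$.

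By Sylow's theorem in $S$, there are $s,s'\in S$ with $T=P^{s}$ and $R=P^{s'}$. Since $R=T^g=P^{sg}$, we get $sg s'^{-1}\in\bN_G(P)\subseteq S$. Therefore $g\in s^{-1}Ss'=S$, so $x$ and $y$ are conjugate by an element of $S=\Sub_G(x)$, as required.

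The only delicate point is applying Lemma~\ref{lem:subnormalizer-picky} to the Sylow subgroup $T=R^{g^{-1}}$ rather than to the fixed $P$. This is legitimate because that lemma holds for an arbitrary Sylow $p$-subgroup containing $x$. No use of Casolo's counting formula is needed here, only the inclusion $\bN_G(T)\subseteq S_G(x)$, which was proved directly. When $x$ is picky we have $T=P$, and the argument reduces to Lemma~\ref{lem:picky-fusion}.
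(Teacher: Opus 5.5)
Your proof is correct, and it takes a genuinely different and much shorter route than the paper. The paper first conjugates $y$ into a fixed $P\in\Syl_p(G)$ inside $\Sub_G(x)$. It then invokes Alperin's Fusion Theorem to factor a conjugating element as $g_1\cdots g_nk$, where each $g_i$ is a $p$-element of $\bN_G(Q_i)$ for an extremal subgroup $Q_i\le P$ and $k\in\bN_G(P)$. An induction then shows that each partial product $w_i$ lies in $\Sub_G(x)$. The two ingredients of that induction are that $\langle c,g_i\rangle$ is a $p$-group, so $g_i\in S_G(c)$, and that $\Sub_G(c)=\Sub_G(x)$ once $c$ is an $\Sub_G(x)$-conjugate of $x$.

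You replace all of this with a Frattini-type argument. Its only input is that $\Sub_G(x)$ contains $\bN_G(T)$ for every Sylow $p$-subgroup $T$ containing $x$, which is the directly proved half of Lemma~\ref{lem:subnormalizer-picky}. The key move is applying it to $T=R^{g^{-1}}$, and this is legitimate because that lemma holds for any Sylow subgroup containing $x$.

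Your argument also proves more than the statement asks. It shows that every $g\in G$ with $x^g\in\Sub_G(x)$ already lies in $\Sub_G(x)$. In the language of the final section, $x$ is $\Sub_G(x)$-picky, and in particular $\Sub_G(x)$ is self-normalizing. The paper's proof offers in exchange an explicit factorization of the conjugating element through $p$-local subgroups, which the statement does not need. Your argument fits the paper's remark that simpler proofs avoiding Alperin's Fusion Theorem exist.
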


\begin{proof}
Put $R=\Sub_G(x)$, and choose $P\in\Syl_p(G)$ with $x\in P$. By
Lemma~\ref{lem:subnormalizer-picky}, we have $\bN_G(P)\leq R$. In particular
$P\leq R$, and since $P\in\Syl_p(G)$, it follows that $P\in\Syl_p(R)$.
Since $y\in R$ is a $p$-element, there exists $r\in R$ such that $y^r\in P$.
Thus it is enough to prove that, if $a,b\in P$ are conjugate in $G$, then they
are conjugate in $\Sub_G(a)$.

Let $a,b\in P$ and suppose that $a^g=b$ for some $g\in G$. We use Alperin's
Fusion Theorem in the form stated in \cite[Theorem~1.29]{cra}. Thus the pairs
$(Q,X)$, where $Q$ is an extremal subgroup of $P$ with respect to $G$ and
$X$ is the set of $p$-elements of $\bN_G(Q)$, form a conjugation family.
Applying this to the subsets $\{a\}$ and $\{b\}$ of $P$, there exist extremal
subgroups $Q_1,\ldots,Q_n$ of $P$, $p$-elements $g_i\in\bN_G(Q_i)$, and an
element $k\in\bN_G(P)$ such that $g_1\cdots g_n k$ conjugates $a$ to $b$ and,
putting $w_i=g_1\cdots g_i$ and $w_0=1$, one has
$$
        a^{w_{i-1}}\in Q_i
        \qquad (i=1,\ldots,n).
$$

We prove by induction on $i$ that $w_i\in\Sub_G(a)$. This is clear for
$i=0$. Suppose that $w_{i-1}\in\Sub_G(a)$, and put $c=a^{w_{i-1}}$. Then
$c\in Q_i$, and $g_i\in\bN_G(Q_i)$ is a $p$-element. Hence
$Q_i\langle g_i\rangle$ is a $p$-group, and
$$
        \langle c,g_i\rangle\leq Q_i\langle g_i\rangle.
$$
Thus $\langle c,g_i\rangle$ is a $p$-group. Therefore $\langle c\rangle$ is
subnormal in $\langle c,g_i\rangle$, so $g_i\in S_G(c)\subseteq \Sub_G(c)$.
Moreover,
$$
        \Sub_G(c)=\Sub_G(a)^{w_{i-1}}=\Sub_G(a),
$$
because $w_{i-1}\in\Sub_G(a)$. Hence $g_i\in\Sub_G(a)$, and so
$w_i=w_{i-1}g_i\in\Sub_G(a)$.

After the last step, $a^{w_n}\in P$. Since $k\in\bN_G(P)$,
Lemma~\ref{lem:subnormalizer-picky}, applied to the element $a^{w_n}\in P$,
gives
$$
        k\in\Sub_G(a^{w_n}).
$$
As $w_n\in\Sub_G(a)$, we have
$$
        \Sub_G(a^{w_n})=\Sub_G(a)^{w_n}=\Sub_G(a),
$$
and hence $k\in\Sub_G(a)$. Therefore $w_nk\in\Sub_G(a)$ conjugates $a$ to
$b$.

Applying this with $a=x$ and $b=y^r$, we see that $x$ and $y^r$ are conjugate
in $R$. Since $r\in R$, it follows that $x$ and $y$ are conjugate in $R$.
\end{proof}

These two results led me to believe that the right subgroup in these problems
is not, in general, the Sylow normalizer, but the subnormalizer of $x$. This
was the genesis of the subnormalizer conjecture. Computations in GAP \cite{gap} provided strong evidence for it. Later, N. Rizo found simpler proofs of
these results, avoiding both Casolo's main theorem in \cite{cas} and Alperin's
Fusion Theorem.

Recall that the $p$-section of $x$ is the union of the conjugacy classes of the
elements $xy$, where $y$ runs over the $p$-regular elements of $\bC_G(x)$.
Choose representatives $y_1,\ldots,y_t$ for the $p$-regular classes of
$\bC_G(x)$, and write
$$
        S_p(x)=\{xy_1,\ldots,xy_t\}.
$$
Thus $S_p(x)$ is a set of representatives for the conjugacy classes in the
$p$-section of $x$.

Similarly, the reduced $p$-section of $x$ is the union of the conjugacy classes
of the elements $xz$, where $z$ runs over the $p$-regular elements of
$\bC_G(P)$. Choose representatives $z_1,\ldots,z_s$ so that
$xz_1,\ldots,xz_s$ represent the conjugacy classes in the reduced $p$-section
of $x$, and write
$$
        R_p(x)=\{xz_1,\ldots,xz_s\}.
$$

\begin{lemma}\label{lem:psection-representatives}
Let $x$ be a $p$-element of $G$, and let $y,z\in\bC_G(x)$ be $p$-regular
elements. If $xy$ and $xz$ are conjugate in $G$, then they are conjugate in
$\bC_G(x)$, and hence in $\Sub_G(x)$.
\end{lemma}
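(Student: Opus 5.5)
The plan is to use the uniqueness of the $p$-decomposition of an element. Since $y$ is $p$-regular and commutes with the $p$-element $x$, the element $xy$ has $p$-part $(xy)_p=x$ and $p'$-part $(xy)_{p'}=y$. These are the unique commuting $p$- and $p'$-elements whose product is $xy$, and each is a power of $xy$. The same holds for $xz$, whose $p$-part is $x$ and whose $p'$-part is $z$.

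Now suppose $(xy)^g=xz$ for some $g\in G$. Conjugation is an automorphism, so it preserves orders and powers, and therefore it carries the $p$-part and the $p'$-part of $xy$ to the corresponding parts of $xz$. Concretely, write $x=(xy)^{k}$ for a suitable integer $k$ (chosen via the Chinese remainder theorem, with $k\equiv 1$ modulo $o(x)$ and $k\equiv 0$ modulo $o(y)$). Then
$$
x^g=\bigl((xy)^g\bigr)^{k}=(xz)^{k}=x,
$$
because the same $k$ extracts the $p$-part of $xz$: the order of $xz$ is $o(x)\,o(z)$, and the $p$-part of that order equals $o(x)$. Hence $g\in\bC_G(x)$. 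This shows that $xy$ and $xz$ are conjugate in $\bC_G(x)$. The only point needing care is to choose $k$ so that it works for both elements at once. Alternatively, one can simply invoke the uniqueness of the decomposition: the element $x^g$ is a $p$-element, $y^g$ is a $p'$-element, they commute, and their product is $xz$, so $x^g=x$ and $y^g=z$.

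For the last assertion it remains to check that $\bC_G(x)\le\Sub_G(x)$. Take $c\in\bC_G(x)$. Then $\langle x\rangle$ is central, and in particular normal, in $\langle x,c\rangle$. So $\langle x\rangle$ is subnormal in $\langle x,c\rangle$, which gives $c\in S_G(x)\subseteq\Sub_G(x)$. No step is a genuine obstacle. The only mild subtlety is the bookkeeping with the $p$-part, which is handled by the uniqueness of the $p$-decomposition.
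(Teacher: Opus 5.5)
Your proof is correct and is essentially the paper's argument: since $(xy)^g=x^gy^g$, the $p$-part of $(xy)^g$ is $x^g$, which must equal the $p$-part $x$ of $xz$, so $g\in\bC_G(x)$; and $\bC_G(x)\le\Sub_G(x)$ because $\langle x\rangle\trianglelefteq\langle x,c\rangle$ for every $c\in\bC_G(x)$. In your explicit power-map version, $(xz)^k=x$ requires $z^k=1$, which holds because $o(z)=o(y)$ (conjugacy gives $o(xz)=o(xy)$); your alternative argument via uniqueness of the $p$-decomposition does not need this observation.
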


\begin{proof}
Suppose that $(xy)^g=xz$ for some $g\in G$. Since $x$ commutes with $y$ and
$z$, and since $y$ and $z$ are $p$-regular, the $p$-parts of $xy$ and $xz$ are
$x$ and $x$, respectively. Thus the $p$-part of $(xy)^g$ is $x^g$, and hence
$x^g=x$. Therefore $g\in\bC_G(x)$, and $xy$ and $xz$ are conjugate in
$\bC_G(x)$. Finally, $\bC_G(x)\leq \Sub_G(x)$, because
$\langle x\rangle\unlhd \langle x,c\rangle$ for every $c\in\bC_G(x)$. Hence
$xy$ and $xz$ are conjugate in $\Sub_G(x)$.
\end{proof}

We now formulate the following strengthening of the subnormalizer conjecture.

\begin{conjecture}\label{conj:reduced-p-section}
Let $G$ be a finite group, let $p$ be a prime, let $P\in \Syl_p(G)$, and let
$x\in P$. Then there exists a bijection
$$
f:\Irr^{R_p(x)}(G)\longrightarrow \Irr^{R_p(x)}(\Sub_G(x))
$$
such that
\begin{enumerate}
\item $\chi(1)_p=f(\chi)(1)_p$ for every $\chi\in \Irr^{R_p(x)}(G)$;
\item $\chi(g)=0$ if and only if $f(\chi)(g)=0$ for every $g\in R_p(x)$ and every
      $\chi\in \Irr^{R_p(x)}(G)$;
\item $\mathbb Q(\chi(g))=\mathbb Q(f(\chi)(g))$ for every $g\in R_p(x)$ and every
      $\chi\in \Irr^{R_p(x)}(G)$;
\item for every $g\in R_p(x)$, the restriction of $f$ to $\Irr^g(G)$ is a bijection
      onto $\Irr^g(\Sub_G(x))$.
\end{enumerate}
\end{conjecture}

Although the full $p$-section is too much to expect in general, stronger
properties should hold when the Sylow $p$-subgroups are abelian. For example,
in this case I expect the strong picky and subnormalizer conjectures to be
true. The next result explains why this case is closely related to McKay-type
statements. Its proof is short, but uses deep results.

\begin{proposition}\label{prop:abelian-sylow-picky-values}
Let $G$ be a finite group, let $P\in\Syl_p(G)$ be abelian, let $1\ne x\in P$ be
picky, and let $y\in\bC_G(x)$ be $p$-regular. Then
$$
        \Irr^{xy}(G)\subseteq \Irr_{p'}(G).
$$
In particular,
$$
        \Irr^x(G)=\Irr_{p'}(G).
$$
\end{proposition}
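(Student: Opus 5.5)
The plan is to combine Brauer's Second Main Theorem with the height zero conjecture for blocks with abelian defect groups, which has been proved by Kessar and Malle. This is the ``deep result'' behind the short proof. The skeleton is: a nonzero value at $xy$ forces $\chi$ into a block of maximal defect; abelian Sylow subgroups then force $\chi$ to have height zero, that is, $p'$-degree.

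First I will show that $P$ is the unique Sylow $p$-subgroup of $C=\bC_G(x)$. Since $P$ is abelian and $x\in P$, we have $P\le C$, so $P\in\Syl_p(C)$. Any Sylow $p$-subgroup $Q$ of $C$ contains $x$, because $x$ is a central $p$-element of $C$ and so lies in every Sylow $p$-subgroup of $C$. Moreover $Q$ is also Sylow in $G$, since it is conjugate in $C$ to $P$. By pickiness, $Q=P$. Hence $P\trianglelefteq C$. Consequently every $p$-block $b$ of $C$ has defect group containing $\bO_p(C)=P$, and so every block of $C$ has defect group exactly $P$.

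Next, let $\chi\in\Irr(G)$ lie in the block $B$ and assume $\chi(xy)\ne 0$ with $y\in C$ $p$-regular. Brauer's Second Main Theorem gives
$$\chi(xy)=\sum_{b}\sum_{\varphi\in\mathrm{IBr}(b)} d^{x}_{\chi\varphi}\,\varphi(y),$$
where $b$ runs over the blocks of $C$ with $b^G=B$. This requires that the blocks $b$ of $C$ induce to $G$, which is standard since $C=\bC_G(x)$ with $x$ a $p$-element. So there is some block $b$ of $C$ with $b^G=B$. By the previous step $b$ has defect group $P$. A defect group of $b^G$ contains a defect group of $b$, up to conjugation. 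Hence $B$ has defect group $P$, which is an abelian Sylow $p$-subgroup. This recovers the content of Proposition~\ref{prop:mmm-characters}, now at $xy$ rather than only at $x$.

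Finally, by the height zero theorem for blocks with abelian defect groups, every irreducible character of $B$ has height zero. Since the defect group is Sylow, height zero means $p\nmid\chi(1)$. Hence $\Irr^{xy}(G)\subseteq\Irr_{p'}(G)$. Taking $y=1$ gives $\Irr^x(G)\subseteq\Irr_{p'}(G)$. The reverse inclusion is \cite[Corollary~4.20]{navmc}, so $\Irr^x(G)=\Irr_{p'}(G)$.

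The main obstacle is this last step. It rests entirely on the Kessar--Malle theorem, which depends on the classification; the rest is routine bookkeeping of the Second Main Theorem together with the normality of $P$ in $\bC_G(x)$.
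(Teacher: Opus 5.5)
Your proof is correct. Its outer skeleton matches the paper's: nonvanishing at $xy$ forces $B$ to have full defect, Kessar--Malle then gives height zero, and \cite[Corollary~4.20]{navmc} gives the reverse inclusion. The difference lies in how you get full defect.

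The paper quotes \cite[Corollary~5.9]{nav98b} to place $x$ in some defect group $D$ of $B$, and uses pickiness to get $D\le P$. It then invokes Green's theorem, $D=P\cap P^u$ for some $u\in G$, and uses pickiness once more to force $D=P$. You instead use the abelianness of $P$ to get $P\le\bC_G(x)$, and pickiness to get $P\trianglelefteq\bC_G(x)$. Hence every block of $\bC_G(x)$ has defect group $P$, and the Second Main Theorem, together with the fact that block induction enlarges defect groups, finishes the step.

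Your route avoids Green's theorem and yields the byproduct $\bC_G(x)\le\bN_G(P)$. However, it spends the abelian hypothesis already at this step. The paper's argument proves full defect for every picky $x$ with no hypothesis on $P$, extending Proposition~\ref{prop:mmm-characters} to the whole $p$-section of $x$; abelianness enters only through Kessar--Malle.

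In particular, your remark that you recover Proposition~\ref{prop:mmm-characters} at $xy$ is justified only in the abelian case. Without that hypothesis, your method shows that $\bC_P(x)$ is the normal Sylow $p$-subgroup of $\bC_G(x)$. That gives a defect group $D$ with $\bC_P(x)\le D\le P$, and an extra ingredient such as Green's theorem is needed to reach $D=P$.
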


\begin{proof}
Let $\chi\in\Irr(G)$ with $\chi(xy)\ne0$, and let $B$ be the block of $G$
containing $\chi$. By \cite[Corollary~5.9]{nav98b}, the $p$-part $x$ of
$xy$ is contained in some defect group $D$ of $B$. Since $x$ is picky, the
Sylow $p$-subgroup containing $D$ is $P$, and so $D\leq P$.

By Green's theorem \cite[Corollary~4.21]{nav98b}, there exists $u\in G$ such that
$$
        P\cap P^u=D.
$$
Since $x\in D$, we have $x\in P\cap P^u$. As $x$ is picky, it follows that
$P=P^u$, and therefore $D=P$. Thus $B$ has full defect.

Since $B$ has abelian defect groups, Kessar and Malle's theorem \cite{km}
implies that every irreducible character in $B$ has height zero. Since $B$ has
full defect, this gives $\chi(1)_p=1$. Thus $\chi\in\Irr_{p'}(G)$.

This proves $\Irr^{xy}(G)\subseteq\Irr_{p'}(G)$. Conversely, for $y=1$,
since $P$ is abelian and $x$ is picky, the index $|G:\bC_G(x)|$ is prime to $p$.
Hence \cite[Corollary~4.20]{navmc} gives $\Irr_{p'}(G)\subseteq\Irr^x(G)$,
and the equality $\Irr^x(G)=\Irr_{p'}(G)$ follows.
\end{proof}

\begin{conjecture}\label{conj:abelian-sylow-p-section}
Let $G$ be a finite group with abelian Sylow $p$-subgroups. Let $P\in \Syl_p(G)$
and let $x\in P$ be picky. Then there exists a bijection
$$
f:\Irr^{S_p(x)}(G)\longrightarrow \Irr^{S_p(x)}(\Sub_G(x))
$$
such that
\begin{enumerate}
\item $\chi(1)_p=f(\chi)(1)_p$ for every $\chi\in \Irr^{S_p(x)}(G)$;
\item $\chi(g)=\pm f(\chi)(g)$ for every $g\in S_p(x)$ and every
      $\chi\in \Irr^{S_p(x)}(G)$;
\item for every $g\in S_p(x)$, the restriction of $f$ to $\Irr^g(G)$ is a bijection
      onto $\Irr^g(\Sub_G(x))$.
\end{enumerate}
\end{conjecture}

When the Sylow $p$-subgroups have order $p$, one can prove this stronger
$p$-section version completely.

\begin{theorem}\label{thm:sylow-order-p}
Let $G$ be a finite group, let $p$ be an odd prime, let $P$ be a Sylow
$p$-subgroup of $G$, and assume that $|P|=p$. Let $1\ne x\in P$. Then there
exists a bijection
$$
f:\Irr^{S_p(x)}(G)\longrightarrow \Irr^{S_p(x)}(\bN_G(P))
$$
such that
\begin{enumerate}
\item $\chi(1)_p=f(\chi)(1)_p$ for every $\chi\in \Irr^{S_p(x)}(G)$;
\item $\chi(g)=\pm f(\chi)(g)$ for every $g\in S_p(x)$ and every
      $\chi\in \Irr^{S_p(x)}(G)$;
\item $\chi(1)\equiv \pm f(\chi)(1)\pmod p$ for every
      $\chi\in \Irr^{S_p(x)}(G)$.
\end{enumerate}
\end{theorem}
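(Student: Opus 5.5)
Since $|P|=p$, every $1\neq x\in P$ is picky, so $\Sub_G(x)=\bN_G(P)=:N$ by Lemma~\ref{lem:subnormalizer-picky}. Moreover $\bC_G(x)=\bC_G(P)\leq N$, so $S_p(x)$ may be taken in $N$. The elements of $S_p(x)$ represent distinct classes of $N$ as well as of $G$, by Lemma~\ref{lem:psection-representatives}. The plan is to build $f$ block by block, using Brauer's theory of blocks with cyclic defect group of order $p$.

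\textbf{Step 1: reduce to blocks of full defect.} Let $g=xy\in S_p(x)$ and $\chi\in\Irr(G)$ with $\chi(g)\ne0$. Arguing exactly as in the proof of Proposition~\ref{prop:abelian-sylow-picky-values}, $x$ lies in a defect group of the block $B$ of $\chi$. As $|P|=p$, $B$ has defect group $P$. The same argument in $N$ shows that only blocks of $N$ with defect group $P$ contribute. By Brauer's First Main Theorem, the blocks of $G$ and of $N$ with defect group $P$ are in bijection $B\leftrightarrow b$. It therefore suffices to construct, for each such pair, a bijection between $\Irr(B)$ and $\Irr(b)$ satisfying (1)--(3). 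We then restrict it to the characters that are nonzero somewhere on $S_p(x)$; condition (2) guarantees that such characters are sent to such characters. Condition (1) is automatic, because in a block with defect group of order $p$ every character has height zero, so all degrees involved are prime to $p$.

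\textbf{Step 2: compare values via Brauer's theory.} By Brauer's theory (equivalently, Dade's for cyclic defect), $B$ and $b$ have the same inertial index $e$ and the same number of characters: $e$ non-exceptional ones and $(p-1)/e$ exceptional ones. The exceptional characters are labelled by the $E$-orbits on $\Irr(P)^\#$, where $E=N/\bC_G(P)$. For $g=xy$ we use the second main theorem:
$$\chi(xy)=\sum_{\varphi}d^{x}_{\chi\varphi}\,\varphi(y),$$
where $\varphi$ runs over the Brauer characters of the blocks of $\bC_G(x)=\bC_G(P)$ covered by $B$ (equivalently by $b$, since $\bC_N(x)=\bC_G(x)$). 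The analogous formula holds in $N$ with the same $\varphi$'s.

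Brauer's explicit description of the generalized decomposition numbers then does the work. A non-exceptional $\chi$ has $d^x_{\chi\varphi}=\varepsilon_\chi\,c_{\chi\varphi}$ for a sign $\varepsilon_\chi$ and a $0/1$ pattern governed by the Brauer tree. An exceptional character labelled by an orbit $\Lambda$ has $d^x_{\chi\varphi}=\varepsilon\sum_{\lambda\in\Lambda}\lambda(x)$ times a fixed pattern. The same shape holds in $b$, whose Brauer tree is a star.

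We therefore match exceptional characters to exceptional characters with the same label $\Lambda$. For non-exceptional characters, we use the bijection given by the perfect isometry (Rouquier/Broué) between $B$ and $b$. This isometry sends each non-exceptional character of $b$ to $\pm$ a non-exceptional character of $B$, with the same $d^x$-pattern up to that single sign. This gives $\chi(g)=\pm f(\chi)(g)$ for all $g\in S_p(x)$, with one sign per character.

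\textbf{Step 3: degrees mod $p$.} For condition (3) we use that a perfect isometry $I$ satisfies $I(\psi)(1)/\psi(1)\equiv$ a constant unit mod $p$ on each block. For defect $p$ this constant can be computed explicitly and shown to be $\pm1$; the oddness of $p$ keeps the signs well defined. Alternatively, one compares Brauer's congruence $\chi(1)\equiv\pm\,\chi(xy)$-type relations on both sides.

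The main obstacle is Step 2: making sure that a single sign $\varepsilon_\chi$, independent of $y$, works simultaneously on the whole $p$-section, and that the non-exceptional characters of $B$ can be matched to those of the star of $b$ preserving the $d^x$-patterns. I would first handle this via Rouquier's perfect isometry for cyclic defect, which supplies exactly such a sign. If that fails, I would fall back on a direct comparison of Brauer trees: each edge of the tree of $B$ corresponds to a $\varphi$, and adjacent vertices carry opposite signs.
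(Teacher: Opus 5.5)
Your framework is the paper's: pass to blocks with defect group $P$, pair $B$ with its Brauer correspondent, and compare values on $xy$ through Brauer's theory of blocks with defect group of order $p$. But Step~2, which carries the whole content of the theorem, is not established, and neither of the two tools you propose delivers it.

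First, your description of the generalized decomposition numbers is wrong. In $\chi(xy)=\sum_\varphi d^x_{\chi\varphi}\varphi(y)$, the $\varphi$ are the Brauer characters of the blocks of $\bC_G(x)=C=P\times K$ that induce $B$. These are the $N$-conjugates $\xi^n$ of the $p'$-part $\xi\in\Irr(K)$ of the canonical character of a root of $B$, so the Brauer tree of $B$ plays no role. What Brauer's theory actually gives, and what the paper uses (\cite[Theorem~11.13(d),(e)]{nav98b}), is $d^x_{\chi_i,\xi^n}=\varepsilon_i$ for every $n$. That is,
$$
\chi_i(xy)=\varepsilon_i\,\frac1e\,(1_P\times\xi)^N(y)
$$
for each non-exceptional $\chi_i$, and $\theta_j(xy)=\pm(\lambda_j\times\xi)^N(xy)$ for the exceptional ones. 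Here the $\lambda_j$ represent the orbits of $T=N_\xi$ on $\Irr(P)\setminus\{1_P\}$, not the orbits of $N/C$; there are $(p-1)/e$ of them, with $e=|T:C|$.

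Second, a perfect isometry gives no control of values at $p$-singular elements. For $G=N=P$, multiplication by a nontrivial $\lambda\in\Irr(P)$ is a perfect isometry that sends $1_P$ to an exceptional character, and $\lambda(x)\neq\pm1$. You would need an isotypy whose local isometry on the blocks of $\bC_G(x)=\bC_N(x)$ is $\pm$ the identity, and that is extra input you have not justified. It is also unnecessary: by the display, all non-exceptional characters of $B$ take the same value at $xy$ up to sign, so any bijection works on them.

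On the $N$ side you still need the Clifford computation that the paper carries out. Let $\tilde\mu\in\Irr(T)$ extend $\mu=1_P\times\xi$. The $e$ characters $(\tilde\mu\lambda)^N$, $\lambda\in\Irr(T/C)$, have $P$ in their kernel and satisfy $\mu^N(y)=e\,(\tilde\mu\lambda)^N(y)$ for $y\in K$. So their value at $xy$ is exactly $\frac1e\mu^N(y)$. The remaining characters of $\Irr(N\mid\xi)$ are the $(\lambda_j\times\xi)^N$. With these formulas the matching is immediate.

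Your Step~3 is likewise only asserted: the claim that the perfect-isometry constant is $\pm1$ is not shown. It can be dropped entirely. Since $x\in S_p(x)$ and $\psi(1)\equiv\psi(x)$ modulo any prime ideal over $p$ for every character $\psi$, condition~(2) at $g=x$ already yields $\chi(1)\equiv\pm f(\chi)(1)\pmod p$. The paper instead quotes the congruences in \cite[Theorem~11.13(a)]{nav98b}.
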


\begin{proof}
Set $N=\bN_G(P)$ and $C=\bC_G(P)$. Since $|P|=p$, every element of
$P^\#$ is picky. Hence, by Lemma~\ref{lem:subnormalizer-picky},
$\Sub_G(x)=N$ for every $x\in P^\#$. Also $C=\bC_G(x)$, and the
$p$-regular elements of $C$ lie in $K=\mathbf O_{p'}(N)$, where $C=P\times K$.
Thus the representatives in $S_p(x)$ may be chosen in the form $xy$, with
$y\in K$.

The group $N$ has a normal abelian Sylow $p$-subgroup, so every irreducible
character of $N$ has $p'$-degree. Moreover, by \cite[Corollary~4.20]{navmc},
these characters do not vanish on elements of $P^\#$. Thus
$\Irr^{S_p(x)}(N)=\Irr(N)$. On the other hand, if $\chi\in\Irr(G)$ belongs to a
block of defect zero, then $\chi$ vanishes on all $p$-singular elements. Hence
the characters in $\Irr^{S_p(x)}(G)$ are contained in the blocks of $G$ with
defect group $P$.

Let $B$ be a block of $G$ with defect group $P$.  Let $b$ be a root of $B$ in $C$, and let
$1_P\times\xi\in\Irr(b)$, with $\xi\in\Irr(K)$, be the canonical character of
$B$. By the extended first main theorem, $b^G=B$ and $b^N$ is Brauer's
correspondent of $B$ in $N$; see \cite[Chapter~4]{nav98b}. Since $C\lhd N$, the
block $b^N$ is the unique block of $N$ covering $b$, by
\cite[Corollary~9.21]{nav98b}, and its irreducible characters are the characters
in $\Irr(N\mid\xi)$.

Let $T=N_\xi$, let $e=|T:C|$, and let $t=(p-1)/e$. First consider the characters
of $N$ lying over $\mu=1_P\times\xi$. Since $T/C$ is cyclic, $\mu$ extends to a
character $\widetilde\mu\in\Irr(T)$, and Gallagher's theorem and Clifford theory
give
$$
\Irr(N\mid\mu)=\{(\widetilde\mu\lambda)^N\mid \lambda\in\Irr(T/C)\}.
$$
There are $e$ such characters. By \cite[Theorem~11.13]{nav98b}, the block $B$
has $e$ nonexceptional characters, say $\chi_1,\ldots,\chi_e$. We match these
characters with the characters $(\widetilde\mu\lambda)^N$. Condition (1) is
clear, since all these characters have $p'$-degree.

Let $y\in K$. By \cite[Theorem~11.13(d)]{nav98b},
$$
\chi_i(xy)=\varepsilon_i\frac1e(1_P\times\xi)^N(y)
$$
for some sign $\varepsilon_i$. Also
$$
(1_P\times\xi)^N(y)=\sum_{\lambda\in\Irr(T/C)}(\widetilde\mu\lambda)^N(y).
$$
For fixed $y\in K\leq C$, the value $(\widetilde\mu\lambda)^N(y)$ is independent
of $\lambda$, because the characters of $T/C$ are trivial on conjugates of $y$.
Thus $(1_P\times\xi)^N(y)=e(\widetilde\mu\lambda)^N(y)$. Since $P$ is contained
in the kernel of $(\widetilde\mu\lambda)^N$, we get
$$
\chi_i(xy)=\pm(\widetilde\mu\lambda)^N(xy).
$$
This proves condition (2) for the nonexceptional characters. Finally, the
congruence in \cite[Theorem~11.13(a)]{nav98b} gives condition (3) for the
nonexceptional characters.

Now let $\lambda_1,\ldots,\lambda_t$ be representatives of the action of $T$ on
$\Irr(P)\setminus\{1_P\}$. Clifford theory gives that the characters in
$\Irr(N\mid\xi)$ not lying over $1_P\times\xi$ are
$$
(\lambda_j\times\xi)^N,\qquad j=1,\ldots,t.
$$
The block $B$ has $t$ exceptional characters, say $\theta_1,\ldots,\theta_t$, and
we match $\theta_j$ with $(\lambda_j\times\xi)^N$. Again condition (1) is clear.
For condition (2), \cite[Theorem~11.13(e)]{nav98b} gives
$$
\theta_j(xy)=\varepsilon(\lambda_j\times\xi)^N(xy)
$$
for some sign $\varepsilon$.

Finally, condition (3) for the exceptional characters follows from the last
part of \cite[Theorem~11.13(a)]{nav98b}.

Taking the union over all blocks of defect group $P$, we obtain the required
bijection between $\Irr^{S_p(x)}(G)$ and $\Irr^{S_p(x)}(N)$.
\end{proof}

The theorem is stated for odd primes because the proof uses
\cite[Theorem~11.13]{nav98b}. If $p=2$ and $|P|=2$, then $P$ is cyclic. By
Cayley's corollary to Frobenius' normal $p$-complement theorem, in the form
\cite[Corollary~1.14]{cra}, $G$ has a normal $2$-complement. This case was
considered in \cite{mr}.

On the other hand, N. Rizo considered the extension of Theorem~\ref{thm:sylow-order-p} to arbitrary
cyclic Sylow subgroups, using Dade's theory of blocks with cyclic defect group. This is used in Theorem~\ref{thm:breuer-sporadic}.

The essential counterexamples to the strong picky conjecture that we are aware
of are the simple groups with nonabelian TI Sylow $p$-subgroups and the sporadic
counterexamples found by Breuer. The strong subnormalizer conjecture fails more
often. For instance, Malle found the following example. Let $G=G_2(4)$ and
$p=2$. There is a class of elements $x$ of order $4$ such that the subnormalizer
set is already a subgroup,
$S_G(x)=\Sub_G(x)$, and $|\Sub_G(x)|=5|\bN_G(P)|$, where
$P\in\Syl_2(G)$. This class is rational, and there is a bijection satisfying
the non-strong subnormalizer conditions, but the values do not agree up to sign
on the characters of odd degree.

The evidence so far suggests that the reduced $p$-section is the right object
in general. In all the examples considered so far, the set
$\Irr^{R_p(x)}(G)$ appears to coincide with $\Irr^x(G)$.

Finally, one may also hope for a global version of the subnormalizer conjecture.
Given $x\in P$, define
$$
Q=\{\,g\in P\mid \Sub_G(g)=\Sub_G(x)\,\}.
$$
Then one may ask whether there exists a bijection
$$
f:\Irr^Q(G)\longrightarrow \Irr^Q(\Sub_G(x))
$$
satisfying the usual conditions: preservation of the $p$-parts of degrees,
preservation of fields of values, and compatibility with the subsets $\Irr^g(G)$
for all $g\in Q$.

Although it will not be used in this paper, the following description of the subnormalizer subgroup of a $p$-element, due to Malle, is very convenient.

\begin{proposition}[Malle]
Let $G$ be a finite group, let $p$ be a prime, and let $x\in G$ be a $p$-element. Then $\Sub_G(x)$ is the subgroup generated by the normalizers of the Sylow $p$-subgroups of $G$ that contain $x$.
\end{proposition}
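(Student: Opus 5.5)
We want to prove that $\Sub_G(x)=\langle \bN_G(Q)\mid Q\in\Syl_p(G),\ x\in Q\rangle$. Write $M$ for the right-hand side and prove the two inclusions separately.

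The inclusion $M\le\Sub_G(x)$ is immediate from Lemma~\ref{lem:subnormalizer-picky}. For each Sylow $p$-subgroup $Q$ containing $x$, that lemma (applied with $Q$ in place of $P$) gives $\bN_G(Q)\subseteq S_G(x)\subseteq\Sub_G(x)$. Since $\Sub_G(x)$ is a subgroup, it contains the subgroup generated by all of these normalizers.

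For the reverse inclusion it is enough to show $S_G(x)\subseteq M$, because $\Sub_G(x)=\langle S_G(x)\rangle$. So let $g\in S_G(x)$ and put $L=\langle x,g\rangle$; then $\langle x\rangle$ is subnormal in $L$.
\begin{enumerate}
\item Choose a subnormal series $\langle x\rangle=L_0\trianglelefteq L_1\trianglelefteq\cdots\trianglelefteq L_k=L$. A subnormal $p$-subgroup lies in $\bO_p$ of each group in the chain, by downward induction. Hence $x\in\bO_p(L)$, and in particular $X:=\langle x^L\rangle$ is a $p$-subgroup normal in $L$.
\item Let $R\in\Syl_p(L)$. Then $X\le R$. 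Extend $R$ to some $Q_0\in\Syl_p(G)$, so $x\in Q_0$.
\item Write $g=g_pg_{p'}$ with commuting $p$-part and $p'$-part, both in $L$. The element $g_p$ normalizes $X$, so $X\langle g_p\rangle$ is a $p$-group and lies in some Sylow $p$-subgroup $Q_1$ of $G$ containing $x$. Inside $\bN_G(X)$, whose Sylow $p$-subgroups all contain $X\ni x$, the element $g_p$ lies in some $p$-subgroup $S\ge X$ with $S\le Q_1$.
\item It then remains to place $g_p$ and $g_{p'}$ in $M$.
\end{enumerate}

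A cleaner route for the last step is to work entirely inside $\bN_G(X)$. Since $g\in L\le\bN_G(X)$, it suffices to prove that $\bN_G(X)\le M$ whenever $X$ is a $p$-subgroup containing $x$. Let $T\in\Syl_p(\bN_G(X))$; then $X\le T$. By the Frattini argument applied to the normal subgroup $\bO_p(\bN_G(X))$, or simply by Sylow's theorem, one has $\bN_G(X)=\langle T^{h}\mid h\rangle\cdot\bN_{\bN_G(X)}(T)$. Each $T^h$ contains $X^h=X\ni x$, so each $T^h$ lies in a Sylow $p$-subgroup $Q$ of $G$ containing $x$. Hence $T^h\le Q\le\bN_G(Q)\le M$. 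Thus the subgroup generated by the Sylow $p$-subgroups of $\bN_G(X)$, which is normal, lies in $M$, and it remains to handle $\bN_{\bN_G(X)}(T)$.

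Here I would argue by induction on the index $|G:T|$. If $T\in\Syl_p(G)$, then $\bN_G(T)\le M$ by definition. Otherwise $T<\bN_{Q}(T)$ for any Sylow $Q\ge T$, and $\bN_{\bN_G(X)}(T)\le\bN_G(T)$, where $T$ is a larger $p$-group containing $x$. Applying the same argument to $\bN_G(T)$ in place of $\bN_G(X)$, its Sylow subgroups strictly contain $T$, so the induction on the order of the $p$-subgroup (decreasing index) terminates at a Sylow normalizer.

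The main obstacle is this descent step. One must check that replacing $X$ by $T$ really increases the $p$-subgroup, which holds because normalizers grow in $p$-groups, and that the Frattini-type factorization is stated correctly. The facts needed are that $\bN_G(X)$ is generated by its $p$-subgroups together with $\bN_{\bN_G(X)}(T)$, and that $\bN_{\bN_G(X)}(T)\le\bN_G(T)$. Induction on $|G:T|_p$ then gives $\bN_G(X)\le M$ for every $p$-subgroup $X\ni x$, and therefore $S_G(x)\subseteq M$.
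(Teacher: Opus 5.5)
Your proof is correct, and it does more than the paper does at this point. The paper gives no argument of its own: it simply cites \cite[Proposition~2.6]{mal}, and adds only the remark that Casolo described $S_G(x)$ as the union of the normalizers of the radical $p$-subgroups containing $x$.

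You instead prove the statement from scratch in two elementary steps. First, $g\in S_G(x)$ forces $x\in\bO_p(\langle x,g\rangle)$, so $g$ normalizes the $p$-subgroup $X=\langle x^{\langle x,g\rangle}\rangle$, which contains $x$. This is the easy half of Casolo's description, without the radical refinement. Second, you show $\bN_G(X)\le M$ for every $p$-subgroup $X$ containing $x$. This uses the factorization $\bN_G(X)=\bO^{p'}(\bN_G(X))\,\bN_{\bN_G(X)}(T)$ with $T\in\Syl_p(\bN_G(X))$, together with downward induction on $|X|$. The key point is that $|T|\ge|\bN_Q(X)|>|X|$ whenever $X\notin\Syl_p(G)$, where $Q\ge X$ is any Sylow $p$-subgroup.

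What your route buys is a self-contained, purely Sylow-theoretic proof. It uses neither Casolo's formula $|S_G(x)|=\lambda_G(x)|\bN_G(P)|$ nor Alperin's fusion theorem. Combined with the consequence noted after the proposition, it therefore gives a Casolo-free proof of Lemma~\ref{lem:subnormalizer-picky}: if $\Sub_G(x)=\bN_G(P)$, then every Sylow $Q\ni x$ lies in $\bN_G(P)$, hence $Q=P$. The cited approach, in exchange, places the result inside Casolo's finer description in terms of radical $p$-subgroups.

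A few small repairs for the write-up:
\begin{itemize}
\item The factorization comes from the Frattini argument applied to the normal subgroup $\bO^{p'}(\bN_G(X))=\langle T^h\mid h\in\bN_G(X)\rangle$, in which $T$ is a Sylow subgroup. It does not come from $\bO_p(\bN_G(X))$; your displayed formula is right, but the attribution is not.
\item Items 2--4 of your enumerated list are a dead end and should be deleted. Once $g\in L\le\bN_G(X)$, the decomposition $g=g_pg_{p'}$ is never needed.
\item State the induction explicitly: for every $p$-subgroup $X$ with $x\in X$, $\bN_G(X)\le M$, by downward induction on $|X|$, with base case $X\in\Syl_p(G)$.
\item The fact that a subnormal $p$-subgroup lies in $\bO_p$ is proved by induction up the chain, since $\bO_p(L_i)$ is characteristic in $L_i\trianglelefteq L_{i+1}$. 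It is not a downward induction.
\end{itemize}
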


\begin{proof}
This is \cite[Proposition~2.6]{mal}.
\end{proof}

Using this result, Malle found alternative proofs of Lemmas~\ref{lem:subnormalizer-picky} and \ref{lem:subnormalizer-fusion} (see \cite[Corollary~2.7 and Corollary~2.10]{mal}). It is interesting to note that Casolo \cite[Proposition~2.1]{cas89} had proved that for a $p$-element $x$ the subnormalizer subset of $x$ is the union of the normalizers of the radical $p$-subgroups that contain $x$, so $\Sub_G(x)$ is the subgroup generated by the normalizers of the  radical $p$-subgroups that contain $x$. On the other hand, his proof of the main theorem of \cite{cas} relies on concepts like the Quillen complex. At that time, both radical $p$-subgroups and the Quillen complex had barely appeared in the representation theory literature.

\section{Evseev's conjecture and picky elements}

Another mathematician that is relevant here is Anton Evseev. His work, in my view, has received less attention than it deserves. In \cite{evs} he proposed a very interesting refinement of McKay's conjecture.

\subsection{Evseev's refinement of McKay}

We start by recalling its statement.
Let $G$ be a finite group, let $p$ be a prime, let $P\in \Syl_p(G)$, and write $H=\bN_G(P)$. In \cite{evs}, Evseev proposed a refinement of the McKay conjecture in terms of congruences modulo characters induced from suitable local subgroups. Since this is closely related to the point of view of this paper, I recall the relevant part of his notation.

If $X$ is a finite group, let $\mathcal C(X)=\mathbb Z\Irr(X)$, and set
$$
\mathcal C_p(X)=\mathbb Z\{\chi\in \Irr(X)\mid p\mid \chi(1)\}.
$$
If $S$ is a downward closed set of subgroups of $P$, let $I(X,P,S)$ be the subgroup of $\mathcal C(X)$ spanned by the characters $\theta^X$, where $\theta\in \mathcal C(L)$ for some subgroup $L\le X$ such that $L\cap P\in \Syl_p(L)$ and $L\cap P\in S$. If $H\ge \bN_G(P)$, let
$$
S(G,P,H)=\{Q\le P\mid Q\le P^t\text{ for some }t\notin H\}.
$$
Then $(\mathrm{IRC\text{-}Syl})$ asks for a signed bijection $F:\pm\Irr_{p'}(G)\to \pm\Irr_{p'}(H)$ such that
$$
F(\chi)-\chi_H\in I(H,P,S(G,P,H))
$$
for every $\chi\in \pm\Irr_{p'}(G)$.
The weaker condition $(\mathrm{WIRC\text{-}Syl})$ asks for such a signed bijection satisfying
$$
F(\chi)-\chi_H\in \mathcal C_p(H)+I(H,P,S(G,P,H))
$$
for every $\chi\in \pm\Irr_{p'}(G)$.

Evseev was aware that $(\mathrm{IRC\text{-}Syl})$ does not hold in general, but he found only a small number of counterexamples, all of them related to twisted groups of Lie type. In particular, he pointed out that the sporadic counterexamples he found all contain one of the groups $\PSU(3,q)$ as a subgroup. His Conjecture~1.5 states that $(\mathrm{IRC\text{-}Syl})$ holds when $G$ has abelian Sylow $p$-subgroups. He also mentioned that $(\mathrm{WIRC\text{-}Syl})$ seemed to stand on less firm ground than Conjecture~1.5: since there are comparatively few cases in which $(\mathrm{IRC\text{-}Syl})$ fails, it is not easy to decide what the right weakening should be, and the correct condition might well be stronger rather than weaker than $(\mathrm{WIRC\text{-}Syl})$.

\subsection{Evseev's condition and picky elements}

The relation with the strong picky conjecture is the following.

\begin{proposition}
Assume that $(\mathrm{IRC\text{-}Syl})$ holds for $G$. Then there exists a signed bijection $f:\pm\Irr_{p'}(G)\to \pm\Irr_{p'}(\bN_G(P))$ such that for every $\chi\in \Irr_{p'}(G)$ there exists $\varepsilon_\chi\in\{1,-1\}$ with $f(\chi)(x)=\varepsilon_\chi\chi(x)$ for every picky element $x\in P$.

In particular, Evseev's Conjecture~1.5 implies this conclusion whenever $G$ has abelian Sylow $p$-subgroups.
\end{proposition}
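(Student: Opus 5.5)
Take $F$ from $(\mathrm{IRC\text{-}Syl})$ with $H=\bN_G(P)$, and let $f=F$. For $\chi\in\Irr_{p'}(G)$ write $F(\chi)=\varepsilon_\chi\psi$ with $\psi\in\Irr_{p'}(H)$ and $\varepsilon_\chi\in\{1,-1\}$. Here $f(\chi)(x)$ means $\varepsilon_\chi\psi(x)$, the value of the virtual character $F(\chi)$. The claim then reduces to showing that every generalized character in $I(H,P,S(G,P,H))$ vanishes at every picky $x\in P$. Indeed, in that case, evaluating $F(\chi)-\chi_H$ at $x$ gives $F(\chi)(x)=\chi(x)$, so $\psi(x)=\varepsilon_\chi\chi(x)$. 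The sign $\varepsilon_\chi$ comes from $F$ alone and does not depend on $x$, which is exactly the uniformity in $x$ required by the statement.

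For the vanishing, it suffices to treat a generator $\theta^H$. Here $L\le H$, $Q:=L\cap P\in\Syl_p(L)$, $Q\in S(G,P,H)$, and $\theta\in\mathcal C(L)$. By the induction formula, $\theta^H(x)\ne0$ forces some $H$-conjugate $x^h$ to lie in $L$. Since $x^h$ is a $p$-element of $L$, a further conjugation inside $L$ puts it in $Q$. So it is enough to show that no $H$-conjugate of $x$ lies in $Q$. Suppose instead that $x^h\in Q$ with $h\in H$. Because $Q\in S(G,P,H)$, there is $t\notin H$ with $Q\le P^t$, hence $x^h\in P\cap P^t$. Now $x^h$ is picky, since $h$ normalizes $P$ and the number of Sylow subgroups containing an element is conjugation invariant. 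Therefore $P=P^t$, so $t\in\bN_G(P)=H$, a contradiction. Hence $\theta^H$ vanishes on all picky elements of $P$, and by linearity so does everything in $I(H,P,S(G,P,H))$.

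The last sentence of the statement follows immediately. Evseev's Conjecture~1.5 asserts $(\mathrm{IRC\text{-}Syl})$ for groups with abelian Sylow $p$-subgroups, so the first part applies to them.

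The only delicate point is bookkeeping rather than mathematics. One has to check that the signed bijection convention ($F(-\chi)=-F(\chi)$) is compatible with how $f(\chi)$ is read as an honest irreducible character up to sign, and that "$\chi_H$" means restriction. Both match the stated form. The key observation is that picky elements of $P$ avoid, up to $H$-conjugacy, every member of $S(G,P,H)$; this is essentially Lemma~\ref{lem:picky-fusion}'s argument.
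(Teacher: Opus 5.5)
Your proof is correct and follows essentially the same route as the paper. You reduce to the vanishing of every element of $I(H,P,S(G,P,H))$ at the picky elements of $P$, and you obtain this from the induction formula, a Sylow conjugation inside $L\le H$, and pickiness forcing $P^t=P$ for the $t\notin H$ with $Q\le P^t$. Writing $F(\chi)=\varepsilon_\chi\psi$ with $\psi\in\Irr_{p'}(H)$, so that $\psi(x)=\varepsilon_\chi\chi(x)$, also handles the sign bookkeeping slightly more cleanly than the paper's final sentence does.
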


\begin{proof}
Set $H=\bN_G(P)$, and let $f$ be a signed bijection satisfying $(\mathrm{IRC\text{-}Syl})$. It is enough to show that every element of $I(H,P,S(G,P,H))$ vanishes on the picky elements of $P$.

Let $Q\in S(G,P,H)$, and let $L\le H$ be such that $Q\in \Syl_p(L)$. Let $\delta\in \mathcal C(L)$, and let $x\in P$ be picky. If $(\delta^H)(x)\ne 0$, then by the formula for induced characters $x$ is $H$-conjugate to an element of $L$. Since $x$ is a $p$-element and $Q$ is a Sylow $p$-subgroup of $L$, it follows that $x\in Q^h$ for some $h\in H$. Hence $x\in P\cap P^{th}$ for some $t\notin H$ with $Q\le P^t$. Since $x$ is picky, this implies $P=P^{th}$, so $th\in H$, and therefore $t\in H$, a contradiction. Thus $(\delta^H)(x)=0$ for every picky element $x\in P$.

Now let $\chi\in \Irr_{p'}(G)$. Since $f(\chi)-\chi_H\in I(H,P,S(G,P,H))$, the previous paragraph gives $f(\chi)(x)=\chi(x)$ if $f(\chi)\in \Irr_{p'}(H)$ and $f(\chi)(x)=-\chi(x)$ if $f(\chi)\in -\Irr_{p'}(H)$, for every picky element $x\in P$. This proves the result.
\end{proof}

The comparison with the strong picky conjecture is quite close. In Evseev's
Tables~6.1--6.3 the failures of $(\mathrm{IRC\text{-}Syl})$ coincide with the
failures of the strong picky conjecture, except for two cases. The first is
$(J_2,3)$: here there are no picky elements, so the strong picky conjecture
holds vacuously, whereas $(\mathrm{IRC\text{-}Syl})$ fails. The second is
$(\PSU_5(2),2)$: in this case $(\mathrm{IRC\text{-}Syl})$ fails, whereas the
strong picky conjecture holds. Thus the comparison suggests that the stronger
statement envisaged by Evseev may be closer to the right one.

If $\mathcal X$ is a subset of a finite group $X$, let
$$
\mathcal C_p^{\mathcal X}(X)=
\mathbb Z\{\chi\in\Irr^{\mathcal X}(X)\mid p\mid \chi(1)\}.
$$

\begin{conjecture}
Let $\mathcal P$ be the set of picky elements of $P$. Then there exists a signed bijection $F:\pm\Irr^{\mathcal P}(G)\to \pm\Irr^{\mathcal P}(H)$ such that $F(\chi)(1)_p=\chi(1)_p$ and
$$
F(\chi)-\chi_H\in \mathcal C_p^{\mathcal P}(H)+I(H,P,S(G,P,H))
$$
for every $\chi\in \pm\Irr^{\mathcal P}(G)$.
\end{conjecture}

This may be viewed as a candidate for the kind of stronger statement that Evseev had in mind when discussing $(\mathrm{WIRC\text{-}Syl})$.

\begin{remark}
Evseev also proposed a Brou\'e-type version of his refinement. In this direction it is important to keep apart perfect isometries from weaker generalized character correspondences. Robinson gave a beautiful short argument showing that, under natural hypotheses on the Sylow normalizer, the existence of a perfect isometry between the principal block of $G$ and the principal block of $\bN_G(P)$ forces the non-exceptional characters in the principal block of $G$ to be constant and non-zero on the $p$-singular elements. He used this to rule out such a perfect isometry for the principal $2$-blocks of the Suzuki groups \cite{rob}.

Later, Eaton found that something weaker still survives. In \cite{eat}, he studied perfect generalized characters inducing the Alperin--McKay correspondence for blocks with non-abelian TI defect groups. He proved the existence of such generalized characters for non-abelian TI defect groups of order $p^3$, and also for the Suzuki groups ${}^2B_2(q)$ and the groups $\PSU_3(q)$.

This gives another indication that TI defect groups often retain substantial local-global character-theoretic structure even when the strongest possible form of a Brou\'e-type correspondence is not available. In the language of the present paper, the analogous phenomenon is that the strong picky conjecture may fail on some picky classes, while weaker value-preserving or degree-preserving correspondences may still exist.
\end{remark}

\subsection{The self-normalizing case}

We take this opportunity to present a proof of the self-normalizing case of
Evseev's condition $(\mathrm{IRC\text{-}Syl})$. We shall use the following
terminology. Recall that a subgroup $Q\leq P$ is said to be picky if
$$
        Q\leq P^g \quad\Longrightarrow\quad P^g=P
$$
for every $g\in G$. This notion can be useful in this generality. In fact, it
appears in Green's correspondence.

\begin{theorem}\label{thm:evseev-self-normalizing}
Let $G$ be a $p$-solvable group, and let $P\in\Syl_p(G)$ be self-normalizing.
Then, for every $\chi\in\Irr_{p'}(G)$, there exist a linear character
$\lambda_\chi\in\Irr(P)$ and a character (or zero) $\Delta_\chi$ such that
$$
        \chi_P=\lambda_\chi+\Delta_\chi,
$$
where $\Delta_\chi$ is a sum of characters induced from non-picky subgroups
of $P$. Moreover, the map
$$
        \chi\mapsto \lambda_\chi
$$
is a bijection from $\Irr_{p'}(G)$ onto $\Irr(P)$. In particular,
$(\mathrm{IRC\text{-}Syl})$ holds for $G$.
\end{theorem}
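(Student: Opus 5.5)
The plan is to argue by induction on $|G|$, following the structure that Navarro and Isaacs used for $p'$-degree characters and self-normalizing Sylow subgroups in $p$-solvable groups. First I will take from that theory, and take for granted here, that $\chi_P$ has a unique linear constituent $\lambda_\chi$, occurring with multiplicity one. I will also take for granted that $\chi\mapsto\lambda_\chi$ is a bijection $\Irr_{p'}(G)\to\Irr(P/P')=\Irr_{p'}(P)$.

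The new content is the claim about $\Delta_\chi=\chi_P-\lambda_\chi$. The convenient reformulation is this: a subgroup $Q\le P$ is non-picky exactly when $Q\le P\cap P^g$ for some $g\notin \bN_G(P)=P$. Hence it suffices to write $\Delta_\chi$ as a nonnegative combination of characters $\mu^P$ with $\mu$ a character of some $P\cap P^g$, $g\notin P$. Once this holds, $S(G,P,P)$ is precisely the set of non-picky subgroups. Every $\theta^P$ with $\theta$ a character of a non-picky $Q$ then lies in $I(P,P,S(G,P,P))$, since $Q\in\Syl_p(Q)$. So $F(\chi)=\lambda_\chi$ gives $F(\chi)-\chi_P=-\Delta_\chi\in I(P,P,S(G,P,P))$, which is $(\mathrm{IRC\text{-}Syl})$ with $H=P$.

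For the decomposition, the natural source of induced-from-$P\cap P^g$ terms is Mackey's formula. Let $N=\bO_{p'}(G)$, which is nontrivial unless $P\unlhd G$; in that case $G=P$ and there is nothing to prove. Let $\theta\in\Irr(N)$ lie under $\chi$. Since $\chi(1)$ is a $p'$-number, $P$ fixes some $G$-conjugate of $\theta$, so we may take $\theta$ $P$-invariant. Let $\theta^*\in\Irr(\bC_N(P))$ be its Glauberman correspondent, and let $T=G_\theta\ge P$. By Clifford correspondence $\chi=\psi^G$ with $\psi\in\Irr_{p'}(T\mid\theta)$. By Mackey,
$$
\chi_P=\psi_P+\sum_{g}\bigl(\psi^g{}_{P\cap T^g}\bigr)^P,
$$
where the sum runs over the nontrivial $(T,P)$-double cosets. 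Each summand is induced from $P\cap T^g$, and $P\cap T^g$ sits inside a Sylow subgroup $P_1^g$ of $T^g$. The point to check is that $P_1^g\ne P$, possibly after adjusting $g$ within its double coset, so that these subgroups are non-picky. This should follow from $P=\bN_G(P)$ together with the fact that the double coset $TgP$ is not $TP=T$. If $T<G$, the induction hypothesis applied to $T$ handles $\psi_P$. Here $P$ is still self-normalizing in $T$, and non-picky in $T$ implies non-picky in $G$.

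That leaves the case where $\theta$ is $G$-invariant. Here I would pass to $G/N$, or to a character triple isomorphic to it, and use that $\chi_P$ agrees with $\theta^*$-twisted data; Isaacs' theory of $p$-special characters and Glauberman correspondence makes $\chi_P$ factor as $\beta_P\gamma_P$ with $\gamma$ $p$-special. Then the linear constituent comes from $\beta$, and the remaining constituents come from $G/N$ by induction. I expect the main obstacle to be exactly this step, namely proving that the non-linear part of $\gamma_P$ (or of the $P$-restriction of the canonical extension of $\theta$) decomposes into characters induced from non-picky subgroups. My first attempt would be to use that, for a $p'$-group $N$ acted on by $P$, $\theta_{\bC_N(P)}$-type data give $\theta$ restricted to $NP$ as $\theta^*$ plus characters induced from stabilizers of nontrivial $P$-orbits on $N$. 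Those stabilizers are $P\cap P^n$ with $n\in N\setminus \bC_N(P)$, and $n\notin P=\bN_G(P)$, so these subgroups are visibly non-picky.
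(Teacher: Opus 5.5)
Your reduction rests on a false claim. For $p$-solvable $G$, $\bO_{p'}(G)=1$ does \emph{not} force $P\unlhd G$; it only gives $\bO_p(G)>1$. For instance, $G=S_4$ with $p=2$ has a self-normalizing Sylow $2$-subgroup $P\cong D_8$ and $\bO_{2'}(G)=1$, yet $P$ is not normal.

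Moreover, the case you do analyse is empty. With $N=\bO_{p'}(G)$ you have $\bC_N(P)\le N\cap\bN_G(P)=N\cap P=1$. By the Glauberman correspondence, $1_N$ is therefore the only $P$-invariant irreducible character of $N$. So $\theta=1_N$, $T=G$, and the Mackey sum has no terms. Your ``$G$-invariant $\theta$'' case is simply $\chi\in\Irr(G/N)$, with no $\theta^*$-twisting or $p$-special factorization involved. After this trivial reduction you are exactly in the situation $\bO_{p'}(G)=1$ with $P$ not normal. The proposal never treats that situation, and it is where all the content of the theorem lies.

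The missing idea is to work over $L=\bO_p(G)>1$ instead of $\bO_{p'}(G)$. Put $\nu=(\lambda_\chi)_L$, the unique $P$-invariant irreducible constituent of $\chi_L$ \cite[Lemma~9.3]{navmc}. Let $T=G_\nu\ge P$, and write $\chi=\psi^G$ with $\psi\in\Irr(T\mid\nu)$.

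Here your Mackey argument does real work. For $g\notin T$, the subgroup $P\cap T^g$ lies in a Sylow $p$-subgroup $P^{tg}$ of $T^g$ with $t\in T$. If $P^{tg}=P$, then $tg\in\bN_G(P)=P\le T$, which is impossible. So every term with $g\notin T$ is induced from a non-picky subgroup.

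For the remaining term $\psi_P$, the linear character $\nu$ extends to a linear $\mu\in\Irr(T)$ \cite[Lemma~5.11]{navmc}. Gallagher then gives $\psi=\tau\mu$ with $\tau\in\Irr(T/L)$. The inductive hypothesis applies to $T/L$ even when $T=G$, because $L>1$. Finally, $\mu_P\alpha^P=(\mu_Q\alpha)^P$, and a subgroup of $P$ that is non-picky in $T$ is non-picky in $G$.

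This is essentially the paper's proof. The paper handles the part of $\chi_P$ not coming from $\psi$ through $\chi_T=\psi+\Xi$ and the stabilizers $P_\eta$ of constituents $\eta\ne\nu$ of $\chi_L$. Your double-coset version is an equally valid, slightly more direct way to do that step, once it is applied over $\bO_p(G)$ rather than $\bO_{p'}(G)$.
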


\begin{proof}
We prove the result by induction on $|G|$, following the proof of
\cite[Theorem~9.4]{navmc}. Let $\chi\in\Irr_{p'}(G)$. Since $\chi(1)$ is prime
to $p$, the restriction $\chi_P$ has a linear constituent. We shall denote by
$\lambda_\chi$ the linear constituent selected in the proof of
\cite[Theorem~9.4]{navmc}. We prove that
$$
        \chi_P-\lambda_\chi
$$
is a sum of characters induced from non-picky
subgroups of $P$.

Let $K=\mathbf O_{p'}(G)$. As in the proof of \cite[Theorem~9.4]{navmc}, we
have $K\leq \ker(\chi)$. Thus we may replace $G$ by $G/K$. Furthermore, if $Q\leq P$, then $Q$ is
picky as a subgroup of $G$ if and only if $QK/K$ is picky as a subgroup of
$G/K$. Thus we may assume that $K=1$.

Put $L=\mathbf O_p(G)$. Since $G$ is $p$-solvable and
$\mathbf O_{p'}(G)=1$, we have $L>1$. Let
$$
        \nu=(\lambda_\chi)_L .
$$
By \cite[Lemma~9.3]{navmc}, and since $\bN_G(P)=P$, the character $\nu$ is the
unique $P$-invariant irreducible constituent of $\chi_L$.

Let $T=G_\nu$. Then $P\leq T$. Let $\psi\in\Irr(T\mid\nu)$ be the Clifford
correspondent of $\chi$ over $\nu$. By Clifford theory, as in the proof of
\cite[Theorem~9.4]{navmc},
$$
        \chi_T=\psi+\Xi,
$$
where no irreducible constituent of $\Xi$ lies over $\nu$.

We first prove that $\Xi_P$ is a sum of characters
induced from non-picky subgroups of $P$. Let $\rho\in\Irr(P)$ be an irreducible constituent of
$\Xi_P$, and let $\eta\in\Irr(L)$ be an irreducible constituent of $\rho_L$.
Then $\eta$ is an irreducible constituent of $\chi_L$, but $\eta\ne\nu$,
because no irreducible constituent of $\Xi$ lies over $\nu$.

Let $P_\eta$ be the stabilizer of $\eta$ in $P$. By Clifford theory,
$\rho=\alpha^P$ for some $\alpha\in\Irr(P_\eta)$. We claim that $P_\eta$ is
non-picky. Since $\eta$ is an irreducible constituent of $\chi_L$, Clifford
theory gives $\eta=\nu^g$ for some $g\in G$. Hence $G_\eta=T^g$. Let
$R\in\Syl_p(T^g)$ with $P_\eta\leq R$. Since $T^g$ contains the Sylow
$p$-subgroup $P^g$ of $G$, we have $|R|=|P|$. Choose $y\in G$ such that
$R\leq P^y$. Then
$$
        P_\eta\leq P\cap P^y .
$$
If $P^y=P$, then $R=P$, and hence $P\leq T^g=G_\eta$. Thus $\eta$ would be
$P$-invariant, contradicting the uniqueness of $\nu$ as the $P$-invariant
constituent of $\chi_L$. Therefore $P^y\ne P$, and $P_\eta$ is non-picky.
It follows that $\Xi_P$ is a sum of characters induced
from non-picky subgroups of $P$.

Now \cite[Lemma~5.11]{navmc} gives that $\nu$ extends to $T$. Let
$\mu\in\Irr(T)$ be a linear extension of $\nu$ to $T$. By Gallagher's theorem,
there exists $\tau\in\Irr(T/L)$ such that
$$
        \psi=\tau\mu.
$$
Since $|T/L|<|G|$ and $P/L$ is
self-normalizing in $T/L$, the inductive hypothesis applied to $T/L$ gives
$$
        \tau_{P/L}=\tau_1+\Delta_1,
$$
where $\tau_1\in\Irr(P/L)$ is linear and $\Delta_1$ is a sum of characters induced from non-picky subgroups of $P/L$.

$$
        \psi_P=\tau_P\mu_P=\tau_1\mu_P+\mu_P\Delta_1.
$$
The character $\tau_1\mu_P$ is linear. Moreover, $\mu_P\Delta_1$ is a sum of characters
induced from non-picky subgroups of $P$.

Combining this with the previous paragraph applied to $\Xi_P$, we get
$$
        \chi_P=\psi_P+\Xi_P
        =\tau_1\mu_P+\mu_P\Delta_1+\Xi_P .
$$
Hence $\lambda_\chi=\tau_1\mu_P$ is linear and
$$
        \chi_P=\lambda_\chi+\Delta_\chi,
        \qquad
        \Delta_\chi=\mu_P\Delta_1+\Xi_P,
$$
where $\Delta_\chi$ is a sum of characters induced from non-picky subgroups
of $P$. The bijectivity of $\chi\mapsto\lambda_\chi$ is the bijectivity in
\cite[Theorem~9.4]{navmc}.

Finally, since $\bN_G(P)=P$, the subgroups of $P$ which occur in Evseev's
condition for $H=P$ are precisely the non-picky subgroups of $P$. Therefore the
decomposition above gives $(\mathrm{IRC\text{-}Syl})$ in the self-normalizing
case.
\end{proof}

\section{A comparison with the Eaton--Moret\'o conjecture}

The Eaton--Moret\'o conjecture gives another local-global prediction relating
the heights of characters in a block to the character degrees of a defect
group. I recall its statement in order to compare it with the picky conjecture.
Let $B$ be a $p$-block of a finite group $G$, and let $D$ be a defect group of
$B$. If
$\chi\in\Irr(B)$, its height is defined by
$$
        \chi(1)_p=|G:D|_p p^{h(\chi)}.
$$
Let $mh(B)$ be the smallest positive height of a character in $B$, with
$mh(B)=\infty$ if all characters in $B$ have height zero. Let $mh(D)$ be the
smallest positive integer $h$ such that $D$ has an irreducible character of
degree $p^h$, with $mh(D)=\infty$ if $D$ is abelian. The Eaton--Moret\'o
conjecture asserts that
$$
        mh(B)=mh(D).
$$
This conjecture is still not understood in general.

The picky conjecture gives a different kind of information. It is not a
blockwise statement about the first positive height. It fixes an element $x$ and
looks at the characters which do not vanish at $x$. The picky conjecture
predicts more than the equality of extremal invariants. It predicts a bijection
between two sets of characters attached to a fixed $p$-element, preserving the
$p$-parts of their degrees and the corresponding fields of values. In
particular, it immediately gives the following numerical consequence.

\begin{corollary}
Let $G$ be a finite group, let $p$ be a prime, let $P\in\Syl_p(G)$, and let
$x\in P$ be a picky element for which the picky conjecture holds. Then the
multisets
$$
        (\chi(1)_p\mid \chi\in\Irr^x(G))
        \quad\hbox{and}\quad
        (\psi(1)_p\mid \psi\in\Irr^x(\bN_G(P)))
$$
coincide.
\end{corollary}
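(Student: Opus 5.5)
The corollary follows by reading off condition~(1) of Conjecture~A. The plan is to take a bijection $f:\Irr^x(G)\to\Irr^x(\bN_G(P))$ as in Conjecture~A, whose existence is precisely the hypothesis that the picky conjecture holds for $x$. Condition~(2), on fields of values, is not needed here. Only the bijectivity of $f$ and the degree condition $\chi(1)_p=f(\chi)(1)_p$ will be used.

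Concretely, fix a power $p^a$. The argument shows that the number of $\chi\in\Irr^x(G)$ with $\chi(1)_p=p^a$ equals the number of $\psi\in\Irr^x(\bN_G(P))$ with $\psi(1)_p=p^a$. Since $f$ preserves $p$-parts of degrees, it maps the set $\{\chi\in\Irr^x(G)\mid \chi(1)_p=p^a\}$ into $\{\psi\in\Irr^x(\bN_G(P))\mid \psi(1)_p=p^a\}$. The same holds for $f^{-1}$, because $f(\chi)(1)_p=\chi(1)_p$ can be read in both directions. Hence $f$ restricts to a bijection between these two fibres, and they have the same cardinality.

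As $a$ ranges over all nonnegative integers, the fibres partition both sets. Equal fibre sizes for every $a$ is exactly equality of the two multisets of $p$-parts of degrees.

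There is no real obstacle. The only point to state explicitly is that a bijection preserving a function induces equality of the multisets of values of that function. In particular, this recovers that $|\Irr^x(G)|=|\Irr^x(\bN_G(P))|$. By taking $a=0$, it also gives $|\Irr^x(G)\cap\Irr_{p'}(G)|=|\Irr^x(\bN_G(P))\cap\Irr_{p'}(\bN_G(P))|$, which is McKay's equality since $\Irr_{p'}\subseteq\Irr^x$ by \cite[Corollary~4.20]{navmc}.
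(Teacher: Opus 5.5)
Your proposal is correct and matches the paper, which treats this corollary as an immediate consequence of condition~(1) of Conjecture~A: a bijection preserving $p$-parts of degrees restricts to bijections between the fibres $\{\chi \mid \chi(1)_p=p^a\}$, so the two multisets coincide. Your closing remark is also correct: via $\Irr_{p'}\subseteq\Irr^x$ it recovers McKay's equality, consistent with the paper's remark that the picky conjecture refines McKay.
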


The symmetric group case gives a concrete illustration. Let $G=S_{2^n}$,
$p=2$, and let $x$ have cycle type
$$
        (2^{n-1},2^{n-2},\ldots,2^2,2,1^2).
$$
Then $x$ is picky, and the results of \cite{mar} give
$$
        \{\chi(1)_2\mid \chi\in\Irr^x(S_{2^n})\}
        =
        \{1,2,2^2,\ldots,2^{n-2}\}
        =
        \{\lambda(1)_2\mid \lambda\in\Irr^x(P)\}.
$$
This should be compared with the set of $p$-parts of the degrees of the
irreducible characters of $S_n$. I am not aware of a simple description of
these sets in general, but they certainly have gaps, at least when $p=2$.

It seems likely that, for picky elements,
$$
        \{\chi(1)_p\mid \chi\in\Irr^x(G)\}
        =
        \{\lambda(1)_p\mid \lambda\in\Irr^x(P)\}.
$$

\section{Elements of mixed order}

\subsection{Subnormalizers of mixed-order elements}

Up to this point, the conjectures have been formulated for $p$-elements. The
guiding principle is the same: the relevant local subgroup is the subnormalizer of
the element. We notice that for elements that do not have prime power order the
subnormalizer of $x$ is usually a remarkably small subgroup, that seems to capture the character values at $x$. In fact, by \cite[Lemma~1.1]{cas89}, 
$$
\Sub_G(x)\leq \bigcap_{p\mid o(x)} \Sub_G(x_p),
$$
but it often is an even simpler group than the Sylow normalizer of any of the $p$-parts. 

\begin{conjecture}\label{con:mixed1}
Let $G$ be a finite group and let $x\in G$. Write $x=\prod_p x_p$ for the
decomposition of $x$ into its $p$-parts. Assume that, for every prime $p$ dividing
$o(x)$, the pair $(G,x_p)$ satisfies the strong subnormalizer conjecture. Then there
exists a bijection
$$
f:\Irr^x(G)\longrightarrow \Irr^x(\Sub_G(x))
$$
such that
\begin{enumerate}
\item $\chi(x)=\pm f(\chi)(x)$ for every $\chi\in \Irr^x(G)$;
\item if $x_{p'}$ centralizes a Sylow $p$-subgroup of $G$, then
$$
\chi(1)_p=f(\chi)(1)_p
$$
for every $\chi\in \Irr^x(G)$.
\end{enumerate}
\end{conjecture}

The point of this formulation is that the difficulties for mixed-order elements
should come precisely from the difficulties already present in their prime-power
parts. If one drops the hypotheses, then even the equality of cardinalities may
fail. The known counterexamples already arise from the same simple groups with
nonabelian TI Sylow subgroups that occur in the prime-power setting: for instance,
one finds failures for elements of order $6$ or $12$ in $\PSU_3(3)$, for elements
of order $10$ in $\PSU_3(4)$, and for elements of order $15$ in $McL$.

There is also a separate fusion phenomenon. The analogue of
Lemma~\ref{lem:subnormalizer-fusion} need not hold for elements of mixed order.
For instance, if
        $x=(1,2,3,4)(5,6)\in A_9$,
then $x^G\cap\Sub_G(x)$ splits into two $\Sub_G(x)$-classes. The same
phenomenon occurs for a class of elements of order $6$ in
$\mathrm{PerfectGroup}(29160,4)$. In both examples, precisely one of the
$\Sub_G(x)$-classes has the ``right values''. This is not by itself a
counterexample to the conjecture above, since the values are taken at the
chosen element $x$. Nevertheless, these examples, together with the limited evidence available so far,
make me regard Conjecture~\ref{con:mixed1} more as a question than as a conjecture. 

\subsection{Hall subgroups and sets of primes}

The same point of view also gives a natural replacement, for groups with
nilpotent Hall $\pi$-subgroups, for the false extension of McKay's conjecture
to sets of primes $\pi$. If $H$ is a nilpotent Hall $\pi$-subgroup of $G$,
one might hope for
$$
|\Irr_{\pi'}(G)|=|\Irr_{\pi'}(\bN_G(H))|.
$$
However, this is false in general.

The reason is that $\Irr_{\pi'}(G)$ is not the right set of characters to look at:
a character of $\pi'$-degree need not be nonzero on a given $\pi$-element. Thus one
is led instead to the characters that do not vanish on the relevant elements
of the Hall subgroup. If $H$ is a Hall $\pi$-subgroup of $G$, we say that
$x\in H$ is $H$-picky if, whenever $x\in H^g$, one has
$g\in \bN_G(H)$. Let
$$
\mathcal H=\{\,x\in H\mid x \text{ is } H\text{-picky}\,\}.
$$
This suggests the following conjecture.

\begin{conjecture}
Let $G$ be a finite group, let $\pi$ be a set of primes, and let $H$ be a nilpotent
Hall $\pi$-subgroup of $G$. Then there exists a bijection
$$
f:\Irr^{\mathcal H}(G)\longrightarrow \Irr^{\mathcal H}(\bN_G(H))
$$
such that
\begin{enumerate}
\item $\chi(1)_\pi=f(\chi)(1)_\pi$ for every $\chi\in \Irr^{\mathcal H}(G)$;
\item $\mathbb Q(\chi(x))=\mathbb Q(f(\chi)(x))$ for every
      $\chi\in \Irr^{\mathcal H}(G)$ and every $x\in \mathcal H$;
\item $f(\Irr^x(G))=\Irr^x(\bN_G(H))$ for every $x\in \mathcal H$.
\end{enumerate}
\end{conjecture}

This formulation appears to fix the classical counterexamples. For example,
Navarro and Tiep observed (see \cite{ns}) that the $\pi$-McKay statement fails for
$$
G=J_4,\qquad \pi=\{5,7\},
$$
where a Hall $\pi$-subgroup $H$ is cyclic of order $35$. In this example,
$$
|\Irr_{\pi'}(G)|=30\neq 25=|\Irr_{\pi'}(\bN_G(H))|.
$$
But if $x$ is a generator of $H$, then $x$ is $H$-picky and
$$
|\Irr^x(G)|=25=|\Irr^x(\bN_G(H))|.
$$
Moreover, the nonzero values at $x$ of the irreducible characters of $G$ and
of $\bN_G(H)$ occur with the same multiplicities, namely
$$
15\text{ times }\pm 1,\qquad
5\text{ times }\pm\frac{-1+\sqrt{-7}}{2},\qquad
5\text{ times }\pm\frac{-1-\sqrt{-7}}{2}.
$$
All these characters have $\pi'$-degree. Thus the failure of the classical
formulation does not come from the values at the relevant $\pi$-element. It comes
from the fact that $\Irr_{\pi'}(G)$ is too large: in this example, there are five
characters of $\pi'$-degree that vanish at $x$. The picky version removes exactly
these characters and preserves the values at $x$.

Thus, for elements of mixed order, the natural local subgroup is still
$\Sub_G(x)$, and the relevant character sets are defined by nonvanishing at the
elements under consideration, rather than by degree conditions alone.

\section{Final remarks}

I expect the conjectures in this paper to be compatible with other local-global conjectures and, in particular, with all the refinements of McKay. I will refrain from stating them. Theorem~\ref{thm:sylow-order-p} shows the compatibility of the picky conjecture with the Isaacs-Navarro refinement, in the particular case of Sylow subgroups of order $p$. 
I have not discussed block versions of these conjectures either. This was done in \cite{mr}.  

The notion of pickiness can also be considered relative to subgroups other
than Sylow subgroups. If $L\leq G$ and $x\in L$, say that $x$ is $L$-picky if
$$
        x\in L^g \quad\Longrightarrow\quad L^g=L
$$
for every $g\in G$. For $L=P\in\Syl_p(G)$ this is the notion used throughout
the paper. The previous section uses the same idea for Hall subgroups.

The interest of the sets $\Irr^x(G)$, for $x$ picky, and of the sets
$\Irr^{\mathcal P}(G)$ seems to go beyond the picky conjectures themselves.
We have already seen one instance of this. The McKay-type equality for sets of
primes is false in general, even for nilpotent Hall subgroups; see
\cite{ns}. In the picky setting the corresponding formulation uses
subnormalizers and characters not vanishing at a suitable element, and it seems
to behave better.

Here is another example. If $\mathcal X$ is a set of elements of $G$, we write
$\mathbb Q(\chi_{\mathcal X})$ for the field generated over $\mathbb Q$ by
the values $\chi(x)$, with $x\in\mathcal X$. Navarro and Tiep considered, in
their work on fields of values of characters of $p'$-degree, a condition
asserting the existence of a McKay correspondence
$$
        *:\Irr_{p'}(G)\longrightarrow \Irr_{p'}(\bN_G(P))
$$
such that
$$
        \mathbb Q(\chi_P)=\mathbb Q(\chi^*_P)
$$
for every $\chi\in\Irr_{p'}(G)$. This condition is false in general: the
counterexample found by Sambale is $\mathrm{PrimitiveGroup}(64,38)$ for $p=3$.

The global picky conjecture gives the corresponding statement for the values
on picky elements immediately.

\begin{corollary}
Let $G$ be a finite group, let $p$ be a prime, let $P\in\Syl_p(G)$, and let
$\mathcal P$ be the set of picky elements contained in $P$. Suppose that the
global picky conjecture holds for $G$, with bijection
$$
        f:\Irr^{\mathcal P}(G)\longrightarrow \Irr^{\mathcal P}(\bN_G(P)).
$$
Then
$$
        \mathbb Q(\chi_{\mathcal P})=\mathbb Q(f(\chi)_{\mathcal P})
$$
for every $\chi\in\Irr^{\mathcal P}(G)$.
\end{corollary}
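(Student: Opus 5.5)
The corollary should follow formally from condition~(2) of Conjecture~\ref{conj:global-picky}, applied one picky element at a time. Fix $\chi\in\Irr^{\mathcal P}(G)$ and write $\psi=f(\chi)\in\Irr^{\mathcal P}(\bN_G(P))$.

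The first step is to recall that, for every $x\in\mathcal P$, condition~(2) gives
$$\mathbb Q(\chi(x))=\mathbb Q(\psi(x)).$$
This holds for every $x\in\mathcal P$, not only for those with $\chi(x)\neq 0$. If $\chi(x)=0$, both fields are just $\mathbb Q$, and condition~(3) also forces $\psi(x)=0$. So no case distinction is needed.

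The second step is to take composita. The field $\mathbb Q(\chi_{\mathcal P})$ is generated over $\mathbb Q$ by the values $\chi(x)$ with $x\in\mathcal P$, so it is the compositum
$$\mathbb Q(\chi_{\mathcal P})=\prod_{x\in\mathcal P}\mathbb Q(\chi(x))$$
inside $\mathbb C$. The same description holds for $\mathbb Q(\psi_{\mathcal P})$. Since the two families of subfields agree termwise by the first step, their composita coincide. This gives $\mathbb Q(\chi_{\mathcal P})=\mathbb Q(f(\chi)_{\mathcal P})$.

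There is no real obstacle here; the only point to check is that the field equality in condition~(2) is imposed for each $x\in\mathcal P$ separately and for every $\chi\in\Irr^{\mathcal P}(G)$, which is how Conjecture~\ref{conj:global-picky} is stated. The content of the corollary lies in the contrast with Sambale's counterexample $\mathrm{PrimitiveGroup}(64,38)$: restricting to picky elements, rather than all of $P$, makes the statement a formal consequence of the global picky conjecture.
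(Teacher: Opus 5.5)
Your proposal is correct and matches the paper's proof: condition~(2) gives $\mathbb Q(\chi(x))=\mathbb Q(f(\chi)(x))$ for each $x\in\mathcal P$, so the fields generated by all these values coincide. Your remark that condition~(3) forces $f(\chi)(x)=0$ when $\chi(x)=0$ is correct, by injectivity of $f$, but it is not needed, since condition~(2) is imposed for every $x\in\mathcal P$ anyway.
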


\begin{proof}
For every $x\in\mathcal P$, the global picky conjecture gives
$$
        \mathbb Q(\chi(x))=\mathbb Q(f(\chi)(x)).
$$
The field generated by all the values $\chi(x)$, with $x\in\mathcal P$, is
therefore the same as the field generated by all the values $f(\chi)(x)$,
with $x\in\mathcal P$.
\end{proof}

Next, I record an extension of Corollary~\ref{cor:ratreal}. 
One consequence of the conjectures is that $\QQ_G(x)=\QQ_{\Sub_G(x)}(x)$ for any $x$ $p$-element, where $\mathbb{Q}_G(x)$ is the field generated over $\mathbb Q$ by the values $\chi(x)$ for $\chi\in\Irr(G)$. I expect this to be true for arbitrary $x$, but I can prove it for $x$ a $p$-element as a consequence of Lemma~\ref{lem:subnormalizer-fusion}.
  
  \begin{theorem}
  Let $G$ be a finite group and let $x\in G$ be a $p$-element. Then $\QQ_G(x)=\QQ_{\Sub_G(x)}(x)$.
\end{theorem}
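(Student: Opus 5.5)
The plan is to translate the field equality into a statement about Galois conjugacy of $x$, and then to use Lemma~\ref{lem:subnormalizer-fusion} to move between $G$ and $R=\Sub_G(x)$. Let $n=|G|$ and $\mathcal G=\Gal(\QQ_n/\QQ)$. Identify $\sigma\in\mathcal G$ with $k\in(\mathbb Z/n\mathbb Z)^\times$ via $\sigma(\zeta_n)=\zeta_n^k$. For any group $X\le G$ containing $x$ and any $\psi\in\Irr(X)$ we have $\sigma(\psi(x))=\psi(x^k)$. Hence $\sigma$ fixes $\QQ_X(x)$ if and only if $\psi(x^k)=\psi(x)$ for all $\psi\in\Irr(X)$. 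Since the irreducible characters separate conjugacy classes, this holds if and only if $x^k$ is $X$-conjugate to $x$. By Galois theory, $\QQ_X(x)$ is the fixed field of the subgroup
$$
\mathcal G_X(x)=\{\sigma_k\in\mathcal G\mid x^k\ \text{is $X$-conjugate to}\ x\}.
$$
So it suffices to prove $\mathcal G_G(x)=\mathcal G_R(x)$.

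The inclusion $\mathcal G_R(x)\subseteq\mathcal G_G(x)$ is immediate, because $R\le G$. For the reverse inclusion, suppose $x^k=x^g$ for some $g\in G$. The element $y=x^k$ is a power of $x$, so it lies in $\langle x\rangle\le R$, and it is a $p$-element that is $G$-conjugate to $x$. Lemma~\ref{lem:subnormalizer-fusion} applies directly to $y\in\Sub_G(x)$ and shows that $x$ and $y$ are conjugate in $R$. Thus $\sigma_k\in\mathcal G_R(x)$, and the two Galois subgroups coincide. Taking fixed fields then gives $\QQ_G(x)=\QQ_R(x)$.

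I do not expect any step to be a serious obstacle, since the fusion lemma does all the work. The only point needing care is the Galois-theoretic reformulation, namely that the fixed field of $\mathcal G_X(x)$ is exactly $\QQ_X(x)$. This follows because $\QQ_X(x)\subseteq\QQ_n$ (as $|X|$ divides $n$), and because $\sigma$ fixes every $\psi(x)$ precisely when the column of $x^k$ in the character table of $X$ equals the column of $x$. One could also work directly with the $\Gal$-action on the conjugacy classes of $p$-elements, but the version with $n=|G|$ avoids having to compare cyclotomic fields of different levels for $G$ and $R$.
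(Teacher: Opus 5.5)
Your proposal is correct and follows essentially the same route as the paper. The paper likewise reduces to showing that $\sigma\in\Gal(\QQ_n/\QQ)$ fixes all values at $x$ in $G$ exactly when it does so in $\Sub_G(x)$, observes that $x^{\sigma}$ is a power of $x$ and hence lies in $\Sub_G(x)$, applies Lemma~\ref{lem:subnormalizer-fusion}, and concludes by the Galois correspondence.
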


\begin{proof}
Write $|G|=n$ and let $\sigma\in\Gal(\QQ_n/\QQ)$. Let $\chi\in\Irr(G)$. Note that $\chi(x)^{\sigma}=\chi(x)$ for every $\chi$ in $\Irr(G)$ if and only if $x$ and $x^{\sigma}$ are $G$-conjugate. Similarly, $\sigma$ fixes every character in $\Irr(\Sub_G(x))$ if and only if $x$ and $x^{\sigma}$ are $\Sub_G(x)$-conjugate. Since $x^{\sigma}$ is a power of $x$, 
$x^{\sigma}$ belongs to $\Sub_G(x)$. We conclude that $x$ and $x^{\sigma}$ are $G$-conjugate if and only if they are $\Sub_G(x)$-conjugate. Thus $\sigma$ fixes $\chi(x)$ for every $\chi\in\Irr(G)$ if and only if $\sigma$ fixes $\psi(x)$ for every $\psi\in\Irr(\Sub_G(x))$. The result now follows from the Galois correspondence.
\end{proof}

More generally, many questions about $\Irr_{p'}(G)$, or about height zero
characters, should have picky analogues. In \cite{mr} we consider picky versions
of further results and conjectures of \cite{nt}, as well as picky versions of
the Ito--Michler theorem and of Brauer's height zero theorem.

I continue with a different illustration. One of Navarro's questions asks whether
the number of Sylow $p$-subgroups of a finite group can be read from the
character table. If $x\in G$, let $\lambda_G(x)$ be the number of Sylow
$p$-subgroups of $G$ containing $x$.

\begin{theorem}
Let $G$ be a $p$-solvable group and let $P\in\Syl_p(G)$. The following are
equivalent.
\begin{enumerate}
\item The character table of $G$ determines $|\bN_G(P)|$.
\item The character table of $G$ determines the function $\lambda_G$.
\item The character table of $G$ determines $\lambda_G(x)$ for some
      nonidentity $p$-element $x\in G$.
\end{enumerate}
\end{theorem}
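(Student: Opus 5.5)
The plan is to exploit the identity $\lambda_G(x)=(1_N)^G(x)$ for $x\in P$, where $N=\bN_G(P)$, established before Lemma~\ref{lem:psection-representatives}. Since $(1_N)^G(x)=|\bC_G(x)|\,|x^G\cap N|/|N|$, and $|\bC_G(x)|$ is read off the character table, all three conditions are statements about how much of $(1_N)^G$ is visible from the table. I will prove (1)$\Rightarrow$(2)$\Rightarrow$(3)$\Rightarrow$(1).

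For (1)$\Rightarrow$(2), the aim is to show that in a $p$-solvable group, once $|N|$ is known, the character $(1_N)^G$ is determined by the table. First, $|G:N|$ is the number of Sylow $p$-subgroups. Next, $(1_N)^G$ is the permutation character of $G$ on $\Syl_p(G)$, so it should be expressible through $p$-solvable structure visible in the table. Concretely, put $K=\bO_{p'}(G)$ and argue by induction via the chain $\bO^{p',p}$ etc. Normal subgroups, their orders, and hence $K$, $\bO_{p',p}(G)$ and the quotient tables are all table-determined. By Frattini, $G=\bO_{p',p}(G)N$, so the Sylow action factors through the conjugation action of $P$-normalizers in $\bO_{p',p}(G)$. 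Using Glauberman correspondence, one can try to compute $\lambda_G(x)=|\bC_{K}(x)\!:\!\bC_K(P)|$-type quantities from class sizes inside $K$ and $KP$. If this route is too delicate, the fallback is to show directly that $(1_N)^G(x)=|\{Q\in \Syl_p(G): x\in Q\}|$ can be computed from $|N|$ and the Sylow-counting values of sections $G/L$ for $L$ in a table-visible chief series.

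(2)$\Rightarrow$(3) is trivial.

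For (3)$\Rightarrow$(1), take a nonidentity $p$-element $x$ whose $\lambda_G(x)$ is known. Use Casolo's formula $|S_G(x)|=\lambda_G(x)|N|$ from Lemma~\ref{lem:subnormalizer-picky}, together with a table-determined computation of $|S_G(x)|$. In a $p$-solvable group I expect $S_G(x)$ to be controlled by the $p$-solvable series, for instance $S_G(x)=\{g: x\in \bO_{p}(\langle x,g\rangle \text{ mod } \bO_{p'})\}$ after reduction. Alternatively, and more robustly, use $\lambda_G(x)=|\bC_G(x)||x^G\cap N|/|N|$. The number $|x^G\cap N|$ should be expressed via the table, and then $|N|$ can be solved for.

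The main obstacle is exactly this: showing that $|x^G\cap N|$ (equivalently $|S_G(x)|$) is table-determined in $p$-solvable groups without already knowing $N$. I would try reducing modulo $K=\bO_{p'}(G)$ (where $\lambda$ is unchanged). Then $\bO_p(G)>1$, and induction on $G/\bO_p(G)$ gives the result, tracking how $\lambda$ and $|N|$ transform under these table-visible quotients.
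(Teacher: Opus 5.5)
You have the right mechanism, and it is the paper's. For a $p$-element $x\in P$ one has $\lambda_G(x)=(1_N)^G(x)=|\bC_G(x)|\,|x^G\cap N|/|N|$. Since $P$ is a normal Sylow $p$-subgroup of $N$, the $p$-elements of $N$ are exactly the elements of $P$, so $x^G\cap N=x^G\cap P$.

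Once you know that $|x^G\cap P|$ is determined by the character table, every implication takes one line. The table gives $|\bC_G(x)|$, so knowing $|N|$ gives every $\lambda_G(x)$. Conversely, since $|x^G\cap P|\geq 1$, knowing $\lambda_G(x)$ at a single $x$ gives $|N|$.

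That determinacy is exactly the step you leave open as ``the main obstacle''. It is the entire content of the theorem, and the only place where $p$-solvability is used. The paper does not prove it from scratch. It invokes Navarro's theorem \cite{nav98}: for $p$-solvable groups, the character table determines $|x^G\cap P|$. Without this input, or a proof of it, your proposal establishes only (2)$\Rightarrow$(3).

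The substitutes you sketch do not fill the gap, and some of their steps are false.
\begin{itemize}
\item The Frattini argument gives $G=\bO_{p',p}(G)\,\bN_G(P\cap\bO_{p',p}(G))$, not $G=\bO_{p',p}(G)N$. For $G=S_4$ and $p=2$ one has $\bO_{p',p}(G)=V_4$ and $N=D_8$, whose product is $D_8\neq G$.
\item $\lambda$ is not unchanged modulo $K=\bO_{p'}(G)$. Take $G=A_4\times C_3$ and $p=3$. A generator $x$ of the direct factor $C_3$ lies in all four Sylow $3$-subgroups, so $\lambda_G(x)=4$. But $G/K\cong C_3\times C_3$, so $\lambda_{G/K}(xK)=1$. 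An induction along table-visible quotients would therefore have to track changes in $\lambda$ that you do not control.
\item The Casolo route for (3)$\Rightarrow$(1) only replaces the unknown $|x^G\cap P|$ by the unknown $|S_G(x)|$. You have no table-theoretic description of $|S_G(x)|$; the one you suggest for $p$-solvable groups is speculative.
\end{itemize}
The fix is to quote Navarro's result exactly where you need $|x^G\cap P|$.
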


\begin{proof}
Let $N=\bN_G(P)$. For every $p$-element $x\in G$ we have
$$
\lambda_G(x)
=(1_N)^G(x)
=\frac{1}{|N|}
  |\{g\in G\mid x^{g^{-1}}\in N\}|.
$$
Since every $p$-element of $N$ lies in $P$, this gives
$$
        \lambda_G(x)
        =
        \frac{|\bC_G(x)|\, |x^G\cap P|}{|\bN_G(P)|}.
$$
If the character table determines $|\bN_G(P)|$, then it determines
$\lambda_G(x)$ for every $p$-element $x$, because it determines
$|\bC_G(x)|$ and, by \cite{nav98}, it determines $|x^G\cap P|$ for
$p$-solvable groups. This proves that (1) implies (2). The implication
(2) implies (3) is immediate. Conversely, if the table determines
$\lambda_G(x)$ for a nonidentity $p$-element $x$, then the same formula determines
$|\bN_G(P)|$. Thus (3) implies (1).
\end{proof}

In particular, if a picky class can be recognized from the character table, then
$\lambda_G(x)=1$ for this class, and the formula above determines
$|\bN_G(P)|$.

The examples in \cite{mmm} show that there are pairs $(G,p)$ for which $G$
has no nonidentity picky $p$-elements. This suggests the following structural
problem.

\begin{question}
Classify the pairs $(G,p)$ such that $G$ has no nonidentity picky
$p$-elements.
\end{question}

A second question concerns groups for which the subnormalizers of
$p$-elements are as large as possible.

\begin{question}
Classify the pairs $(G,p)$ such that $\Sub_G(x)=G$ for every nonidentity
$p$-element $x\in G$.
\end{question}

Malle calls this the case of almost normal Sylow $p$-subgroups; see the
discussion after \cite[Corollary~2.10]{mal}. Baumeister, Burness, Guralnick and Tong-Viet determine the finite
almost simple groups in which a Sylow $r$-subgroup is contained in a unique
maximal subgroup \cite{bbgtv} (related results have appeared in \cite{mal2}). In a major ongoing project, Guralnick and Tracey are considering the
corresponding problem for elements contained in a unique maximal subgroup
\cite{gt}. The questions above seem natural variations of these problems.

I finish by recording how the subnormalizer conjecture reduces the proof of
McKay to the proof of McKay for groups with an almost normal Sylow
$p$-subgroup.

\begin{corollary}
Assume that Conjecture~B holds for $(G,p)$. If $(G,p)$ is a minimal
counterexample to the McKay conjecture, then
$\Sub_G(x)=G$ for every nonidentity $p$-element $x\in G$.
\end{corollary}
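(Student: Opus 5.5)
We argue by contradiction: we exhibit a nonidentity $p$-element $x$ with $\Sub_G(x)<G$ and use Conjecture~B at $x$ together with minimality to produce a McKay bijection for $G$. Fix $P\in\Syl_p(G)$ and suppose $1\neq x\in P$ satisfies $R:=\Sub_G(x)<G$. By Lemma~\ref{lem:subnormalizer-picky}, $\bN_G(P)\le R$, so $P\in\Syl_p(R)$ and $\bN_R(P)=\bN_G(P)$.

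The first step is to pass from $\Irr^x$ to $\Irr_{p'}$ on both sides. By \cite[Corollary~4.20]{navmc}, $\Irr_{p'}(G)\subseteq\Irr^x(G)$ and $\Irr_{p'}(R)\subseteq\Irr^x(R)$, since $x$ is a $p$-element of both groups. Let $f:\Irr^x(G)\to\Irr^x(R)$ be the bijection provided by Conjecture~B. Condition~(1) says $\chi(1)_p=f(\chi)(1)_p$, so $f$ preserves $p'$-degree. Hence $f$ restricts to a bijection
$$
\Irr_{p'}(G)\longrightarrow\Irr^x(R)\cap\Irr_{p'}(R)=\Irr_{p'}(R),
$$
which gives $|\Irr_{p'}(G)|=|\Irr_{p'}(R)|$.

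The second step uses minimality. Since $(G,p)$ is a minimal counterexample and $R<G$, the McKay conjecture holds for $R$, so
$$
|\Irr_{p'}(R)|=|\Irr_{p'}(\bN_R(P))|=|\Irr_{p'}(\bN_G(P))|.
$$
Combined with the first step, this shows that $G$ satisfies McKay, a contradiction. Therefore $\Sub_G(x)=G$ for every nonidentity $p$-element $x\in P$. Every nonidentity $p$-element of $G$ is conjugate into $P$, and $\Sub_G(x^g)=\Sub_G(x)^g$, so the conclusion holds for all nonidentity $p$-elements.

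The main point needing care is what ``minimal counterexample'' means. As used here it must mean that McKay holds for every proper subgroup of $G$, so that it can be applied to $R$. This is the standard reading: minimize $|G|$ over all finite groups, with $p$ fixed. The other ingredient is \cite[Corollary~4.20]{navmc}, which guarantees that $p'$-degree characters do not vanish at $p$-elements; it is applied in both $G$ and $R$. Nothing beyond Lemma~\ref{lem:subnormalizer-picky} and these facts is needed. If $G$ has no nonidentity $p$-elements, the statement is vacuous; in that case McKay holds trivially anyway, since $P=1$.
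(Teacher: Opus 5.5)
Your proposal is correct and follows the paper's proof essentially step for step. You conjugate $x$ into $P$, use Lemma~\ref{lem:subnormalizer-picky} to get $\bN_G(P)\le\Sub_G(x)$ (hence $\bN_{\Sub_G(x)}(P)=\bN_G(P)$), restrict the Conjecture~B bijection to $p'$-degree characters via \cite[Corollary~4.20]{navmc} in both groups, and apply minimality to the proper subgroup $\Sub_G(x)$; your explicit check that the degree-preserving bijection restricts to $\Irr_{p'}(G)\to\Irr_{p'}(\Sub_G(x))$ just spells out a step the paper states in one line.
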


\begin{proof}
Let $P\in\Syl_p(G)$. Since $(G,p)$ is a counterexample to McKay, $P$ is not
normal in $G$. Suppose that there exists a nonidentity $p$-element $x\in G$
such that $H=\Sub_G(x)<G$. Conjugating if necessary, we may assume that
$x\in P$. Then $\bN_G(P)\leq H$, and hence $P\in\Syl_p(H)$ and
$\bN_H(P)=\bN_G(P)$.

By Conjecture~B applied to $x$, there is a bijection
$$
        \Irr^x(G)\longrightarrow \Irr^x(H)
$$
preserving $p$-parts of character degrees. By \cite[Corollary~4.20]{navmc}, irreducible characters of $p'$-degree do
not vanish on $p$-elements. Hence
$$
        |\Irr_{p'}(G)|=|\Irr_{p'}(H)|.
$$
By the minimality of $G$, the McKay conjecture holds for $(H,p)$, so
$$
        |\Irr_{p'}(H)|=|\Irr_{p'}(\bN_H(P))|
        =|\Irr_{p'}(\bN_G(P))|.
$$
Thus McKay holds for $(G,p)$, a contradiction. Therefore
$\Sub_G(x)=G$ for every nonidentity $p$-element $x\in G$.
\end{proof}

I believe that we are still missing the underlying reason behind these
local-global phenomena. Finding it, and truly understanding why statements of this
kind should be true, seems to me an exciting problem for the coming years. 

\appendix
\section*{Appendix: A correction to a lemma of Casolo}
\addcontentsline{toc}{section}{Appendix: A correction to a lemma of Casolo}

I record a small correction to the proof of \cite[Lemma~2]{cas}, since the
formula of Casolo used in Section~4 depends on it. I thank N. Rizo,
M. Solera and G. Souza for pointing out this issue.

Let $X$ be a finite meet-semilattice, let $g$ be an automorphism of $X$, and
let $x\in X$. As in \cite{cas}, write $X^g$ for the fixed point subposet and
${}^gX$ for the downward closure of $X^g$ in $X$. Then the posets
$$
        X^g_{>x}
        \qquad\text{and}\qquad
        {}^gX_{>x}
$$
are homotopy equivalent.

In the proof of \cite[Lemma~2]{cas}, it is claimed that $X^g_{>x}$ is a
meet-semilattice. This is false in general: for instance, if
$X=\mathcal P(\{1,2\})$, ordered by inclusion, $g=1$, and $x=\varnothing$,
then $X^g_{>x}$ contains $\{1\}$ and $\{2\}$ but not their meet
$\varnothing$.

The proof is nevertheless easily repaired. Let
$$
        i:X^g_{>x}\longrightarrow {}^gX_{>x}
$$
be the inclusion. If $X^g_{>x}$ is empty, then ${}^gX_{>x}$ is empty as well.
Otherwise, by Quillen's fibre criterion it is enough to prove that, for every
$y\in{}^gX_{>x}$, the fibre
$$
        y\backslash i=\{\,z\in X^g_{>x}\mid z\geq y\,\}
$$
is contractible. This fibre is nonempty by the definition of ${}^gX$.
Moreover, if $z_1,z_2\in y\backslash i$, then $z_1\wedge z_2$ is fixed by
$g$, since
$$
        g(z_1\wedge z_2)=g(z_1)\wedge g(z_2)=z_1\wedge z_2.
$$
and satisfies
$$
        z_1\wedge z_2\geq y>x.
$$
Hence $z_1\wedge z_2\in y\backslash i$. Thus the fibre is finite, nonempty,
and closed under meets. It therefore has a least element, namely the meet of
all its elements. Hence it is contractible. Quillen's fibre criterion gives
the desired homotopy equivalence.

{\bf Acknowledgements:}
I would like to begin by thanking Attila Mar\'oti and Juan Mart\'inez Madrid. Without our joint work in \cite{mmm}, this project would not have existed. I am grateful to A. Mar\'oti for his continued encouragement and support, and to J. Mart\'inez Madrid for many useful conversations on this project. 

I thank the Department of Mathematics of the University of the Basque Country for its hospitality. The ideas that led to this project arose there, during the conference celebrating the sixtieth birthday of Pavel Shumyatsky in June 2023.

I thank Noelia Rizo for the work in \cite{mr}. In that work, we proved a few cases of some of the conjectures considered here,
obtained related results, many of which have not been mentioned in this paper,
and considered possible block versions of these conjectures.

Special thanks are due to Gunter Malle for his interest in this project and for many helpful conversations. I thank Radha Kessar, Markus Linckelmann, Geoff Robinson, Damiano Rossi, Benjamin Sambale and Mandi Schaeffer Fry for interesting conversations.

I am grateful to the GAP developers. The computations made with GAP played an essential role in the development of this project, and I would not have come up with these conjectures without them. I am especially grateful to Thomas Breuer for proving Theorem~\ref{thm:breuer-sporadic} so quickly after I asked him about it.

I acknowledge the use of ChatGPT for assistance with editing and checking some arguments during the preparation of this manuscript.

\end{document}